\title{Rosser provability and the second incompleteness theorem}
\author{Taishi Kurahashi}
\date{}
\theoremstyle{plain}
\newtheorem{theorem}{Theorem}[section]
\newtheorem{lemma}[theorem]{Lemma}
\newtheorem{proposition}[theorem]{Proposition}
\newtheorem{corollary}[theorem]{Corollary}
\theoremstyle{definition}
\newtheorem{definition}[theorem]{Definition}
\newtheorem{remark}[theorem]{Remark}
\newcommand{\PA}{{\sf PA}}
\newcommand{\PR}{{\rm Pr}}
\newcommand{\PRR}{{\rm Pr}^R}
\newcommand{\Prf}{{\rm Prf}}
\newcommand{\Con}{{\sf Con}}
\newcommand{\N}{\mathbb{N}}
\newcommand{\gdl}[1]{\ulcorner#1\urcorner}
\newcommand{\HB}[1]{{\bf HB#1}}
\newcommand{\D}[1]{{\bf D#1}}
\newcommand{\DC}{{\bf \Delta_0 C}}
\newcommand{\SC}{{\bf \Sigma_1 C}}
\newcommand{\GC}{{\bf \Gamma C}}
\newcommand{\BD}[1]{{\bf B_{#1}}}
\newcommand{\DU}[1]{{\bf D#1^U}}
\newcommand{\DCU}{{\bf \Delta_0 C^U}}
\newcommand{\SCU}{{\bf \Sigma_1 C^U}}
\newcommand{\GCU}{{\bf \Gamma C^U}}
\newcommand{\CB}{{\bf CB}}
\newcommand{\BDU}[1]{{\bf B_{#1}^U}}
\newcommand{\DG}[1]{{\bf D#1^G}}
\newcommand{\DCG}{{\bf \Delta_0 C^G}}
\newcommand{\SCG}{{\bf \Sigma_1 C^G}}
\newcommand{\GCG}{{\bf \Gamma C^G}}
\begin{document}

\maketitle

\begin{abstract}
This paper is a continuation of Arai's paper on derivability conditions for Rosser provability predicates. 
We investigate the limitations of the second incompleteness theorem by constructing three different Rosser provability predicates satisfying several derivability conditions. 
\end{abstract}

\section{Introduction}\label{sec-intro}

G\"odel's second incompleteness theorem states that if $T$ is a recursively axiomatized consistent extension of Peano Arithmetic $\PA$, then $T$ cannot prove the consistency of $T$. 
%Of course, we should be careful about what is the consistency statement of $T$, and we are able to state the theorem more precisely. 
This statement of the theorem is somewhat ambiguous, and it should be stated more precisely. 
In 1939, the first detailed proof of the second incompleteness theorem appeared in their book \cite{HB39} by Hilbert and Bernays. 
They introduced the following conditions $\HB{1}$, $\HB{2}$ and $\HB{3}$ which are called the Hilbert-Bernays derivability conditions, and essentially proved that if $T$ is as above and a $\Sigma_1$ provability predicate $\PR_T(x)$ of $T$ satisfies the Hilbert-Bernays derivability conditions, then the consistency statement $\forall x (\PR_T(x) \to \neg \PR_T(\dot{\neg} x))$ of $T$ cannot be proved in $T$. 

\begin{description}
	\item [$\HB{1}$] : If $T \vdash \varphi \to \psi$, then $T \vdash \PR_T(\gdl{\varphi}) \to \PR_T(\gdl{\psi})$ for any formulas $\varphi, \psi$. 
	\item [$\HB{2}$] : $T \vdash \PR_T(\gdl{\neg \varphi(x)}) \to \PR_T(\gdl{\neg \varphi(\dot{x})})$ for any formula $\neg \varphi(x)$. 
	\item [$\HB{3}$] : $T \vdash f(x) = 0 \to \PR_T(\gdl{f(\dot{x}) = 0})$ for any primitive recursive term $f(x)$.
\end{description}

Moreover, Hilbert and Bernays proved that G\"odel's provability predicate $\PR_T(x)$ satisfies these conditions. 
In 1955, L\"ob \cite{Lob55} introduced the following conditions $\D{1}$, $\D{2}$ and $\D{3}$  which are called the Hilbert-Bernays-L\"ob derivability conditions, and proved that if $\PR_T(x)$ satisfies these conditions, then L\"ob's theorem holds. 

\begin{description}
	\item [$\D{1}$] : If $T \vdash \varphi$, then $T \vdash \PR_T(\gdl{\varphi})$ for any formula $\varphi$. 
	\item [$\D{2}$] : $T \vdash \PR_T(\gdl{\varphi \to \psi}) \to (\PR_T(\gdl{\varphi}) \to \PR_T(\gdl{\psi}))$ for any formulas $\varphi$, $\psi$. 
	\item [$\D{3}$] : $T \vdash \PR_T(\gdl{\varphi}) \to \PR_T(\gdl{\PR_T(\gdl{\varphi})})$ for any formula $\varphi$. 
\end{description}

L\"ob's theorem is known as a generalization of the second incompleteness theorem. 
Hence we obtain that if $\PR_T(x)$ satisfies the Hilbert-Bernays-L\"ob derivability conditions, then $T$ cannot prove the consistency statement $\neg \PR_T(\gdl{0 \neq 0})$ of $T$. 
This seems to be the most well-known form of the second incompleteness theorem stated accurately.

%Hence if we check that a given provability predicate $\PR_T(x)$ satisfies these conditions, then the second incompleteness theorem automatically holds for $\PR_T(x)$. 
%Feferman \cite{Fef60} investigated this machinery systematically. 
%G\"odel \cite{Goed31} actually constructed a provability predicate $\PR_T(x)$ satisfying these conditions, and then $\neg \PR_T(\gdl{0=1})$ cannot be proved in $T$. 
%Moreover, Feferman \cite{Fef60} showed that the second incompleteness theorem can be applied a wider class of provability predicates. 
%A systematic framework on this machinery was given by Feferman \cite{Fef60}. 
%A formula $\tau(v)$ is said to represent a theory $T$ in $T$ if for any sentence $\varphi$, $T \vdash \tau(\gdl{\varphi})$ if $\varphi \in T$, and $T \vdash \neg \tau(\gdl{\varphi})$ if $\varphi \notin T$.  
%For each formula $\tau(v)$ representing $T$ in $T$, we can naturally construct a formula $\PR_\tau(x)$ saying that ``$x$ is a formula provable in the set of all sentences satisfying $\tau(v)$''. 
%Then Feferman proved that if $\tau(v)$ is a $\Sigma_1$ formula representing $T$ in $T$, then $\D{1}$, $\D{2}$ and $\D{3}$ hold for $\PR_\tau(x)$, and thus the second incompleteness theorem automatically holds for $\PR_\tau(x)$. 
%
Other sets of derivability conditions which are sufficient for the second incompleteness theorem have been proposed by Jeroslow \cite{Jer73}, Montagna \cite{Mon79} and Buchholz \cite{Buc93} (see also \cite{Kur2}). 
On the other hand, the second incompleteness theorem does not hold for some provability predicates. 
Feferman \cite{Fef60} found a $\Pi_1$ formula $\tau(v)$ representing the set of all axioms of $T$ in $T$ such that the consistency statement $\forall x(\PR_\tau(x) \to \neg \PR_\tau(\dot{\neg} x))$ is provable in $\PA$ where $\PR_\tau(x)$ is the provability predicate of $T$ constructed from $\tau(v)$. 
Notice that Feferman's predicate satisfies $\D{1}$ and $\D{2}$, but does not satisfy $\D{3}$ because it is not $\Sigma_1$. 

An example of a $\Sigma_1$ provability predicate for which the second incompleteness theorem does not hold was given by Mostowski \cite{Mos65}. 
Let $\PR_T^M(x)$ be the $\Sigma_1$ formula $\exists y({\rm Prf}_T(x, y) \land \neg {\rm Prf}_T(\gdl{0 \neq 0}, y))$ where ${\rm Prf}_T(x, y)$ is a $\Delta_1(\PA)$ formula saying that ``$y$ is a $T$-proof of $x$''. 
Then $\neg \PR_T^M(\gdl{0 \neq 0})$ is trivially provable in $\PA$. 
Since the formula $\PR_T^M(x)$ satisfies $\D{1}$ and $\D{3}$, it does not satisfy $\D{2}$. 
Mostowski's example shows that for $\Sigma_1$ provability predicates $\PR_T(x)$, the set $\{\D{1}, \D{3}\}$ of derivability conditions is not sufficient for the unprovability of $\neg \PR_T(\gdl{0 \neq 0})$. 

Rosser provability predicates were introduced by Rosser \cite{Ros36} to improve G\"odel's first incompleteness theorem, and they are also examples of $\Sigma_1$ provability predicates for which the second incompleteness theorem does not hold (\cite{Kre60,Kre65,Kre71}). 
That is, $\PA \vdash \neg \PRR_T(\gdl{0 \neq 0})$ for any Rosser provability predicate $\PRR_T(x)$ of $T$. 
It follows that each Rosser provability predicate does not satisfy at least one of $\D{2}$ and $\D{3}$. 
It is known that whether each Rosser provability predicate satisfies $\D{2}$ (and $\D{3}$) or not depends on the choice of a Rosser predicate.  
Indeed, by using Kripke model theoretic method by Guaspari and Solovay \cite{GS79}, we obtain a Rosser provability predicate satisfying neither $\D{2}$ nor $\D{3}$. 
Also Bernardi and Montagna \cite{BM84} and Arai \cite{Ara90} proved the existence of Rosser predicates satisfying $\D{2}$, and Arai proved the the existence of Rosser predicates satisfying $\D{3}$. 

Moreover, it can be shown that the consistency statement $\forall x(\PRR_T(x) \to \neg \PRR_T(\dot{\neg}x))$ is provable for each Arai's Rosser provability predicate $\PRR_T(x)$. 
Then Arai's results indicate that for $\Sigma_1$ provability predicates $\PR_T(x)$, each of $\{\D{1}, \D{2}\}$ and $\{\D{1}, \D{3}\}$ is not sufficient for the unprovability of $\forall x(\PRR_T(x) \to \neg \PRR_T(\dot{\neg}x))$. 
Also these existence results show that $\{\D{1}, \D{2}\}$ and $\{\D{1}, \D{3}\}$ do not imply $\D{3}$ and $\D{2}$, respectively. 

The constructions of Rosser provability predicates are somewhat flexible, and thus actually, Rosser provability predicates satisfying several derivability conditions have also been investigated (\cite{KK17,Kur,Kur16}).
In this paper, we construct three Rosser provability predicates satisfying several additional derivability conditions. 
As a consequence of these constructions, we obtain that some sets of conditions of provability predicates are not sufficient for some versions of the second incompleteness theorem. 
In particular, our second and third Rosser provability predicates satisfy the Hilbert-Bernays derivability conditions. 
Therefore in contrast to the Hilbert-Bernays-L\"ob derivability conditions, the Hilbert-Bernays derivability condition does not imply the unprovability of the consistency statement $\neg \PR_T(\gdl{0 \neq 0})$ in general. 

In Section \ref{sec-dc}, we introduce versions of derivability conditions, and also introduce some basic results from the paper \cite{Kur2} concerning derivability conditions. 
In Section \ref{sec-rpp}, we introduce Rosser provability predicates, and describe background of the present paper. 
In the last section, we give constructions of our Rosser provability predicates.

\section{Provability predicates and derivability conditions}\label{sec-dc}

Throughout this paper, $T$ denotes a recursively axiomatized consistent extension of Peano arithmetic $\PA$ in the language of first-order arithmetic $\mathcal{L}_A$. 
%We assume that $\mathcal{L}_A$ contains function symbols corresponding to all primitive recursive functions, and $T$ contains their defining axioms. 
The numeral for each natural number $n$ is denoted by $\overline{n}$. 
We fix some natural G\"odel numbering, and let $\gdl{\varphi}$ be the numeral for the G\"odel number of a formula $\varphi$. 
We assume that $0$ is not a G\"odel number of any object. 
Let $\{\xi_k\}_{k \in \omega}$ be the effective reputation-free sequence of all $\mathcal{L}_A$-formulas arranged in ascending order of G\"odel numbers. 
We assume that if $\xi_k$ is a proper subformula of $\xi_l$, then $k < l$. 
%Notice that the sequence $\{\xi_k\}_{k \in \omega}$ is reputation-free. 

We say a $\Sigma_1$ formula $\PR_T(x)$ is a {\it provability predicate} of $T$ if it weakly represents the set of all theorems of $T$ in $T$, that is, for any natural number $n$, $T \vdash \PR_T(\overline{n})$ if and only if $n$ is the G\"odel number of some theorem of $T$. 
Provability predicates are expected to satisfy some natural conditions which are called derivability conditions. 
We introduce three versions of derivability conditions, that is, local version, uniform version and global version. 
See \cite{Kur2} for further details. 
In the following definitions, let $\Gamma$ be either $\Delta_0$ or $\Sigma_1$. 

\begin{definition}[Local derivability conditions]
\leavevmode
\begin{description}
	\item [$\D{1}$ :] If $T \vdash \varphi$, then $T \vdash \PR_T(\gdl{\varphi})$ for any formula $\varphi$. 
	\item [$\D{2}$ :] $T \vdash \PR_T(\gdl{\varphi \to \psi}) \to (\PR_T(\gdl{\varphi}) \to \PR_T(\gdl{\psi}))$ for any formulas $\varphi$, $\psi$. 
	\item [$\D{3}$ :] $T \vdash \PR_T(\gdl{\varphi}) \to \PR_T(\gdl{\PR_T(\gdl{\varphi})})$ for any formula $\varphi$. 
	\item [$\GC$ :] $T \vdash \varphi \to \PR_T(\gdl{\varphi})$ for any $\Gamma$ sentence $\varphi$. 
	\item [$\BD{2}$ :] If $T \vdash \varphi \to \psi$, then $T \vdash \PR_T(\gdl{\varphi}) \to \PR_T(\gdl{\psi})$ for any formulas $\varphi$, $\psi$. 
\end{description}
\end{definition}

\begin{definition}[Uniform derivability conditions]
\leavevmode
\begin{description}
	\item [$\DU{1}$ :] If $T \vdash \forall \vec{x} \varphi(\vec{x})$, then $T \vdash \forall \vec{x} \PR_T(\gdl{\varphi(\vec{\dot{x}})})$ for any formula $\varphi(\vec{x})$. 
	\item [$\DU{2}$ :] $T \vdash \forall \vec{x}(\PR_T(\gdl{\varphi(\vec{\dot{x}}) \to \psi(\vec{\dot{x}})}) \to (\PR_T(\gdl{\varphi(\vec{\dot{x}})}) \to \PR_T(\gdl{\psi(\vec{\dot{x}})})))$ for any formulas $\varphi(\vec{x})$, $\psi(\vec{x})$. 
	\item [$\DU{3}$ :] $T \vdash \forall \vec{x}(\PR_T(\gdl{\varphi(\vec{\dot{x}})}) \to \PR_T(\gdl{\PR_T(\gdl{\varphi(\vec{\dot{x}})})}))$ for any formula $\varphi(\vec{x})$. 
	\item [$\GCU$ :] $T \vdash \forall \vec{x}(\varphi(\vec{x}) \to \PR_T(\gdl{\varphi(\vec{\dot{x}})}))$ for any $\Gamma$ formula $\varphi(\vec{x})$. 
	\item [$\BDU{2}$ :] If $T \vdash \forall \vec{x}(\varphi(\vec{x}) \to \psi(\vec{x}))$, then $T \vdash \forall \vec{x} (\PR_T(\gdl{\varphi(\vec{\dot{x}})}) \to \PR_T(\gdl{\psi(\vec{\dot{x}})}))$ for any formulas $\varphi(\vec{x})$, $\psi(\vec{x})$. 
	\item [$\CB$ :] $T \vdash \PR_T(\gdl{\forall \vec{x}\varphi(\vec{x})}) \to \forall \vec{x}\PR_T(\gdl{\varphi(\vec{\dot{x}})})$ for any formula $\varphi(\vec{x})$. 
\end{description}
\end{definition}

\begin{definition}[Global derivability conditions]
\leavevmode
\begin{description}
	\item [$\DG{2}$ :] $T \vdash \forall x \forall y(\PR_T(x \dot{\to} y) \to (\PR_T(x) \to \PR_T(y)))$. 
	\item [$\DG{3}$ :] $T \vdash \forall x(\PR_T(x) \to \PR_T(\gdl{\PR_T(\dot{x})}))$. 
	\item [$\GCG$ :] $T \vdash \forall x({\sf True}_\Gamma(x) \to \PR_T(x))$. 
\end{description}
\end{definition}

Here $\gdl{\varphi(\vec{\dot{x}})}$ is an abbreviation for $\gdl{\varphi(\dot{x}_0, \ldots, \dot{x}_{k-1})}$ which is a primitive recursive term corresponding to a primitive recursive function calculating the G\"odel number of $\varphi(\overline{n_0}, \ldots, \overline{n_{k-1}})$ from $n_0, \ldots, n_{k-1}$.
Also $x \dot{\to} y$ is a primitive recursive term such that $\PA \vdash \gdl{\varphi} \dot{\to} \gdl{\psi} = \gdl{\varphi \to \psi}$ for all formulas $\varphi$ and $\psi$. 
Furthermore ${\sf True}_\Gamma(x)$ is a natural formula defining the truth of $\Gamma$ sentences (cf.~H\'ajek and Pudl\'ak \cite{HP93}). 

Notice that every provability predicate automatically satisfies \D{1}. 
Since our provability predicates are $\Sigma_1$, $\D{3}$ is a particular case of $\SC$. 
Also $\DU{3}$ and $\DG{3}$ are particular cases of $\SCU$. 
The condition $\CB$ claims the provability of sentences corresponding to the Converse Barcan Formula (see \cite{HC}). 
It is easy to prove the following implications (see \cite{Kur2}). 

\begin{proposition}\label{DCP}
\leavevmode
\begin{enumerate}
	\item $\DC$ and $\BD{2} \Rightarrow \D{1}$. 
	\item $\DCU$ and $\BD{2} \Rightarrow \DU{1}$. 
	\item $\D{1}$ and $\D{2} \Rightarrow \BD{2}$. 
	\item $\DU{1}$ and $\DU{2} \Rightarrow \BDU{2}$. 
	\item $\D{1}$ and $\CB \Rightarrow \DU{1}$. 
	\item $\BDU{2} \Rightarrow \CB$. 
\end{enumerate}
\end{proposition}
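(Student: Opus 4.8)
The plan is to derive all six implications by elementary syntactic manipulations inside $T$, using each derivability condition as a rewriting rule; no model theory is needed. I would organize the items into three pairs, each consisting of a ``local'' statement and its ``uniform'' counterpart, and treat the two members of a pair by the same argument.

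\emph{Necessitation from provable completeness, items (1) and (2).} For (1): if $T \vdash \varphi$, then $T \vdash (0=0) \to \varphi$, so $\BD{2}$ gives $T \vdash \PR_T(\gdl{0=0}) \to \PR_T(\gdl{\varphi})$; since $0 = 0$ is a true $\Delta_0$ sentence and $T \vdash 0 = 0$, the instance $T \vdash (0=0) \to \PR_T(\gdl{0=0})$ of $\DC$ yields $T \vdash \PR_T(\gdl{0=0})$, and modus ponens finishes. Item (2) is the same argument one level up: from $T \vdash \forall \vec{x}\, \varphi(\vec{x})$ we get $T \vdash \forall \vec{x}\, ((x_0 = x_0) \to \varphi(\vec{x}))$; feeding this into the uniform monotonicity condition $\BDU{2}$ gives $T \vdash \forall \vec{x}\, (\PR_T(\gdl{\dot{x}_0 = \dot{x}_0}) \to \PR_T(\gdl{\varphi(\vec{\dot{x}})}))$, and the antecedent is discharged by the instance $T \vdash \forall \vec{x}\, \PR_T(\gdl{\dot{x}_0 = \dot{x}_0})$ of $\DCU$ for the trivial $\Delta_0$ formula $x_0 = x_0$.

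\emph{The remaining four items are direct chases.} For (3): if $T \vdash \varphi \to \psi$, then $\D{1}$ gives $T \vdash \PR_T(\gdl{\varphi \to \psi})$, the relevant instance of $\D{2}$ gives $T \vdash \PR_T(\gdl{\varphi \to \psi}) \to (\PR_T(\gdl{\varphi}) \to \PR_T(\gdl{\psi}))$, and two modus ponens steps give the conclusion of $\BD{2}$. Item (4) is the same argument applied to the formula $\varphi(\vec{x}) \to \psi(\vec{x})$ under $\forall \vec{x}$, with $\DU{1}$ and $\DU{2}$ replacing $\D{1}$ and $\D{2}$ (noting that $\gdl{\varphi(\vec{\dot{x}}) \to \psi(\vec{\dot{x}})}$ is exactly the term occurring in $\DU{2}$). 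For (6): apply $\BDU{2}$ to the provable formula $(\forall \vec{y}\, \varphi(\vec{y})) \to \varphi(\vec{x})$, whose antecedent is a closed sentence; this gives $T \vdash \forall \vec{x}\, (\PR_T(\gdl{\forall \vec{y}\, \varphi(\vec{y})}) \to \PR_T(\gdl{\varphi(\vec{\dot{x}})}))$, and pulling the closed antecedent out of the quantifier (modulo a harmless renaming of bound variables) is $\CB$. For (5): from $T \vdash \forall \vec{x}\, \varphi(\vec{x})$, $\D{1}$ gives $T \vdash \PR_T(\gdl{\forall \vec{x}\, \varphi(\vec{x})})$, and $\CB$ then gives $T \vdash \forall \vec{x}\, \PR_T(\gdl{\varphi(\vec{\dot{x}})})$.

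The step I expect to be the main obstacle is item (2). Unlike (1), its conclusion must have the quantified variables $\vec{x}$ genuinely inside $\PR_T$, via the substitution term $\gdl{\varphi(\vec{\dot{x}})}$; but the local condition $\BD{2}$ produces only sentences in which the argument of $\PR_T$ is a fixed numeral (the code of an open formula), and $\DCU$ produces only $\PR_T$ applied to a substitution term $\gdl{\theta(\vec{\dot{x}})}$ with $\theta$ itself $\Delta_0$. So the argument above genuinely uses the uniform monotonicity $\BDU{2}$, and obtaining (2) from the weaker $\BD{2}$ as stated requires the extra observation that, in the presence of $\DCU$, the monotonicity step may be carried out parametrically in $\vec{x}$ --- this is the one point where I would slow down and check against the precise conventions for $\gdl{\varphi(\vec{\dot{x}})}$. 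Everything else amounts to tracking G\"odel numbers and closing off quantifiers, and those verifications are routine.
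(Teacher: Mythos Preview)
The paper does not give a proof of this proposition; it only says the implications are easy and refers to \cite{Kur2}. So there is nothing to compare against directly, and I evaluate your arguments on their own merits.

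Your arguments for items (1), (3), (4), (5), and (6) are correct and are exactly the standard ones. In (6), note that since $\forall \vec{y}\,\varphi(\vec{y})$ is closed, the term $\gdl{\alpha(\vec{\dot{x}})}$ produced by $\BDU{2}$ is provably equal to the constant numeral $\gdl{\forall \vec{y}\,\varphi(\vec{y})}$, so pulling it out of the universal quantifier is indeed harmless; you handle this correctly.

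Your self-diagnosis on item (2) is accurate, and this is a genuine gap, not a matter of convention. The argument you wrote invokes $\BDU{2}$, not $\BD{2}$: from $T \vdash \forall \vec{x}\,((x_0 = x_0) \to \varphi(\vec{x}))$, the local condition $\BD{2}$ only yields
\[
T \vdash \PR_T(\gdl{x_0 = x_0}) \to \PR_T(\gdl{\varphi(\vec{x})}),
\]
with \emph{fixed numerals} coding the open formulas, not the substitution terms $\gdl{\dot{x}_0 = \dot{x}_0}$ and $\gdl{\varphi(\vec{\dot{x}})}$ that you need. Your suggested fix---that $\DCU$ lets the monotonicity step ``be carried out parametrically in $\vec{x}$''---is not an argument; $\DCU$ only produces $\PR_T(\gdl{\theta(\vec{\dot{x}})})$ for $\Delta_0$ formulas $\theta$, and gives you no way to pass from $\PR_T(\gdl{\dot{x}_0=\dot{x}_0})$ to $\PR_T(\gdl{\varphi(\vec{\dot{x}})})$ for arbitrary $\varphi$. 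The substitution function $\vec{x} \mapsto \gdl{\varphi(\vec{\overline{x}})}$ is primitive recursive but not $\Delta_0$, so you cannot smuggle it in through $\DCU$ either. In short, the route you sketched does not close, and you should consult \cite{Kur2} for the intended argument for (2) rather than relying on the heuristic you give.
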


Here the first clause of Proposition \ref{DCP} means that for any $\Sigma_1$ formula $\PR_T(x)$, if $\PR_T(x)$ satisfies both $\DC$ and $\BD{2}$, then it also satisfies $\D{1}$. 

Moreover, the following nontrivial implication holds. 

\begin{theorem}[Kurahashi \cite{Kur2}]\label{TK}
$\D{1}$ and $\BDU{2} \Rightarrow \SCU$. 
\end{theorem}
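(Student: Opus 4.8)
The plan is to establish $\SCU$ by induction on the logical complexity of a $\Sigma_1$ formula $\varphi(\vec{x})$, using $\D{1}$ and $\BDU{2}$ at each step to push the provability predicate through the logical connectives. Before the induction proper, I would first record that $\D{1}$ and $\BDU{2}$ together yield $\DU{1}$: indeed, by Proposition~\ref{DCP}(6), $\BDU{2} \Rightarrow \CB$, and then by Proposition~\ref{DCP}(5), $\D{1}$ and $\CB$ give $\DU{1}$. I would also note that $\BDU{2}$ alone gives a "box-monotonicity" that respects $\forall$ and $\land$ in the obvious way: if $T \vdash \forall \vec{x}(\varphi(\vec{x}) \to \psi(\vec{x}))$ then $T \vdash \forall \vec{x}(\PR_T(\gdl{\varphi(\vec{\dot{x}})}) \to \PR_T(\gdl{\psi(\vec{\dot{x}})}))$, and since $\BDU{2}$ also entails the uniform version of $\BD{2}$-style reasoning in both directions, $T$ proves $\forall \vec{x}(\PR_T(\gdl{\varphi(\vec{\dot{x}}) \land \psi(\vec{\dot{x}})}) \leftrightarrow \PR_T(\gdl{\varphi(\vec{\dot{x}})}) \land \PR_T(\gdl{\psi(\vec{\dot{x}})}))$.

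For the induction, the base case handles atomic and negated atomic ($\Delta_0$, in fact quantifier-free) formulas; here one checks $T \vdash \forall \vec{x}(\varphi(\vec{x}) \to \PR_T(\gdl{\varphi(\vec{\dot{x}})}))$ for literals $s = t$ and $s \neq t$ by the standard fact that $\PA$ proves the provability predicate reflects correct evaluations of terms, which is available from $\D{1}$ applied to the (decidable, hence provable or refutable) closed instances together with $\DU{1}$ to uniformize. For the inductive step I treat the cases $\land$, $\lor$, bounded quantifiers $\exists y \le t$ and $\forall y \le t$, and unbounded $\exists y$. The conjunction case is immediate from the box-distribution over $\land$ recorded above. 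The disjunction case uses that $T \vdash \varphi \to \varphi \lor \psi$ and $T \vdash \psi \to \varphi \lor \psi$, so $\BDU{2}$ gives $T \vdash \forall \vec{x}(\PR_T(\gdl{\varphi(\vec{\dot{x}})}) \to \PR_T(\gdl{\varphi(\vec{\dot{x}}) \lor \psi(\vec{\dot{x}})}))$ and symmetrically, and then the two induction hypotheses combine. The bounded quantifiers are handled by reducing $\exists y \le t\, \varphi$ provably-in-$\PA$ to a specific disjunction once the value of $t$ is known, again uniformized via $\DU{1}$; equivalently one can absorb bounded quantifiers into the $\Delta_0$ base case by a separate sub-induction on bounded formulas.

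The main obstacle is the unbounded existential step: given $T \vdash \forall \vec{x}\,\forall y\,(\varphi(\vec{x}, y) \to \PR_T(\gdl{\varphi(\vec{\dot{x}}, \dot{y})}))$, I must derive $T \vdash \forall \vec{x}\,(\exists y\, \varphi(\vec{x}, y) \to \PR_T(\gdl{\exists y\, \varphi(\vec{\dot{x}}, y)}))$. The point is that $T \vdash \forall \vec{x}\,\forall y\,(\varphi(\vec{x}, y) \to \exists z\, \varphi(\vec{x}, z))$, so $\BDU{2}$ yields $T \vdash \forall \vec{x}\,\forall y\,(\PR_T(\gdl{\varphi(\vec{\dot{x}}, \dot{y})}) \to \PR_T(\gdl{\exists z\, \varphi(\vec{\dot{x}}, z)}))$ — here it is essential that the outer term $\gdl{\exists z\, \varphi(\vec{\dot{x}}, z)}$ depends only on $\vec{x}$ and not on $y$, which is exactly what the uniform formulation $\BDU{2}$ provides (a non-uniform $\BD{2}$ would not suffice). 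Chaining this with the induction hypothesis gives $T \vdash \forall \vec{x}\,\forall y\,(\varphi(\vec{x}, y) \to \PR_T(\gdl{\exists z\, \varphi(\vec{\dot{x}}, z)}))$, and since $y$ no longer occurs in the consequent, $\exists$-introduction inside $T$ delivers $T \vdash \forall \vec{x}\,(\exists y\, \varphi(\vec{x}, y) \to \PR_T(\gdl{\exists z\, \varphi(\vec{\dot{x}}, z)}))$, which is the desired instance of $\SCU$. I expect the bookkeeping with the substitution terms $\gdl{\varphi(\vec{\dot{x}})}$ and the provable commutation of $\sub$ with the connectives to be the only genuinely fiddly part; all of it is routine $\PA$-formalization of syntactic operations.
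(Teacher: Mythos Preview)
The paper does not actually contain a proof of Theorem~\ref{TK}; the result is merely quoted from \cite{Kur2}. So there is no in-paper argument to compare against, and your proposal must be assessed on its own.

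Your overall strategy is the standard one and is correct. In particular, you have identified the genuine crux precisely: the unbounded existential step works under $\BDU{2}$ (and would fail under the merely local $\BD{2}$) exactly because the formula inside the box in the conclusion, $\exists z\,\varphi(\vec{\dot{x}},z)$, does not depend on the witness variable $y$, so $\BDU{2}$ applied to $T \vdash \forall \vec{x}\,\forall y\,(\varphi(\vec{x},y) \to \exists z\,\varphi(\vec{x},z))$ yields a conclusion in which $y$ can be existentially discharged externally. That is the heart of the theorem, and your account of it is right.

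Where your write-up is thin is the base case. The phrase ``$\D{1}$ applied to the (decidable) closed instances together with $\DU{1}$ to uniformize'' does not quite do the job: $\D{1}$ on closed instances is non-uniform, and $\DU{1}$ alone does not convert a schema of provable closed instances into a single provable universal. The clean route is first to prove a term-evaluation lemma by induction on terms, namely
\[
T \vdash \forall \vec{x}\,\forall y\,\bigl(t(\vec{x}) = y \to \PR_T(\gdl{t(\vec{\dot{x}}) = \dot{y}})\bigr),
\]
using $\DU{1}$ (for the reflexivity seed $T \vdash \forall y\,\PR_T(\gdl{\dot{y} = \dot{y}})$ and the defining equations of $S,+,\cdot$) together with $\BDU{2}$ at the composite steps. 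The positive atomic case $s(\vec{x}) = t(\vec{x})$ then follows immediately. The negated atomic case $s(\vec{x}) \neq t(\vec{x})$, however, is not handled by ``decidable closed instances plus $\DU{1}$'': one needs an honest internal induction (for example, reducing to $x < y$ and proving $T \vdash \forall x\,\forall y\,(x < y \to \PR_T(\gdl{\dot{x} < \dot{y}}))$ by induction on $y$, using $\DU{1}$ for the base and $\BDU{2}$ for the successor step). This is routine but should be spelled out rather than absorbed into ``the standard fact''. Once that is in place, the bounded-quantifier cases also go through by internal induction on the bound, and the rest of your outline is fine.
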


By Proposition \ref{DCP}.4, we immediately obtain the following corollary which is due to Buchholz (see also \cite{Rau10}). 

\begin{corollary}[Buchholz \cite{Buc93}]\label{BucT}
$\DU{1}$ and $\DU{2} \Rightarrow \SCU$. 
\end{corollary}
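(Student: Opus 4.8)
The plan is simply to chain together the two results immediately preceding the corollary. First I would note that $\DU{1}$ trivially entails the local condition $\D{1}$: taking $\vec{x}$ to be the empty tuple of variables, the uniform statement ``if $T \vdash \forall \vec{x}\,\varphi(\vec{x})$ then $T \vdash \forall \vec{x}\,\PR_T(\gdl{\varphi(\vec{\dot{x}})})$'' reduces verbatim to ``if $T \vdash \varphi$ then $T \vdash \PR_T(\gdl{\varphi})$''. So under the hypotheses $\DU{1}$ and $\DU{2}$ we have in particular $\D{1}$.

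Next I would invoke Proposition \ref{DCP}.4, which gives that $\DU{1}$ together with $\DU{2}$ implies $\BDU{2}$. Hence from the hypotheses of the corollary we obtain simultaneously $\D{1}$ and $\BDU{2}$. Finally, applying Theorem \ref{TK} — which asserts $\D{1}$ and $\BDU{2} \Rightarrow \SCU$ — to these two conclusions yields $\SCU$, as desired.

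I do not expect any genuine obstacle here: the whole substance of the corollary is already contained in Theorem \ref{TK}, whose proof (carried out in \cite{Kur2}) is the nontrivial ingredient, and in Proposition \ref{DCP}.4. The only point requiring a moment's care is the passage $\DU{1} \Rightarrow \D{1}$, and more generally the observation that each uniform derivability condition specializes to the corresponding local one when the parameter list is empty; given the definitions in Section \ref{sec-dc} this is immediate.
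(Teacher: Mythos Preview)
Your proposal is correct and matches the paper's own argument: the paper simply says that the corollary follows ``by Proposition \ref{DCP}.4'', i.e., one deduces $\BDU{2}$ from $\DU{1}+\DU{2}$ and then invokes Theorem \ref{TK}. You merely spell out the (trivial) step $\DU{1}\Rightarrow\D{1}$ that the paper leaves implicit.
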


We introduce several different consistency statements based on the provability predicate $\PR_T(x)$. 

\begin{definition}\leavevmode
\begin{itemize}
	\item $\Con_{\PR_T}^H : \equiv \forall x (\PR_T(x) \to \neg \PR_T(\dot{\neg} x))$. 
	\item $\Con_{\PR_T}^L : \equiv \neg \PR_T(\gdl{0 \neq 0})$. 
	\item For each formula $\varphi$, $\Con_{\PR_T}(\varphi) : \equiv (\PR_T(\gdl{\varphi}) \to \neg \PR_T(\gdl{\neg \varphi}))$. 
	\item $\Con_{\PR_T}^S : = \{\Con_{\PR_T}(\varphi) : \varphi$ is a formula$\}$.\footnote{Introducing this schematic consistency statement $\Con_{\PR_T}^S$ was proposed by the referee.}
\end{itemize}
\end{definition}

Here $\dot{\neg} x$ is a primitive recursive term satisfying $\PA \vdash \dot{\neg} \gdl{\varphi} = \gdl{\neg \varphi}$ for any formula $\varphi$. 
Then for every formula $\varphi$, $\Con_{\PR_T}(\varphi)$ follows from $\Con_{\PR_T}^H$. 
Also $\Con_{\PR_T}^L$ follows from $\Con_{\PR_T}(0=0)$ because $\PR_T(x)$ satisfies $\D{1}$.  
Therefore we have that $\Con_{\PR_T}^H$ is stronger than $\Con_{\PR_T}^S$, and $\Con_{\PR_T}^S$ is stronger than $\Con_{\PR_T}^L$. 
In general, the converse implications do not hold. 

Hilbert and Bernays introduced the conditions $\HB{1}$, $\HB{2}$ and $\HB{3}$ described in the introduction which are sufficient for unprovability of $\Con_{\PR_T}^H$. 
In our context, each of their conditions correponds to $\BD{2}$, $\CB$ and $\DCU$, respectively. 
Then we call the conditions $\BD{2}$, $\CB$ and $\DCU$ the {\it Hilbert-Bernays derivability conditions}. 
Their result can be stated as follows (see \cite{Kur2}). 

\begin{theorem}[Hilbert and Bernays \cite{HB39}]\label{HB}
If $\PR_T(x)$ satisfies $\BD{2}$, $\CB$ and $\DCU$, then $T \nvdash \Con_{\PR_T}^H$. 
\end{theorem}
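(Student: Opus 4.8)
The plan is to run a version of the classical G\"odel--Hilbert--Bernays argument, set up so that the eventual conflict of provabilities takes place at the level of a single $\Delta_0$ sentence and hence directly contradicts one instance of $\Con_{\PR_T}^H$. By the diagonal lemma applied to $\neg \PR_T(x)$, fix a sentence $G$ with $T \vdash G \leftrightarrow \neg \PR_T(\gdl{G})$. Since $\PR_T(x)$ is $\Sigma_1$, write it as $\exists y\, \delta(x,y)$ with $\delta$ a $\Delta_0$ formula; then the diagonal construction can be arranged so that $G$ is literally of the form $\forall y\, \theta(y)$, where $\theta(y)$ is the $\Delta_0$ formula $\neg \delta(t,y)$ for a closed term $t$ with $\PA \vdash t = \gdl{G}$ (so $\neg\theta$ is also $\Delta_0$). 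I would then establish the two facts $T \vdash \Con_{\PR_T}^H \to G$ and $T \nvdash G$. Together these finish the proof: if $T \vdash \Con_{\PR_T}^H$, then $T \vdash G$, contradicting the second fact, so $T \nvdash \Con_{\PR_T}^H$.

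The fact $T \nvdash G$ is routine. Here I would use that $\PR_T(x)$ satisfies $\D{1}$ (automatically, since it is a provability predicate; alternatively from $\DCU$, which implies $\DC$, together with $\BD{2}$, by Proposition~\ref{DCP}.1). If $T \vdash G$, then $T \vdash \PR_T(\gdl{G})$ by $\D{1}$, whereas $T \vdash G \to \neg \PR_T(\gdl{G})$ gives $T \vdash \neg \PR_T(\gdl{G})$, contradicting the consistency of $T$.

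For the main fact, I would show $T \vdash \Con_{\PR_T}^H \to \neg \PR_T(\gdl{G})$, which is equivalent to $T \vdash \Con_{\PR_T}^H \to G$ by the choice of $G$. Reason inside $T$, assuming both $\Con_{\PR_T}^H$ and $\PR_T(\gdl{G})$, and derive a contradiction. From $\PR_T(\gdl{G})$, that is $\PR_T(\gdl{\forall y\, \theta(y)})$, the condition $\CB$ yields $\forall y\, \PR_T(\gdl{\theta(\dot y)})$. On the other hand, $T \vdash G \leftrightarrow \neg \PR_T(\gdl{G})$ and the assumption $\PR_T(\gdl{G})$ give $\neg G$, that is $\exists y\, \neg\theta(y)$; fix a witness $y_0$ with $\neg\theta(y_0)$. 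Since $\neg\theta$ is $\Delta_0$, the condition $\DCU$ gives $\forall y(\neg\theta(y) \to \PR_T(\gdl{\neg\theta(\dot y)}))$, hence $\PR_T(\gdl{\neg\theta(\dot y_0)})$; and instantiating $\forall y\, \PR_T(\gdl{\theta(\dot y)})$ at $y_0$ gives $\PR_T(\gdl{\theta(\dot y_0)})$. Finally, the instance of $\Con_{\PR_T}^H$ at $x = \gdl{\theta(\dot y_0)}$ reads $\PR_T(\gdl{\theta(\dot y_0)}) \to \neg \PR_T(\dot{\neg}\gdl{\theta(\dot y_0)})$, and since $\PA \vdash \dot{\neg}\gdl{\theta(\dot y_0)} = \gdl{\neg\theta(\dot y_0)}$, this contradicts the two provability statements just derived. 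Hence $T + \Con_{\PR_T}^H \vdash \neg \PR_T(\gdl{G})$, and therefore $T + \Con_{\PR_T}^H \vdash G$.

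The step I expect to be the real point --- and where the difference from the Hilbert--Bernays--L\"ob conditions shows up --- is that we have neither $\D{3}$ nor provable $\Sigma_1$-completeness available, so we cannot simply ``box'' the $\Sigma_1$ sentence $\PR_T(\gdl{G})$. The way around this is to insist that the fixed point $G$ is genuinely $\Pi_1$: then $\CB$ distributes provability through its outer universal quantifier, $\DCU$ supplies provability of the negated $\Delta_0$ matrix at the single witness $y_0$, and the resulting inconsistency is between $\PR_T$ of $\theta(\overline{y_0})$ and of its negation, which is precisely the pattern forbidden by one instance of $\Con_{\PR_T}^H$. The remaining points --- that the diagonal construction can be made to deliver a literally $\Pi_1$ fixed point, and the standard $\PA$-provable identities for $\dot{\neg}$ and for the substitution term $\gdl{\theta(\dot y)}$ --- are routine.
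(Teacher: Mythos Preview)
Your proposal is correct and is precisely the classical Hilbert--Bernays argument. Note that the paper does not actually supply its own proof of Theorem~\ref{HB}: the result is stated with attribution to \cite{HB39} and a pointer to \cite{Kur2} for details, so there is nothing in the paper to compare your argument against directly. Your line---arranging the G\"odel sentence to be literally $\Pi_1$, using $\CB$ to push provability through its outermost universal quantifier, using $\DCU$ to box the true $\Delta_0$ witness $\neg\theta(y_0)$, and then appealing to a single instance of $\Con_{\PR_T}^H$---is exactly the intended route and is sound.

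One remark worth making explicit: as you yourself half-observe, your argument does not really use $\BD{2}$. You invoke it only as an alternative route to $\D{1}$, but the paper already notes that every provability predicate satisfies $\D{1}$ automatically. The steps $\PR_T(\gdl{G}) \Rightarrow \forall y\,\PR_T(\gdl{\theta(\dot y)})$ and $\neg\theta(y_0) \Rightarrow \PR_T(\gdl{\neg\theta(\dot y_0)})$ use only $\CB$ and $\DCU$, and the passage from $\PR_T(\gdl{G})$ to $\neg G$ uses only the fixed-point equivalence. So your proof in fact yields the marginally sharper conclusion that $\CB$ and $\DCU$ alone (for a $\Sigma_1$ provability predicate) already give $T \nvdash \Con_{\PR_T}^H$. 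The condition $\BD{2}$ appears in the statement because the original Hilbert--Bernays derivation passes through a step of the shape $\PR_T(\gdl{G}) \to \PR_T(\gdl{\neg G})$ via $\HB{1}$; your version avoids this by working directly at the level of the $\Delta_0$ matrix, which is a clean simplification.
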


The following theorem is a well-known form of unprovability of consistency which is essentially due to L\"ob. 
The conditions $\D{1}$, $\D{2}$ and $\D{3}$ are called the {\it Hilbert-Bernays-L\"ob derivability conditions}. 

\begin{theorem}[L\"ob \cite{Lob55}]\label{Lob}
If $\PR_T(x)$ satisfies $\D{1}$, $\D{2}$ and $\D{3}$, then $T \nvdash \Con_{\PR_T}^L$. 
\end{theorem}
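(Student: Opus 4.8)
The plan is to derive the statement from the generalized form of L\"ob's theorem: for every sentence $\varphi$, if $T \vdash \PR_T(\gdl{\varphi}) \to \varphi$, then $T \vdash \varphi$. Granting this, suppose toward a contradiction that $T \vdash \Con_{\PR_T}^L$, that is, $T \vdash \neg \PR_T(\gdl{0 \neq 0})$. Since $0 \neq 0$ is refutable in $\PA$, the sentence $\neg \PR_T(\gdl{0 \neq 0})$ is provably in $T$ equivalent to $\PR_T(\gdl{0 \neq 0}) \to 0 \neq 0$; hence $T \vdash \PR_T(\gdl{0 \neq 0}) \to 0 \neq 0$. Applying the generalized L\"ob theorem with $\varphi :\equiv 0 \neq 0$ yields $T \vdash 0 \neq 0$, contradicting the consistency of $T$.

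To prove the generalized statement, assume $T \vdash \PR_T(\gdl{\varphi}) \to \varphi$ and use the fixed point lemma (available since $T$ extends $\PA$) to fix a sentence $\sigma$ with
\[ T \vdash \sigma \leftrightarrow \bigl( \PR_T(\gdl{\sigma}) \to \varphi \bigr). \]
First, from $T \vdash \sigma \to (\PR_T(\gdl{\sigma}) \to \varphi)$, the condition $\D{1}$ gives $T \vdash \PR_T(\gdl{\sigma \to (\PR_T(\gdl{\sigma}) \to \varphi)})$. Two successive applications of $\D{2}$ then yield $T \vdash \PR_T(\gdl{\sigma}) \to \bigl( \PR_T(\gdl{\PR_T(\gdl{\sigma})}) \to \PR_T(\gdl{\varphi}) \bigr)$. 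Next, $\D{3}$ applied to $\sigma$ gives $T \vdash \PR_T(\gdl{\sigma}) \to \PR_T(\gdl{\PR_T(\gdl{\sigma})})$, and combining the last two facts produces $T \vdash \PR_T(\gdl{\sigma}) \to \PR_T(\gdl{\varphi})$. Using the hypothesis $T \vdash \PR_T(\gdl{\varphi}) \to \varphi$ we obtain $T \vdash \PR_T(\gdl{\sigma}) \to \varphi$, which by the fixed point equivalence means $T \vdash \sigma$. Finally, $\D{1}$ gives $T \vdash \PR_T(\gdl{\sigma})$, and modus ponens yields $T \vdash \varphi$, as desired.

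I do not expect a genuine obstacle here: this is the classical argument, and all of the ingredients --- the fixed point lemma and the three conditions $\D{1}$, $\D{2}$, $\D{3}$ --- are either standard or hypotheses. The only points deserving a little care are the bookkeeping in the double use of $\D{2}$ (keeping track of which occurrences of $\PR_T$ are being peeled off at each step) and the elementary observation that, because $0 \neq 0$ implies everything in $T$, the formulas $\neg \PR_T(\gdl{0 \neq 0})$ and $\PR_T(\gdl{0 \neq 0}) \to 0 \neq 0$ are interderivable in $T$, which is precisely what allows the generalized theorem to specialize to the consistency statement $\Con_{\PR_T}^L$.
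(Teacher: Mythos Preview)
Your argument is correct: it is the classical proof of L\"ob's theorem via a L\"ob sentence $\sigma \leftrightarrow (\PR_T(\gdl{\sigma}) \to \varphi)$, followed by the specialization $\varphi \equiv 0 \neq 0$. All steps are sound, including the double application of $\D{2}$ and the reduction of $\Con_{\PR_T}^L$ to the hypothesis of the general form.

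The paper does not give a self-contained proof of this theorem; it cites L\"ob and then, after proving Theorem~\ref{KurG2}, remarks that Theorem~\ref{Lob} follows from it. That route is genuinely different from yours: the paper takes a G\"odel sentence $\varphi \leftrightarrow \neg \PR_T(\gdl{\varphi})$, shows under $\D{1}$, $\BD{2}$, $\D{3}$ that $T \vdash \Con_{\PR_T}(\varphi) \to \varphi$ and hence $T \nvdash \Con_{\PR_T}(\varphi)$, and then observes that under $\D{2}$ the schematic statement $\Con_{\PR_T}^S$ follows from $\Con_{\PR_T}^L$, so $T \nvdash \Con_{\PR_T}^L$. Your approach is the more direct and standard one and yields the full L\"ob theorem as a by-product; the paper's detour through Theorem~\ref{KurG2} has the advantage of isolating a result under the weaker hypothesis $\BD{2}$ in place of $\D{2}$, which is exactly what is needed later to separate the various consistency statements.
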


By Proposition \ref{DCP}.3, $\{\D{1}, \D{2}\}$ implies $\{\D{1}, \BD{2}\}$. 
In \cite{Kur2}, it is proved that if $\PR_T(x)$ satisfies $\D{1}$, $\BD{2}$ and $\D{3}$, then $T \nvdash \Con_{\PR_T}^H$. 
This statement can be strengthened as follows. 

\begin{theorem}\label{KurG2}
If $\PR_T(x)$ satisfies $\D{1}$, $\BD{2}$ and $\D{3}$, then $T \nvdash \Con_{\PR_T}^S$.\footnote{This means that $T \nvdash \Con_{\PR_T}(\varphi)$ for some formula $\varphi$.} 
\end{theorem}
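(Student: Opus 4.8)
The plan is to produce a single formula $\varphi$ witnessing $T \nvdash \Con_{\PR_T}(\varphi)$; by the reading of $\Con_{\PR_T}^S$ in the footnote this already gives $T \nvdash \Con_{\PR_T}^S$. The natural candidate is the G\"odel sentence itself. By the diagonal lemma, fix a sentence $G$ with $T \vdash G \leftrightarrow \neg\PR_T(\gdl{G})$, and the claim to establish is $T \nvdash \Con_{\PR_T}(G)$, i.e.\ $T \nvdash \bigl(\PR_T(\gdl{G}) \to \neg\PR_T(\gdl{\neg G})\bigr)$. In other words, I want to run the known argument showing that $\D{1}$, $\BD{2}$ and $\D{3}$ refute $\Con_{\PR_T}^H$, but localize it to the particular instance of the consistency schema attached to $G$.

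First I would record the usual first-incompleteness observation $T \nvdash G$: if $T \vdash G$, then $\D{1}$ yields $T \vdash \PR_T(\gdl{G})$, while the fixed point yields $T \vdash \neg\PR_T(\gdl{G})$, contradicting the consistency of $T$. This step uses only that $\PR_T(x)$ is a provability predicate (hence $\D{1}$) together with $\Con(T)$.

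Next I would argue by contradiction, assuming $T \vdash \PR_T(\gdl{G}) \to \neg\PR_T(\gdl{\neg G})$. The heart of the proof is to deduce, from the derivability conditions alone, that $T \vdash \PR_T(\gdl{G}) \to \PR_T(\gdl{\neg G})$. From the fixed point we have $T \vdash \PR_T(\gdl{G}) \to \neg G$; applying $\BD{2}$ to this implication gives $T \vdash \PR_T(\gdl{\PR_T(\gdl{G})}) \to \PR_T(\gdl{\neg G})$. Composing this with the instance $T \vdash \PR_T(\gdl{G}) \to \PR_T(\gdl{\PR_T(\gdl{G})})$ of $\D{3}$ yields exactly $T \vdash \PR_T(\gdl{G}) \to \PR_T(\gdl{\neg G})$. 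Combining this with the assumed $T \vdash \PR_T(\gdl{G}) \to \neg\PR_T(\gdl{\neg G})$ forces $T \vdash \neg\PR_T(\gdl{G})$, hence $T \vdash G$ by the fixed point, contradicting $T \nvdash G$. Therefore $T \nvdash \Con_{\PR_T}(G)$, as desired.

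No step here is genuinely delicate; the only point to be careful about is the bookkeeping of G\"odel numbers, in particular that $\PR_T(\gdl{G})$ is a bona fide sentence so that $\BD{2}$ legitimately applies to the implication $\PR_T(\gdl{G}) \to \neg G$. The conceptual content is simply the observation that $\BD{2}$ (rather than the full $\D{2}$) already suffices to internalize $\PR_T(\gdl{G}) \to \neg G$ into $\PR_T(\gdl{\PR_T(\gdl{G})}) \to \PR_T(\gdl{\neg G})$, and that the resulting refutation of consistency only ever speaks about $G$ and $\neg G$, so it lives inside a single instance $\Con_{\PR_T}(G)$ of the schema $\Con_{\PR_T}^S$.
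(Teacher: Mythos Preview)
Your proof is correct and follows essentially the same approach as the paper: take the G\"odel sentence $G$, use $\BD{2}$ applied to $\PR_T(\gdl{G}) \to \neg G$ together with $\D{3}$ to obtain $T \vdash \PR_T(\gdl{G}) \to \PR_T(\gdl{\neg G})$, hence $T \vdash \Con_{\PR_T}(G) \to G$, and conclude from $T \nvdash G$. The paper compresses the middle steps by citing the implication $T \vdash \Con_{\PR_T}(\varphi) \to \varphi$ from \cite{Kur2}, but your unpacking of that implication is exactly the intended argument.
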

\begin{proof}
Suppose $\PR_T(x)$ satisfies $\D1$, $\BD{2}$ and $\D{3}$. 
Let $\varphi$ be any sentence satisfying $T \vdash \varphi \leftrightarrow \neg \PR_T(\gdl{\varphi})$. 
Then we have $T \vdash \Con_{\PR_T}(\varphi) \to \varphi$ as in \cite{Kur2}. 
Since $T \nvdash \varphi$, $T \nvdash \Con_{\PR_T}(\varphi)$. 
Thus $T \nvdash \Con_{\PR_T}^S$. 
\end{proof}

Notice that if $\PR_T(x)$ satisfies $\D{2}$, then $\Con_{\PR_T}^S$ follows from $\Con_{\PR_T}^L$. 
Hence Theorem \ref{Lob} is also a consequence of Theorem \ref{KurG2}. 

In his proof of the incompleteness theorems, G\"odel constructed a $\Delta_1(\PA)$ formula ${\sf Proof}_T(x, y)$ saying that $y$ is the G\"odel number of a $T$-proof of a formula with the G\"odel number $x$. 
G\"odel's provability predicate ${\sf Prov}_T(x)$ is defined as $\exists y {\sf Proof}_T(x, y)$. 
Then the formula ${\sf Prov}_T(x)$ is a $\Sigma_1$ provability predicate satisfying full derivability conditions $\DU{1}$, $\DG{2}$ and $\SCG$. 
Thus the sentence $\Con_{{\sf Prov}_T}^L$ is not provable in $T$ by Theorem \ref{Lob}. 
This is G\"odel's second incompleteness theorem. 
It is known that $\Con_{{\sf Prov}_T}^H$ and $\Con_{{\sf Prov}_T}^L$ are provably equivalent in $\PA$ (see \cite{Kur2}), and so let $\Con_T$ denote one of these consistency statements.  

\begin{theorem}[The second incompleteness theorem (G\"odel \cite{Goed31})]\label{G2}
The consistency statement $\Con_T$ of $T$ cannot be proved in $T$. 
\end{theorem}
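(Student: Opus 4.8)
The plan is simply to instantiate L\"ob's theorem (Theorem~\ref{Lob}) at G\"odel's standard provability predicate ${\sf Prov}_T(x)$. Since $\Con_T$ is by definition one of the two $\PA$-provably equivalent sentences $\Con_{{\sf Prov}_T}^H$ and $\Con_{{\sf Prov}_T}^L$, and $\PA \subseteq T$, it suffices to show that $T \nvdash \Con_{{\sf Prov}_T}^L$, that is, that $T \nvdash \neg {\sf Prov}_T(\gdl{0 \neq 0})$; the case of $\Con_{{\sf Prov}_T}^H$ then follows because the two sentences are equivalent over $T$.

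First I would check that ${\sf Prov}_T(x)$ satisfies the Hilbert--Bernays--L\"ob conditions $\D{1}$, $\D{2}$, $\D{3}$. Each follows from the corresponding full condition already recorded above: $\D{1}$ is the instance of $\DU{1}$ with the empty tuple of variables; $\D{2}$ is obtained from $\DG{2}$ by instantiating the universal quantifiers at $\gdl{\varphi}$ and $\gdl{\psi}$ and rewriting $\gdl{\varphi} \dot{\to} \gdl{\psi}$ as $\gdl{\varphi \to \psi}$; and $\D{3}$ is a particular case of $\SC$ (hence of $\SCG$) because ${\sf Prov}_T(\gdl{\varphi})$ is a $\Sigma_1$ sentence. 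Applying Theorem~\ref{Lob} then yields $T \nvdash \Con_{{\sf Prov}_T}^L$, and therefore $T \nvdash \Con_T$.

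The only real work lies inside the facts being taken for granted, namely that ${\sf Prov}_T(x)$ satisfies $\DU{1}$, $\DG{2}$ and $\SCG$: this is the classical arithmetization of syntax inside $\PA$ (formalizing concatenation of proofs, closure under modus ponens, and provable $\Sigma_1$-completeness). This is the expected main obstacle, but it is entirely standard and is asserted in the paragraph preceding the theorem, so the proof of Theorem~\ref{G2} itself reduces to the short deduction above. One should also recall that the global hypothesis that $T$ is consistent is essential, since it is what makes Theorem~\ref{Lob} applicable via our standing assumptions on $T$.
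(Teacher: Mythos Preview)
Your proposal is correct and mirrors the paper's own argument: the paper does not give a separate proof of Theorem~\ref{G2} but derives it in the paragraph immediately preceding the statement, by noting that ${\sf Prov}_T(x)$ satisfies $\DU{1}$, $\DG{2}$ and $\SCG$ (hence $\D{1}$, $\D{2}$, $\D{3}$), applying Theorem~\ref{Lob} to get $T \nvdash \Con_{{\sf Prov}_T}^L$, and using the $\PA$-equivalence of $\Con_{{\sf Prov}_T}^L$ and $\Con_{{\sf Prov}_T}^H$. Your derivation of the local conditions from the stronger global/uniform ones is exactly the intended reduction.
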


\section{Rosser provability predicates}\label{sec-rpp}

In this section, we introduce Rosser provability predicates and survey on derivability conditions for Rosser provability predicates. 
We say a formula $\Prf_T(x, y)$ is a {\it proof predicate} of $T$ if it satisfies the following conditions: 
\begin{enumerate}
	\item $\Prf(x, y)$ is $\Delta_1(\PA)$; 
	\item $\PA \vdash \forall x({\sf Prov}_T(x) \leftrightarrow \exists y \Prf_T(x, y))$; 
	\item for any natural number $n$ and formula $\varphi$, $\N \models {\sf Proof}_T(\gdl{\varphi}, \overline{n}) \leftrightarrow \Prf_T(\gdl{\varphi}, \overline{n})$; 
	\item $\PA \vdash \forall x \forall x' \forall y(\Prf_T(x, y) \land \Prf_T(x', y) \to x = x')$. 
\end{enumerate}

The last clause means that our proof predicates are single-conclusion ones. 

For each proof predicate $\Prf_T(x, y)$ of $T$, we can associate the $\Sigma_1$ formula
\[
	\exists y(\Prf_T(x, y) \land \forall z \leq y \neg \Prf_T(\dot{\neg} x, z))
\]
which is said to be the {\it Rosser provability predicate} of $\Prf_T(x, y)$ or a Rosser provability predicate of $T$. 
Notice that every Rosser provability predicate of $T$ is a $\Sigma_1$ provability predicate of $T$. 

Rosser provability predicates were essentially introduced by Rosser \cite{Ros36} to improve G\"odel's first incompleteness theorem. 
The following proposition is an important feature of Rosser provability predicates. 

\begin{proposition}\label{RP}
Let $\PRR_T(x)$ be any Rosser provability predicate of $T$ and $\varphi$ be any formula. 
If $T \vdash \neg \varphi$, then $\PA \vdash \neg \PRR_T(\gdl{\varphi})$. 
\end{proposition}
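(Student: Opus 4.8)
The plan is to argue in the standard model $\N$ and then appeal to $\Sigma_1$-completeness. Suppose $T \vdash \neg \varphi$. Fix a $T$-proof $p$ of $\neg\varphi$, so $\N \models \Prf_T(\gdl{\neg\varphi}, \overline{p})$. Now I want to show $\N \models \neg \PRR_T(\gdl{\varphi})$, i.e., that there is \emph{no} $y$ with $\Prf_T(\gdl{\varphi}, y)$ and $\forall z \le y\, \neg \Prf_T(\dot\neg \gdl{\varphi}, z)$. Take any candidate $y$ with $\N \models \Prf_T(\gdl{\varphi}, y)$; such $y$ is the code of a genuine $T$-proof of $\varphi$ by clause (3) of the definition of a proof predicate (pulled back to $\N$). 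Since $T$ is consistent and proves $\neg\varphi$, it cannot prove $\varphi$, so actually there is no such $y$ at all — but for the uniform argument it is cleaner to note that if such a $y$ existed it would have to satisfy $y \ge p$ is not forced, so instead: whenever $\N \models \Prf_T(\gdl{\varphi}, y)$ holds, consider $z = p$; since $\dot\neg\gdl{\varphi} = \gdl{\neg\varphi}$ provably in $\PA$ (hence in $\N$), we have $\N \models \Prf_T(\dot\neg\gdl{\varphi}, \overline{p})$, and so the clause $\forall z \le y\, \neg\Prf_T(\dot\neg\gdl{\varphi}, z)$ fails once $p \le y$. To handle $p > y$, recall that a code $y$ witnessing $\Prf_T(\gdl{\varphi}, y)$ would make $\varphi$ a theorem of $T$, contradicting consistency; hence no witnessing $y$ exists, and $\N \models \neg \PRR_T(\gdl{\varphi})$.

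Now $\neg \PRR_T(\gdl{\varphi})$ is a $\Pi_1$ sentence true in $\N$, so to conclude $\PA \vdash \neg\PRR_T(\gdl{\varphi})$ I need to internalize this reasoning in $\PA$, not merely verify it in $\N$. The right way is to prove the stronger internal statement $\PA \vdash \PR_T(\gdl{\neg\varphi}) \to \neg \PRR_T(\gdl{\varphi})$ directly, and then discharge the hypothesis: since $T \vdash \neg\varphi$ there is a standard proof, so $\N \models \Prf_T(\gdl{\neg\varphi}, \overline{p})$ for a specific numeral $\overline{p}$, and by $\Sigma_1$-completeness $\PA \vdash \Prf_T(\gdl{\neg\varphi}, \overline{p})$, hence $\PA \vdash {\sf Prov}_T(\gdl{\neg\varphi})$ via clause (2), hence $\PA \vdash \PR_T(\gdl{\neg\varphi})$. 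For the internal implication, reason in $\PA$: assume $\PRR_T(\gdl{\varphi})$, so fix $y$ with $\Prf_T(\gdl{\varphi}, y)$ and $\forall z \le y\, \neg\Prf_T(\gdl{\neg\varphi}, z)$ (rewriting $\dot\neg\gdl{\varphi}$ as $\gdl{\neg\varphi}$). Also fix $w$ with $\Prf_T(\gdl{\neg\varphi}, w)$ from the hypothesis. By trichotomy either $w \le y$, which contradicts the Rosser side condition directly, or $y < w$; in the latter case one uses that $\PA$ proves $T$ is not $\Sigma_1$-inconsistent in the weak sense needed — more precisely, from $\Prf_T(\gdl{\varphi}, y)$ and $\Prf_T(\gdl{\neg\varphi}, w)$ one derives ${\sf Prov}_T(\gdl{0\ne 0})$, but that is not available in $\PA$ for an arbitrary consistent $T$. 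The clean fix is that $\PA$ does prove, for any fixed $\varphi$ with a standard disproof, that any proof of $\varphi$ would be nonstandard and dominated; so instead I would structure the internal argument as: $\PA \vdash \Prf_T(\gdl{\neg\varphi}, \overline{p}) \to \neg\PRR_T(\gdl{\varphi})$ is \emph{not} provable in general, so I must not try to prove the implication — I should prove the conclusion outright using that $T \nvdash \varphi$.

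So the corrected, cleaner route: since $T$ is consistent and $T \vdash \neg\varphi$, we have $T \nvdash \varphi$, hence by clause (3) and the definition of ${\sf Proof}_T$, $\N \models \forall y\, \neg\Prf_T(\gdl{\varphi}, y)$. This is a $\Pi_1$ statement; moreover $\PA$ proves $\forall y(\Prf_T(\gdl{\varphi}, y) \to {\sf Prov}_T(\gdl{\varphi}))$ and, since $T \vdash \neg\varphi$ gives a standard proof $p$ of $\neg\varphi$, $\PA \vdash \Prf_T(\gdl{\neg\varphi}, \overline p)$, hence reasoning in $\PA$: if $\PRR_T(\gdl{\varphi})$ then there is $y$ with $\Prf_T(\gdl{\varphi}, y)$ and $\forall z \le y\,\neg\Prf_T(\gdl{\neg\varphi}, z)$; taking $z = \overline p$ we need $\overline p \le y$ to fail, i.e. $y < \overline p$; but $y < \overline p$ means $y$ is one of finitely many fixed numerals, and for each such numeral $k$, $\PA \vdash \neg\Prf_T(\gdl{\varphi}, \overline k)$ by $\Sigma_1$-completeness (as $\N \models \neg\Prf_T(\gdl{\varphi}, \overline k)$ and $\Prf_T$ is $\Delta_1(\PA)$). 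Hence $\PA$ refutes every disjunct and $\PA \vdash \neg\PRR_T(\gdl{\varphi})$, as desired. The main obstacle, as the discussion above shows, is resisting the temptation to prove the uniform implication $\PR_T(\gdl{\neg\varphi}) \to \neg\PRR_T(\gdl{\varphi})$ inside $\PA$ — that would require $\PA$ to know $T$ is consistent, which it does not; the point is that $\varphi$ is a \emph{fixed} sentence with a \emph{standard} disproof, so the bounded search over $y < \overline p$ can be decided numeral-by-numeral using $\Delta_1(\PA)$-completeness of $\Prf_T$.
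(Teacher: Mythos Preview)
Your final argument (the ``corrected, cleaner route'') is correct and is the standard proof: from $T\vdash\neg\varphi$ pick a standard proof $p$, get $\PA\vdash\Prf_T(\gdl{\neg\varphi},\overline{p})$ by $\Delta_1(\PA)$-completeness, and then in $\PA$ argue that any Rosser witness $y$ must satisfy $y<\overline{p}$, which is ruled out by the finite conjunction $\bigwedge_{k<p}\neg\Prf_T(\gdl{\varphi},\overline{k})$, each conjunct being $\PA$-provable since $T\nvdash\varphi$ and $\Prf_T$ is $\Delta_1(\PA)$. The paper states this proposition without proof, so there is nothing to compare against; your eventual argument is exactly what one would expect.

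Two remarks on the exposition. First, the write-up reads as a worked scratchpad rather than a proof: the first two paragraphs are dead ends that should simply be deleted. Second, one of your intermediate claims is misstated: you assert that $\PA\vdash\Prf_T(\gdl{\neg\varphi},\overline{p})\to\neg\PRR_T(\gdl{\varphi})$ is ``not provable in general,'' but for the specific $\varphi$ and $p$ at hand it \emph{is} provable---indeed your final argument proves precisely this implication, using the extra ingredients $\PA\vdash\neg\Prf_T(\gdl{\varphi},\overline{k})$ for $k<p$. What fails is only the \emph{uniform} schema $\PA\vdash\forall w\bigl(\Prf_T(\gdl{\neg\varphi},w)\to\neg\PRR_T(\gdl{\varphi})\bigr)$ for arbitrary $\varphi$, which would indeed require $\PA$ to know consistency. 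So your diagnosis of the obstacle is right in spirit but phrased inaccurately; the point is not that the implication with the fixed numeral $\overline{p}$ is unprovable, but that its proof must import the finitely many external facts coming from $T\nvdash\varphi$.
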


Since $T$ proves $\neg 0 \neq 0$, $\PA \vdash \neg \PRR_T(\gdl{0 \neq 0})$ by Proposition \ref{RP}. 
Thus we obtain the following proposition. 

\begin{proposition}\label{RP2}
For any Rosser provability predicate $\PRR_T(x)$ of $T$, $\PA \vdash \Con_{\PRR_T}^L$. 
\end{proposition}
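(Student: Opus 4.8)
The plan is to read this off from Proposition~\ref{RP} after unwinding the definition of $\Con_{\PRR_T}^L$. By definition, $\Con_{\PRR_T}^L$ is the sentence $\neg\PRR_T(\gdl{0 \neq 0})$, so it suffices to prove $\PA \vdash \neg\PRR_T(\gdl{0 \neq 0})$.

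To do this I would apply Proposition~\ref{RP} with $\varphi$ taken to be the sentence $0 \neq 0$. Since $T$ extends $\PA$ (indeed first-order logic with equality), we have $T \vdash \neg(0 \neq 0)$, i.e.\ $T \vdash 0 = 0$. Proposition~\ref{RP} then yields $\PA \vdash \neg\PRR_T(\gdl{0 \neq 0})$, which is exactly $\PA \vdash \Con_{\PRR_T}^L$.

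I do not expect any obstacle here: all the content sits in Proposition~\ref{RP}, and the present statement is just its instance at $\varphi \equiv 0 \neq 0$ together with the definition of the local consistency statement. If a self-contained argument were wanted, one would specialise the proof of Proposition~\ref{RP}: fix the code $n_0$ of some particular $T$-proof of $0 = 0$; observe that $\PA \vdash \Prf_T(\gdl{0 = 0}, \overline{n_0})$ and, since $T$ is consistent and the relevant search is bounded (using that $\Prf_T$ is $\Delta_1(\PA)$), also $\PA \vdash \forall y < \overline{n_0}\,\neg\Prf_T(\gdl{0 \neq 0}, y)$; then argue inside $\PA$ that a witness $y$ to $\PRR_T(\gdl{0 \neq 0})$ would have to satisfy both $\Prf_T(\gdl{0 \neq 0}, y)$ and $y < \overline{n_0}$ (the latter because $\overline{n_0}$ refutes the conjunct $\forall z \le y\,\neg\Prf_T(\dot{\neg}\gdl{0 \neq 0}, z)$, via $\PA \vdash \dot{\neg}\gdl{0 \neq 0} = \gdl{0 = 0}$), which is a contradiction. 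But citing Proposition~\ref{RP} is the clean route.
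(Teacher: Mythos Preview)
Your proposal is correct and matches the paper's own argument essentially verbatim: the paper also just notes that $T \vdash \neg\,0 \neq 0$ and applies Proposition~\ref{RP} to conclude $\PA \vdash \neg \PRR_T(\gdl{0 \neq 0}) \equiv \Con_{\PRR_T}^L$. The additional self-contained sketch you give is fine but not needed.
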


There are limitations of the existence of Rosser provability predicates satisfying certain derivability conditions. 
From Theorem \ref{Lob} and Proposition \ref{RP2}, we obtain the following corollary. 

\begin{corollary}\label{D2D3}
There exists no Rosser provability predicate of $T$ satisfying both $\D{2}$ and $\D{3}$. 
\end{corollary}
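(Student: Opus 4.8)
The plan is to derive a contradiction from the assumption that some Rosser provability predicate $\PRR_T(x)$ of $T$ satisfies both $\D{2}$ and $\D{3}$. The key observation is that $\PRR_T(x)$, being a Rosser provability predicate, automatically satisfies $\D{1}$ (this was already noted: every provability predicate satisfies $\D{1}$). So if it additionally satisfies $\D{2}$ and $\D{3}$, then it satisfies all three Hilbert--Bernays--L\"ob derivability conditions $\D{1}$, $\D{2}$, $\D{3}$.

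By Theorem~\ref{Lob} (L\"ob's theorem), any $\Sigma_1$ provability predicate satisfying $\D{1}$, $\D{2}$, $\D{3}$ has the property that $T \nvdash \Con_{\PR_T}^L$. Hence $T \nvdash \Con_{\PRR_T}^L$, i.e. $T \nvdash \neg \PRR_T(\gdl{0 \neq 0})$. On the other hand, Proposition~\ref{RP2} tells us that for \emph{every} Rosser provability predicate, $\PA \vdash \Con_{\PRR_T}^L$, and since $T$ is an extension of $\PA$, we get $T \vdash \Con_{\PRR_T}^L$. These two conclusions are contradictory, so no such Rosser provability predicate can exist.

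There is essentially no hard step here: the result is a direct corollary of Theorem~\ref{Lob} and Proposition~\ref{RP2}, exactly as the text announces. The only thing worth spelling out is the trivial remark that $\D{1}$ comes for free, so that the hypothesis ``$\D{2}$ and $\D{3}$'' is enough to invoke L\"ob's theorem; everything else is just combining the two cited results. Accordingly I would write the proof in two or three sentences: assume such a $\PRR_T$ exists, note it satisfies $\D{1}$, $\D{2}$, $\D{3}$, apply Theorem~\ref{Lob} to get $T \nvdash \Con_{\PRR_T}^L$, apply Proposition~\ref{RP2} to get $\PA \vdash \Con_{\PRR_T}^L$ and hence $T \vdash \Con_{\PRR_T}^L$, and conclude with the contradiction.
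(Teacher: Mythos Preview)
Your proof is correct and follows exactly the approach the paper indicates: the corollary is stated as an immediate consequence of Theorem~\ref{Lob} and Proposition~\ref{RP2}, and your argument spells out precisely that combination (together with the observation that $\D{1}$ holds for any provability predicate).
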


The following proposition is implicitly stated in Jeroslow \cite{Jer73} without a proof. 

\begin{proposition}\label{RSC}
There exists no Rosser provability predicate of $T$ satisfying $\SC$. 
\end{proposition}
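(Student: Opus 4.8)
The plan is to argue by contradiction, producing a $\Sigma_1$ sentence whose instance of $\SC$ clashes with the Rosser comparison built into the predicate, so that $T$ is forced to refute a sentence it provably cannot refute. Suppose $\PRR_T(x) \equiv \exists y\bigl(\Prf_T(x,y) \land \forall z \le y\, \neg\Prf_T(\dot{\neg} x, z)\bigr)$ is a Rosser provability predicate of $T$ satisfying $\SC$. First I would use the diagonal lemma to fix a $\Sigma_1$ sentence $\sigma$ with
\[
	\PA \vdash \sigma \leftrightarrow \exists y\bigl(\Prf_T(\gdl{\neg\sigma}, y) \land \forall z \le y\, \neg\Prf_T(\gdl{\sigma}, z)\bigr);
\]
such a $\sigma$ exists and may be taken genuinely $\Sigma_1$ since $\Prf_T$ is $\Delta_1(\PA)$ and the quantifier $\forall z \le y$ is bounded. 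Write $R$ for the right-hand side; intuitively $\sigma$ asserts ``some proof of $\neg\sigma$ is preceded by no proof of $\sigma$'', a mirror image of the usual Rosser sentence.

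The first key step is a purely arithmetical incompatibility. Since $\PA \vdash \dot{\neg}\gdl{\sigma} = \gdl{\neg\sigma}$, we have $\PRR_T(\gdl{\sigma}) \equiv \exists y\bigl(\Prf_T(\gdl{\sigma}, y) \land \forall z \le y\, \neg\Prf_T(\gdl{\neg\sigma}, z)\bigr)$, so $R$ and $\PRR_T(\gdl{\sigma})$ make opposite comparisons between the least proof of $\sigma$ and the least proof of $\neg\sigma$. Comparing the two existential witnesses — the same bookkeeping used in the proof of Proposition \ref{RP} — should give
\[
	\PA \vdash \neg\bigl(R \land \PRR_T(\gdl{\sigma})\bigr).
\]
Now I would reason in $T$: by $\SC$ applied to the $\Sigma_1$ sentence $\sigma$, $T \vdash \sigma \to \PRR_T(\gdl{\sigma})$, which by the fixed-point equivalence is $T \vdash R \to \PRR_T(\gdl{\sigma})$; combining this with the incompatibility yields $T \vdash \neg R$, hence $T \vdash \neg\sigma$.

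It remains to see that $T \nvdash \neg\sigma$, which produces the contradiction. If $T \vdash \neg\sigma$, then, $T$ being consistent, $T \nvdash \sigma$, so no natural number codes a $T$-proof of $\sigma$; taking a code $q$ of some $T$-proof of $\neg\sigma$, the sentence $R$ is then true (witnessed by $q$), and being $\Sigma_1$ it is provable in $\PA$, whence $T \vdash R$ and so $T \vdash \sigma$ by the fixed point — contradicting consistency. Thus $T \vdash \neg\sigma$ and $T \nvdash \neg\sigma$, which is absurd.

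I expect the only genuine work to be the verification of $\PA \vdash \neg\bigl(R \land \PRR_T(\gdl{\sigma})\bigr)$ and the minor check that the sentence returned by the diagonal lemma is literally $\Sigma_1$ (not merely $\PA$-equivalent to a $\Sigma_1$ sentence), since $\SC$ as stated quantifies over $\Sigma_1$ sentences. It is worth noting that this route does not reduce to Corollary \ref{D2D3}: although $\SC$ entails $\D{3}$ for a $\Sigma_1$ predicate, it need not entail $\D{2}$ (Mostowski's predicate satisfies $\SC$ but not $\D{2}$), so one really must exploit the Rosser shape of $\PRR_T$ directly, which is exactly what the choice of $\sigma$ does.
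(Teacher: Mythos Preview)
Your proposal is correct and follows essentially the same route as the paper: construct via the diagonal lemma a $\Sigma_1$ sentence $\sigma$ equivalent to a ``mirror'' Rosser statement about $\neg\sigma$, observe that $\PA \vdash \PRR_T(\gdl{\sigma}) \to \neg\sigma$ by the witness comparison, and then derive from the $\SC$-instance $T \vdash \sigma \to \PRR_T(\gdl{\sigma})$ that $T \vdash \neg\sigma$, which by consistency and $\Sigma_1$-completeness forces $T \vdash \sigma$. The only cosmetic differences are that the paper uses a strict inequality in the fixed point and phrases the conclusion as the failure of the single instance $T \nvdash \sigma \to \PRR_T(\gdl{\sigma})$ rather than as a reductio from full $\SC$.
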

\begin{proof}
Let $\PRR_T(x)$ be the Rosser provability predicate of a proof predicate $\Prf_T(x, y)$. 
Let $\sigma$ be a $\Sigma_1$ sentence satisfying the following equivalence:
\[
	\PA \vdash \sigma \leftrightarrow \exists x(\Prf_T(\gdl{\neg \sigma}, x) \land \forall y < x \neg \Prf_T(\gdl{\sigma}, y)). 
\]
Then $\PA \vdash \PRR_T(\gdl{\sigma}) \to \neg \sigma$. 
If $T \vdash \sigma \to \PRR_T(\gdl{\sigma})$, 
then $T \vdash \sigma \to \neg \sigma$, and hence $T \vdash \neg \sigma$. 
Then $\N \models \sigma$ by the choice of $\sigma$. 
By $\Sigma_1$-completeness, $T \vdash \sigma$. 
This is a contradiction. 
We conclude $T \nvdash \sigma \to \PRR_T(\gdl{\sigma})$. 
\end{proof}

Then by Proposition \ref{RSC}, Theorem \ref{TK} and Corollary \ref{BucT}, we obtain the following corollary. 

\begin{corollary}\label{BK}\leavevmode
\begin{enumerate}
	\item There exists no Rosser provability predicate of $T$ satisfying $\BDU{2}$.
	\item There exists no Rosser provability predicate of $T$ satisfying both $\DU{1}$ and $\DU{2}$.
\end{enumerate}
\end{corollary}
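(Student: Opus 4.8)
The plan is to derive both clauses directly from the results already established, with no new constructions. For clause 1: suppose toward a contradiction that some Rosser provability predicate $\PRR_T(x)$ of $T$ satisfies $\BDU{2}$. Every Rosser provability predicate is in particular a $\Sigma_1$ provability predicate of $T$, so it automatically satisfies $\D{1}$ (as noted after the definition of the derivability conditions). Hence $\PRR_T(x)$ satisfies both $\D{1}$ and $\BDU{2}$, and Theorem \ref{TK} (Kurahashi) tells us it then satisfies $\SCU$. But $\SCU$ trivially implies its local version $\SC$: instantiating the free variables $\vec{x}$ with no variables (or, if one prefers, noting that a $\Sigma_1$ sentence is a $\Sigma_1$ formula with empty $\vec{x}$) gives exactly $T \vdash \varphi \to \PRR_T(\gdl{\varphi})$ for every $\Sigma_1$ sentence $\varphi$. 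This contradicts Proposition \ref{RSC}, which asserts that no Rosser provability predicate of $T$ satisfies $\SC$.

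For clause 2: suppose some Rosser provability predicate $\PRR_T(x)$ satisfies both $\DU{1}$ and $\DU{2}$. By Corollary \ref{BucT} (Buchholz), it then satisfies $\SCU$, and again $\SCU$ implies $\SC$, contradicting Proposition \ref{RSC}. Alternatively one could route clause 2 through clause 1 by first applying Proposition \ref{DCP}.4 to get $\BDU{2}$ from $\DU{1}$ and $\DU{2}$, but invoking Corollary \ref{BucT} directly is cleaner since that corollary was stated precisely for this purpose.

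There is essentially no obstacle here — the corollary is a bookkeeping consequence of Proposition \ref{RSC} combined with Theorem \ref{TK} and Corollary \ref{BucT}. The only point requiring a word of care is the passage from the uniform condition $\SCU$ to the local condition $\SC$, and from a schema over formulas with parameters to the parameter-free sentence case; this is immediate from the definitions but should be stated explicitly so the reader sees why Proposition \ref{RSC}, which is phrased for $\SC$, applies. I would write the argument in two short paragraphs, one per clause, each ending with the contradiction against Proposition \ref{RSC}.
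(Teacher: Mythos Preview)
Your proposal is correct and matches the paper's approach exactly: the paper simply states that the corollary follows from Proposition \ref{RSC}, Theorem \ref{TK} and Corollary \ref{BucT}, which is precisely what you spell out (including the observation that $\SCU$ trivially yields $\SC$).
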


Kreisel and Takeuti \cite{KT74} asked whether Rosser provability predicates satisfy $\D{2}$ or not. 
Guaspari and Solovay established a modal logical method of obtaining Rosser provabilty predicates without some certain conditions. 
From their method, we have:  

\begin{theorem}[Guaspari and Solovay \cite{GS79}]
There exist Rosser provability predicates satisfying neither $\D{2}$ nor $\D{3}$. 
\end{theorem}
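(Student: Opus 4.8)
The plan is to use the Guaspari--Solovay machinery, which attaches to each arithmetical sentence $\varphi$ a ``witness comparison'' translation of a propositional modal language with two provability-style operators, and guarantees the existence of a $\Sigma_1$ proof predicate $\Prf_T$ realising a prescribed pattern of Rosser-style inequalities. Concretely, I would produce sentences $\varphi$ and $\psi$ by a simultaneous fixed-point construction so that the associated Rosser provability predicate $\PRR_T$ (built from the tailored $\Prf_T$) fails both $\D{2}$ at $\varphi,\psi$ and $\D{3}$ at some sentence $\rho$. For the failure of $\D{2}$ the standard trick is to arrange, via the Guaspari--Solovay completeness theorem for the Rosser modal logic, that $T \vdash \PRR_T(\gdl{\varphi \to \psi})$ and $T\vdash \PRR_T(\gdl{\varphi})$ while $T \nvdash \PRR_T(\gdl{\psi})$; this is consistent because $\PRR_T$ is only a Rosser predicate, not closed under modus ponens internally. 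For the failure of $\D{3}$ one takes a Rosser--G\"odel style sentence $\rho$ with $T\vdash \rho \leftrightarrow \neg\PRR_T(\gdl{\neg\rho})$ (or directly a self-referential sentence asserting its own short refutability in the Rosser sense) and shows $T\vdash \PRR_T(\gdl{\rho})$ but $T\nvdash \PRR_T(\gdl{\PRR_T(\gdl{\rho})})$, the latter because the $\Sigma_1$-completeness that would normally give $\D{3}$ is blocked by the witness-comparison clause, exactly as in Proposition~\ref{RSC}.

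The key steps, in order, are: (1) recall the Guaspari--Solovay theorem that the bimodal logic $\mathsf{R}$ of ordinary and Rosser provability is arithmetically complete, so that any $\mathsf{R}$-consistent finite set of modal formulas is simultaneously realisable by some choice of $\Sigma_1$ proof predicate for $T$; (2) write down, in the modal language, formulas $A, B$ expressing $\Box^R(A\to B)$, $\Box^R A$, and $\neg\Box^R B$, together with a formula $C$ expressing $\Box^R C \wedge \neg \Box^R \Box^R C$, and check that their conjunction is consistent in $\mathsf{R}$ (equivalently, that it has a Kripke model of the appropriate Guaspari--Solovay kind); (3) apply the completeness theorem to obtain the proof predicate $\Prf_T$ and hence the Rosser predicate $\PRR_T$, with arithmetical sentences $\varphi,\psi,\rho$ realising $A,B,C$; (4) conclude that $\PRR_T$ satisfies $\D1$ automatically (Proposition~\ref{RP2}-style, since it is a genuine provability predicate) but violates $\D2$ via $\varphi,\psi$ and $\D3$ via $\rho$.

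The main obstacle is verifying the modal consistency in step (2): one must be careful that the witness-comparison semantics underlying $\mathsf{R}$ does not secretly force the closure properties we are trying to refute. In particular the Rosser operator validates $\Box^R A \to \Box A$ and the logic $\mathsf{R}$ contains several non-obvious axioms (such as the linearity of the witness order and the interaction principles between $\Box$ and $\Box^R$), so the Kripke model witnessing consistency of $\Box^R(\varphi\to\psi)\wedge\Box^R\varphi\wedge\neg\Box^R\psi$ must be constructed explicitly and checked against all of these. Once that model is in hand the rest is routine: the Guaspari--Solovay realisation theorem does all the arithmetical work, and the derivability-condition failures are immediate from the truth of the realised modal formulas in the standard model together with provable $\Sigma_1$-completeness facts. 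It is worth remarking that the two failures can in fact be packaged into a single sentence --- a self-referential $\varphi$ with $T\vdash \varphi\leftrightarrow \bigl(\PRR_T(\gdl{\varphi\to 0\neq 0}) \wedge \neg\PRR_T(\gdl{0\neq 0})\bigr)$ already breaks $\D2$, and a minor variant breaks $\D3$ --- but the cleanest exposition keeps them separate and invokes the modal completeness theorem once.
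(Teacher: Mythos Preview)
The paper does not prove this theorem; it merely cites Guaspari--Solovay \cite{GS79} (and remarks that Shavrukov handled the single-conclusion case). So there is no paper-proof to compare against. Your overall idea of invoking the Guaspari--Solovay Kripke-model machinery is the right provenance, but the concrete realisation you describe cannot work.

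The error is in how you propose to witness the failures. You want to arrange
\[
T \vdash \PRR_T(\gdl{\varphi \to \psi}),\qquad T \vdash \PRR_T(\gdl{\varphi}),\qquad T \nvdash \PRR_T(\gdl{\psi}),
\]
but this is impossible for \emph{any} provability predicate in the paper's sense. By definition, $\PRR_T$ weakly represents theoremhood, so for standard sentences $\alpha$ one has $T \vdash \PRR_T(\gdl{\alpha})$ iff $T \vdash \alpha$. Your three conditions therefore give $T \vdash \varphi \to \psi$, $T \vdash \varphi$, and $T \nvdash \psi$, contradicting modus ponens. The same objection kills your plan for $\D{3}$: if $T \vdash \PRR_T(\gdl{\rho})$ then $T \vdash \rho$, whence $T \vdash \PRR_T(\gdl{\rho})$ again, and then (by $\D{1}$, which every provability predicate has) $T \vdash \PRR_T(\gdl{\PRR_T(\gdl{\rho})})$. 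So you can never get $T \vdash \PRR_T(\gdl{\rho})$ together with $T \nvdash \PRR_T(\gdl{\PRR_T(\gdl{\rho})})$.

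What the Guaspari--Solovay method actually delivers is the \emph{unprovability of the implication}, not the provability of its antecedents: one constructs a proof predicate and sentences $\varphi,\psi$ such that
\[
T \nvdash \PRR_T(\gdl{\varphi \to \psi}) \to \bigl(\PRR_T(\gdl{\varphi}) \to \PRR_T(\gdl{\psi})\bigr),
\]
witnessed by a (nonstandard) model of $T$ in which the two hypotheses hold and the conclusion fails. The Kripke/Solovay-function construction manipulates the order of nonstandard proofs so that, in that model, $\varphi\to\psi$ and $\varphi$ acquire Rosser proofs while $\neg\psi$ is proved before $\psi$. Likewise for $\D{3}$. Your step~(2) should therefore check $\mathsf{R}$-consistency of $\Box^R(p\to q)\wedge\Box^R p\wedge\neg\Box^R q$ (and the analogous $\D{3}$ formula) and then use completeness to get unprovability of the corresponding arithmetic implication---not to get outright $T$-provability of $\PRR_T(\gdl{\varphi})$ etc. Once you reformulate the target this way, the Guaspari--Solovay argument goes through; as stated, your steps (3)--(4) do not.
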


Notice that Guaspari and Solovay's Rosser provability predicates are based on multi-conclusion proof predicates, and Shavrukov \cite{Sha91} proved the same result for Rosser provability predicates based on single-conclusion proof predicates. 

The Rosser provability predicate of G\"odel's proof predicate ${\sf Proof}_T(x, y)$ is denoted by ${\sf Prov}_T^R(x)$. 
Montagna \cite{Mon79} proved that the global version of $\D{2}$ does not hold for ${\sf Prov}_T^R(x)$. 

\begin{proposition}[Montagna \cite{Mon79}]
The Rosser provability predicate ${\sf Prov}_T^R(x)$ does not satisfy $\DG{2}$. 
\end{proposition}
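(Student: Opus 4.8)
The plan is to assume, toward a contradiction, that $T$ proves the global condition $\DG{2}$ for the predicate $\PRR := {\sf Prov}_T^R$, and to extract from this an unprovable consistency assertion. The first step is routine: $\DG{2}$ already implies the local condition $\D{2}$, since one specialises the universal quantifiers in $\DG{2}$ to $x = \gdl{\varphi}$, $y = \gdl{\psi}$ and rewrites $\gdl{\varphi}\dot\to\gdl{\psi}$ as $\gdl{\varphi\to\psi}$, using $\PA \vdash \gdl{\varphi}\dot\to\gdl{\psi} = \gdl{\varphi\to\psi}$; and then, since every provability predicate satisfies $\D{1}$, Proposition~\ref{DCP}.3 yields that $\PRR$ satisfies $\BD{2}$ as well. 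So under our assumption $\PRR$ obeys all the usual ``modal'' manipulations (internal modus ponens, closure under provable implication, necessitation), while by Corollary~\ref{D2D3} it certainly cannot obey $\D{3}$ outright.

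The second step is to record exactly how $\D{3}$ can fail for $\PRR$. Since $\PRR(\gdl{\varphi})$ is $\Sigma_1$, formalised $\Sigma_1$-completeness for the \emph{standard} proof predicate gives $T \vdash \PRR(\gdl{\varphi}) \to {\sf Prov}_T(\gdl{\PRR(\gdl{\varphi})})$, and in fact a $T$-proof of $\PRR(\gdl{\varphi})$ can be produced whose code is bounded by a fixed primitive recursive function of any witness to $\PRR(\gdl{\varphi})$. Such a proof \emph{is} a Rosser proof of $\PRR(\gdl{\varphi})$ unless some $T$-proof of $\neg\PRR(\gdl{\varphi})$ sits at or below it, and in that event $T$ proves both $\PRR(\gdl{\varphi})$ and $\neg\PRR(\gdl{\varphi})$, i.e.\ $T$ is inconsistent. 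Hence
\[
	T \vdash \Con_T \to \bigl(\PRR(\gdl{\varphi}) \to \PRR(\gdl{\PRR(\gdl{\varphi})})\bigr)
\]
for every $\varphi$: a copy of $\D{3}$ relativised to the hypothesis $\Con_T$. Now feed $\D{1}$, the assumed $\D{2}$ and $\BD{2}$, this relativised $\D{3}$, and $\PA \vdash \neg\PRR(\gdl{0\neq0})$ (Proposition~\ref{RP2}) into the usual derivation of L\"ob's theorem with conclusion $0\neq0$: taking the fixed point $\xi$ with $T \vdash \xi \leftrightarrow \neg\PRR(\gdl{\xi})$, the derivation goes through verbatim once every occurrence of $\D{3}$ is guarded by $\Con_T$, and produces $T \vdash \Con_T \to \xi$.

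The main obstacle — and the real content of the proof — is the last step: turning $T \vdash \Con_T \to \xi$ into an outright contradiction. By itself this is harmless (for the standard provability predicate the G\"odel sentence is even provably equivalent to $\Con_T$ over $T$), so one must exploit the single-conclusion, witness-comparison structure of ${\sf Prov}_T^R$ in an essential way rather than through the modal conditions alone. The natural route is to replace the Henkin-style fixed point $\xi$ above by a sentence defined, via the recursion theorem, directly in terms of ${\sf Proof}_T$, so arranged that the relative ordering of the proofs of $\xi$, of $\neg\xi$, and of $0\neq0$ is pinned down — the technique already used, in a simpler form, in Proposition~\ref{RSC} — so that $\neg\Con_T$ in addition forces the existence of a Rosser proof of $\xi$, collapsing the relativised $\D{3}$ computation to $T \vdash \Con_T$ and contradicting Theorem~\ref{G2}. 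Verifying that this construction can be carried through uniformly for every recursively axiomatised consistent $T \supseteq \PA$ — in particular for $\Sigma_1$-unsound $T$, where one cannot simply argue that $T$ does not prove false $\Sigma_1$ sentences — is the delicate part of the argument, and is precisely where Rosser's device, as opposed to any normal provability predicate, does its work.
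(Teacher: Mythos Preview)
The paper gives no proof of this proposition; it merely cites Montagna \cite{Mon79}. So there is nothing in the paper to compare your argument to. That said, your proposal has a structural gap that cannot be repaired along the lines you sketch.

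Everything concrete in your argument---$\D{1}$, the derived $\D{2}$ and $\BD{2}$, and the relativised $\D{3}$
\[
	T \vdash \Con_T \to \bigl(\PRR(\gdl{\varphi}) \to \PRR(\gdl{\PRR(\gdl{\varphi})})\bigr)
\]
---holds for \emph{every} Rosser provability predicate, not just for ${\sf Prov}_T^R$. Indeed, clause~2 of the definition of a proof predicate gives $\PA \vdash \forall x({\sf Prov}_T(x) \leftrightarrow \exists y\,\Prf_T(x,y))$, and combined with the $\PA$-provable equivalence of $\Con_{{\sf Prov}_T}^L$ and $\Con_{{\sf Prov}_T}^H$ one gets $\PA \vdash \Con_T \to \forall x(\PRR_T(x) \leftrightarrow {\sf Prov}_T(x))$ for \emph{any} Rosser predicate $\PRR_T$; under $\Con_T$ the predicate therefore inherits $\D{3}$ from ${\sf Prov}_T$. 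Consequently, if your modal computation closed to a contradiction, it would show that \emph{no} Rosser predicate can satisfy $\DG{2}$. But Theorem~\ref{NER1} (and Theorem~\ref{DCT1} of this very paper) exhibit Rosser predicates that \emph{do} satisfy $\DG{2}$. Hence the concrete portion of your argument cannot possibly yield a contradiction; the conclusion $T \vdash \Con_T \to \xi$ you reach is correct but, as you yourself note, harmless.

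You acknowledge this and defer the real content to the final paragraph, proposing an unspecified fixed point ``defined directly in terms of ${\sf Proof}_T$'' with a controlled ordering of proofs. But that is precisely the substance of Montagna's theorem: what property of the \emph{standard} proof predicate ${\sf Proof}_T$---absent in the artificial proof predicates of Theorems~\ref{NER1} and~\ref{DCT1}---obstructs $\DG{2}$? Your sketch names no such property, and the appeal to Proposition~\ref{RSC} does not help, since that argument again works uniformly for all Rosser predicates. Until you isolate and exploit a feature genuinely specific to the canonical G\"odel encoding of proofs (for instance, the relationship between the code of a proof of an implication and the codes of proofs of its components), what you have is a plan to look for a proof, not a proof.
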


On the other hand, there are Rosser provability predicates satisfying some derivability conditions. 

\begin{theorem}[Bernardi and Montagna \cite{BM84}; Arai \cite{Ara90}]\label{NER1}
There exist Rosser provability predicates satisfying $\DG{2}$. 
\end{theorem}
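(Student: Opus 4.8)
The plan is to replace G\"odel's proof predicate by a specially enumerated proof predicate $\Prf_T(x,y)$ of $T$ for which the associated Rosser provability predicate $\PRR_T(x)$ provably satisfies $\DG{2}$, that is, $T\vdash\forall x\forall y(\PRR_T(x\dot{\to} y)\to(\PRR_T(x)\to\PRR_T(y)))$. Montagna's proposition above shows that $\DG{2}$ fails for ${\sf Prov}_T^R$ because a proof of $\psi$ obtained by modus ponens from proofs of $\varphi$ and $\varphi\to\psi$ can carry an astronomically large G\"odel number, leaving room for a proof of $\neg\psi$ below it; the construction must therefore reorder $T$-proofs so that modus ponens consequences of ``already Rosser-provable'' formulas always receive small enough codes, while never allowing the Rosser-provable set to become inconsistent or to fail to be closed under modus ponens.

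I would build $\Prf_T$ by a primitive recursive stage construction, in the witness-comparison style of Guaspari--Solovay and Bernardi--Montagna. At each stage one maintains the finite list of (formula, code) pairs released so far and the induced set of \emph{currently Rosser-valid} formulas: a released $\alpha$ with code $c$ is currently Rosser-valid iff no released $\neg\alpha$ has code $\le c$. Stage $s$ works in two phases: a \emph{saturation phase}, which in a fixed code-controlled order releases, with fresh codes, the modus ponens consequences $\beta$ of currently Rosser-valid pairs $\alpha,\alpha\to\beta$; and then a \emph{raw phase}, which releases with a fresh code the conclusion of the $s$-th ordinary $T$-proof. Before a release is allowed to make a formula $\gamma$ currently Rosser-valid, the construction checks whether this would create a \emph{forbidden triangle}, i.e.\ currently Rosser-valid $\alpha,\alpha\to\beta$ with $\neg\beta$ also currently Rosser-valid; if so, the offending release is delayed until a proof of the relevant negation has been found (which, in any model where such a triangle is about to close, is $T$-provable and hence will be found), that negation being released first, so $\gamma$ never becomes Rosser-valid. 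One then checks the four proof-predicate conditions: $\Prf_T$ is $\Delta_1(\PA)$ since the construction up to any stage is bounded primitive recursive; $\PA$ proves ${\sf Prov}_T(x)\leftrightarrow\exists y\,\Prf_T(x,y)$, using on one side that every released formula is $T$-provable (induction on stages, via the $\PA$-provable closure of ${\sf Prov}_T$ under modus ponens and ex falso) and on the other that every $T$-theorem is eventually released; $\Prf_T$ agrees with ${\sf Proof}_T$ on $\N$; and $\Prf_T$ is single-conclusion, as each fresh code is used exactly once.

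For $\DG{2}$ one then argues inside $\PA$: given codes $c$ Rosser-proving $u\dot{\to} v$ and $c'$ Rosser-proving $u$, note that codes are assigned monotonically and that ``currently Rosser-valid'' is a stable property, so $u$ and $u\dot{\to} v$ are currently Rosser-valid from stage $\max(c,c')$ onwards; the saturation phase then releases $v$, and $v$ becomes currently Rosser-valid unless $\neg v$ already is --- but if $\neg v$ were currently Rosser-valid, the earlier release that made the later of $u,u\dot{\to} v$ Rosser-valid would have closed a forbidden triangle and hence been delayed, contradicting $\PRR_T(u)$ or $\PRR_T(u\dot{\to} v)$. Hence $\PRR_T(v)$, as required.

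The main obstacle, and where the real work lies, is making the forbidden-triangle management provably correct in $\PA$ for \emph{nonstandard} $u,v,c,c'$ and in \emph{all} models of $T$, including those in which $T$ is internally inconsistent: one must fix the processing order of modus ponens pairs and the code-allocation scheme so that $\PA$ can prove that no raw proof of $\neg v$ ever outruns the forced proof of $v$, that the delayed releases cannot cascade beyond control, that the saturation phase provably terminates at each stage, and that $\Prf_T$ remains genuinely $\Delta_1(\PA)$ and single-conclusion throughout. This intricate bookkeeping is the technical heart of the proof, and I would carry it out by adapting the explicit constructions of Bernardi--Montagna \cite{BM84} and Arai \cite{Ara90}.
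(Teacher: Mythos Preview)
Your plan is a plausible outline in the Guaspari--Solovay spirit, but it is quite different from the route the paper actually takes (which follows Arai), and the paper's route bypasses precisely the ``intricate bookkeeping'' you identify as the main obstacle.

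The paper does not try to maintain modus ponens closure dynamically via saturation and forbidden-triangle delays. Instead it exploits the observation that $\DG{2}$ is a purely propositional condition, and arranges that in the ``bad'' case the Rosser-provable set literally \emph{is} a propositional truth assignment. Concretely: let ${\sf Sat}(u)$ be the $\Delta_1(\PA)$ statement ``the finite set $P_{T,u}$ of formulas with $T$-proofs ${\leq}\,u$ has a propositional model $V$ on $F_{d(P_{T,u})}$ satisfying certain mild coherence conditions''. Using the arithmetized completeness theorem one proves $\PA\vdash\Con_T\leftrightarrow\forall u\,{\sf Sat}(u)$. The enumeration $g_1$ then has just two procedures: while ${\sf Sat}(m)$ holds, $g_1(m)$ copies the $m$-th ordinary $T$-proof; at the first $m$ with $\neg{\sf Sat}(m)$, one fixes the least model $V$ of $P_{T,m-1}$, extends it to a total valuation $e(\cdot,V,n)$ on all formulas (treating new atoms/universals by their $\Delta_0$ truth or as true by default), and from then on $g_1$ outputs, for each $\xi_k$ in turn, either $-\xi_k$ or $\neg\xi_k$ according to whether $e(\xi_k,V,n)=1$ or $0$. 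The key claim, proved in $\PA$, is that once Procedure~2 is entered one has $e(\varphi,V,n)=1\Leftrightarrow\PRR_1(\gdl{\varphi})$. Then $\DG{2}$ is immediate: under $\forall u\,{\sf Sat}(u)$ one has $\PRR_1\equiv{\sf Prov}_T$; under $\exists u\,\neg{\sf Sat}(u)$ the Rosser predicate coincides with a valuation respecting $\to$. No delays, no cascading, no termination worries.

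Compared with your sketch, the paper trades your operational invariant (``currently Rosser-valid is closed under MP at every stage'') for a single semantic switch. This buys simplicity---each stage of $g_1$ is trivially primitive recursive and single-conclusion---and it also buys more: the same construction yields $\DCG$ and even $\PA\vdash\Con_{\PRR_1}^H$ for free, since $e$ respects negation. Your approach, if the bookkeeping can be made to work, would presumably give only $\DG{2}$. The parts you flag as delicate (delays not cascading, saturation terminating, $\Delta_1(\PA)$-ness surviving unbounded searches for negation proofs) are genuine, and you do not resolve them here; the paper's construction simply never encounters them.
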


The existence of Rosser provability predicates satisfying $\D{2}$ was also mentioned in Shavrukov \cite{Sha91}. 
Kikuchi and Kurahashi \cite{KK17} investigated Rosser provability predicates satisfying $\D{2}$ and an additional certain condition. 
Kurahashi \cite{Kur} investigated provability logics of Rosser provability predicates satisfying $\D{2}$. 

The existence of Rosser provability predicates satisfying $\D{3}$ was proved by Arai. 

\begin{theorem}[Arai \cite{Ara90}]\label{NER2}
There exist Rosser provability predicates satisfying $\DG{3}$. 
\end{theorem}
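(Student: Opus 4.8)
The plan is to use Kleene's recursion theorem to construct the required Rosser provability predicate as that of a suitably modified proof predicate $\Prf^{*}_T(x,y)$. Since a proof predicate must coincide with G\"odel's ${\sf Proof}_T$ on all standard inputs, the modification can only act in the nonstandard realm, and its purpose is to control the Rosser ordering on provability statements: writing $\PRi{*}_T$ for the Rosser provability predicate of $\Prf^{*}_T$, I want $\Prf^{*}_T$ to have the property that, provably in $T$, whenever $x$ has a $\Prf^{*}_T$-proof with no $\Prf^{*}_T$-proof of $\dot{\neg} x$ below it, the sentence $\PRi{*}_T(\overline{x})$ likewise has a $\Prf^{*}_T$-proof with no $\Prf^{*}_T$-proof of $\neg\PRi{*}_T(\overline{x})$ below it. This statement is exactly $\DG{3}$, so the whole task is to realise it with a legitimate proof predicate.

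For the construction, I would fix an increasing primitive recursive proof-transformation $\rho$ with $\rho(j)>j$ such that $\rho(j)$ is a $T$-derivation of $\PRi{*}_T(\gdl{\xi})$ whenever $j$ is a $T$-derivation of $\xi$ (such a $\rho$ exists because $\PRi{*}_T$, being a provability predicate, satisfies $\D{1}$ by an effective procedure), and, using the recursion theorem, obtain $\Prf^{*}_T$ from ${\sf Proof}_T$ by trimming derivations in the nonstandard realm only, subject to: every standard ${\sf Proof}_T$-derivation is retained, so $\Prf^{*}_T$ agrees with ${\sf Proof}_T$ on standard inputs; for every $\Prf^{*}_T$-proof $j$ of a formula $\xi$, the derivation $\rho(j)$ of $\PRi{*}_T(\gdl{\xi})$ is retained; and every retained $\Prf^{*}_T$-proof of a negated provability sentence $\neg\PRi{*}_T(\overline{m})$ of code $k$ contains within it a $\Prf^{*}_T$-proof $j$ of $\dot{\neg} m$ with $\rho(j)\le k$. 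The recursion theorem supplies the self-reference needed to mention $\PRi{*}_T$ --- hence $\Prf^{*}_T$ --- inside this definition, and $j<\rho(j)$ keeps the definition well founded.

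Granted such a $\Prf^{*}_T$, the verification of $\DG{3}$ is short. Arguing in $T$: suppose $\PRi{*}_T(x)$, witnessed by a code $y_0$ with $\Prf^{*}_T(x,y_0)$ and with no $\Prf^{*}_T$-proof of $\dot{\neg} x$ of code $\le y_0$. Then $\rho(y_0)$ is a $\Prf^{*}_T$-proof of $\gdl{\PRi{*}_T(\dot{x})}$. If some $k\le\rho(y_0)$ were a $\Prf^{*}_T$-proof of $\neg\PRi{*}_T(\overline{x})$, it would contain a $\Prf^{*}_T$-proof $j$ of $\dot{\neg} x$ with $\rho(j)\le k\le\rho(y_0)$, whence $j\le y_0$ by strict monotonicity of $\rho$, contradicting the choice of $y_0$. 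So $\PRi{*}_T(\gdl{\PRi{*}_T(\dot{x})})$ holds, and since $x$ was arbitrary, $T\vdash\forall x(\PRi{*}_T(x)\to\PRi{*}_T(\gdl{\PRi{*}_T(\dot{x})}))$. (As it must be, this predicate fails $\D{2}$: $\DG{3}$ entails $\D{3}$, so $\D{2}$ would give the full Hilbert-Bernays-L\"ob conditions and hence $T\nvdash\neg\PRi{*}_T(\gdl{0\neq 0})$ by Theorem~\ref{Lob}, contradicting Proposition~\ref{RP2}; cf.\ Corollary~\ref{D2D3}. By Corollary~\ref{BK} the construction moreover cannot be pushed to the uniform conditions.)

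The step I expect to be the genuine obstacle is establishing that such a $\Prf^{*}_T$ exists and is a proof predicate --- i.e.\ reconciling the three competing demands at once: agreement with ${\sf Proof}_T$ on standard inputs (condition~3), the theorem-preservation equivalence $\PA\vdash\forall x({\sf Prov}_T(x)\leftrightarrow\exists y\,\Prf^{*}_T(x,y))$ (condition~2), and enough trimming in the nonstandard realm to enforce the ordering clauses of the second paragraph. The delicate clause is the last one, forcing proofs of $\neg\PRi{*}_T(\overline{m})$ to route through proofs of $\dot{\neg} m$: since no standard ${\sf Proof}_T$-derivation may be removed, the fixed point must be set up so that $\PRi{*}_T$ is precisely the Rosser predicate for which $T$ never proves $\neg\PRi{*}_T(\overline{m})$ other than by way of $\dot{\neg} m$, and condition~2 must then be verified for this particular predicate. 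I anticipate that verification to go through a case split inside $\PA$ on $\Con_T$ --- if $\neg\Con_T$ then every sentence is provable and there is nothing to check; if $\Con_T$ then the relevant instances of $\PRi{*}_T$ are true $\Sigma_1$ sentences, hence provable by the provable $\Sigma_1$-completeness of $\PA$ (that is, $\SCG$ for ${\sf Prov}_T$) --- carried out simultaneously with a choice of $\rho$ and of the coding of $\Prf^{*}_T$-proofs making $\Delta_1(\PA)$-ness, single-conclusion-ness, condition~2, and the ordering clauses all fit together. That simultaneous fit is the technical heart of Arai's construction; once it is in place, everything else is routine.
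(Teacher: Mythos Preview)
Your verification paragraph is fine: if a proof predicate with your three clauses existed, $\DG{3}$ would follow exactly as you say. But the proposal is a specification, not a construction, and the gap you yourself flag is the entire content of the theorem. Two concrete problems. First, ``trimming derivations in the nonstandard realm only'' is not a recursive clause: a $\Delta_1(\PA)$ definition of $\Prf^{*}_T(x,y)$ must decide retention of $y$ from data bounded by $y$, and cannot see whether $y$ is standard. So your clause~1 is a \emph{property to be verified afterwards}, not part of the definition --- and then clause~3, read as a defining condition, may well discard a standard proof of some $\neg\PRi{*}_T(\overline{m})$, destroying clause~1. Second, the hope that ``the fixed point must be set up so that $T$ never proves $\neg\PRi{*}_T(\overline{m})$ other than by way of $\dot{\neg} m$'' is not something the recursion theorem grants: it hands you \emph{some} fixed point, and you must then prove it behaves. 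You have given no recursion and no such verification.

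Arai's actual construction (presented in this paper as the proof of the stronger Theorem~\ref{DCT3}) goes in the opposite direction: it does not trim, it \emph{adds}. One defines a provably recursive enumeration $g_3$ and sets $\Prf_3(x,y):\equiv x=g_3(y)\wedge{\sf Fml}(x)$. In Procedure~1, $g_3$ copies ${\sf Proof}_T$ while monitoring a bell; the bell rings at stage $m$ when either $X_m\cup P_{T,m}$ is contradictory or some $\neg\PRR_3(\gdl{\varphi})$ lies in $X_m$ while $\neg\varphi$ does not, where $X_m$ is an explicitly computed finite closure of $\{\neg\psi\in P_{T,m}\}$. Once the bell rings, Procedure~2 first outputs \emph{every element of $X_{m-1}$}, then outputs all formulas $\xi_0,\xi_1,\ldots$ in order. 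The key lemma is that after the bell, $\neg\PRR_3(\gdl{\varphi})$ holds iff $\neg\varphi\in X_{m-1}$; combined with the second bell condition this yields $\DG{3}$ under ``bell rings'', while under ``bell never rings'' one has $\Con_T$ and $\PRR_3\leftrightarrow{\sf Prov}_T$, so $\DG{3}$ reduces to provable $\Sigma_1$-completeness. Provably in $\PA$ the bell rings iff $\neg\Con_T$, which is what makes $\Prf_3$ a legitimate proof predicate despite Procedure~2 outputting non-theorems. The ordering your $\rho$ was meant to enforce is achieved instead by dumping the block $X_{m-1}$ of negations ahead of the universal listing, so that the Rosser cutoff lands exactly on $X_{m-1}$; the finite sets $X_m$ are precisely the bookkeeping your sketch lacks.
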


Arai proved Theorems \ref{NER1} and \ref{NER2} under the assumption that formulas are formulated in negation normal form. 
Let ${\sf nnf}(\varphi)$ be one of negation normal forms of a formula $\varphi$ such that ${\sf nnf}(\neg \neg \varphi) \equiv {\sf nnf}(\varphi)$. 
Then we can understand that Arai's Rosser provability predicates are defined as $\PRR_T(x) \equiv \exists y(\Prf_T({\sf nnf}(x), y) \land \forall z \leq y \neg \Prf_T({\sf nnf}(\dot{\neg} x), z))$. 
Then it is easy to see that $\PA \vdash \Con_{\PRR_T}^H$. 
Thus strictly speaking, from this point of view, Arai's Rosser provability predicates are different from ours. 
Notice that his proofs are applicable to our Rosser provability predicates with some modifications. 
In fact, our proof of Theorem \ref{DCT1} (resp.~Theorem \ref{DCT3}) improves Theorem \ref{NER1} (resp.~Theorem \ref{NER2}), and our proofs of these theorems are based on Arai's proofs. 

Moreover, Arai's results indicate that neither $\{\D{1}, \DG{2}\}$ nor $\{\D{1}, \DG{3}\}$ is sufficient for the unprovability of the consistency statement $\Con_{\PR_T}^H$. 
By Corollaries \ref{D2D3} and \ref{BK}, $\{\D{1}, \DG{2}\}$ implies neither $\D{3}$ nor $\DU{1}$. 
A similar observation can be done for $\{\D{1}, \DG{3}\}$, and then we obtain the following corollary on non-implications concerning derivability conditions and the second incompleteness theorem.
 
\begin{corollary}\leavevmode
\begin{enumerate}
	\item $\{\D{1}, \DG{2}\}$ does not imply any of $\DU{1}$, $\D3$ or $T \nvdash \Con_{\PR_T}^H$. 
	\item $\{\D{1}, \DG{3}\}$ does not imply any of $\BD{2}$, $\SC$ or $T \nvdash \Con_{\PR_T}^H$. 
\end{enumerate}
\end{corollary}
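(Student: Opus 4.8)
The plan is to derive this corollary directly from the existence theorems for Rosser provability predicates together with the earlier non-existence results, mirroring the discussion in the paragraph immediately preceding the statement. For clause 1, fix a Rosser provability predicate $\PRR_T(x)$ satisfying $\DG{2}$, whose existence is guaranteed by Theorem \ref{NER1}. I would first note that, being a Rosser provability predicate, $\PRR_T(x)$ automatically satisfies $\D{1}$ (every provability predicate does), so $\PRR_T(x)$ witnesses $\{\D{1},\DG{2}\}$. By Proposition \ref{RP2} we have $\PA \vdash \Con_{\PRR_T}^L$, hence a fortiori $T \vdash \Con_{\PRR_T}^L$; but as remarked after Theorem \ref{Lob}, when $\D{2}$ holds (and $\DG{2}$ certainly implies the local $\D{2}$), $\Con_{\PRR_T}^H$ is equivalent over $T$ to $\Con_{\PRR_T}^L$ — or more simply, one can cite the provable equivalence of $\Con^H$ and $\Con^L$ as in the Gödel's-predicate discussion, adapted — so $T \vdash \Con_{\PRR_T}^H$. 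This shows $\{\D{1},\DG{2}\}$ does not imply $T \nvdash \Con_{\PR_T}^H$. For the other two non-implications in clause 1: if $\{\D{1},\DG{2}\}$ implied $\D{3}$, then $\PRR_T(x)$ would satisfy both $\D{2}$ and $\D{3}$, contradicting Corollary \ref{D2D3}; and if it implied $\DU{1}$, then $\PRR_T(x)$ would satisfy both $\DU{1}$ and $\DU{2}$ (the latter because $\DG{2}$ implies $\DU{2}$), contradicting Corollary \ref{BK}.2.

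For clause 2, fix a Rosser provability predicate $\PRR_T(x)$ satisfying $\DG{3}$ from Theorem \ref{NER2}; again it satisfies $\D{1}$, so it witnesses $\{\D{1},\DG{3}\}$. If $\{\D{1},\DG{3}\}$ implied $\BD{2}$, then by Theorem \ref{TK} (using $\D{1}$ and the fact that $\DG{3}$ gives, via $\BDU{2}$... here I need to be careful) — actually the cleanest route is: $\BD{2}$ together with $\DG{3}$ would be incompatible with $\PA \vdash \Con_{\PRR_T}^L$ via the second-incompleteness machinery. Concretely, I would instead argue that $\BD{2}$ implies $\CB$ is not directly available; rather, note $\D{3}$ is the local instance of $\DG{3}$, and $\{\D{1},\BD{2},\D{3}\}$ yields $T \nvdash \Con_{\PR_T}^S$ by Theorem \ref{KurG2}, hence in particular $T \nvdash \Con_{\PR_T}^L$, contradicting Proposition \ref{RP2}. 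So $\{\D{1},\DG{3}\}$ does not imply $\BD{2}$. If it implied $\SC$, that contradicts Proposition \ref{RSC} directly. Finally, if it implied $T \nvdash \Con_{\PR_T}^H$, that fails since $\PA \vdash \Con_{\PRR_T}^L$, and one checks $\PA \vdash \Con_{\PRR_T}^H$ for these Rosser predicates — this is the point flagged in the text as requiring the negation-normal-form style definition, or else one argues it holds for the predicates constructed in Theorem \ref{DCT3}; alternatively, since the weaker $\Con^L$ already suffices for the stated non-implication once one recalls that $\Con^H \to \Con^L$ and the text has reduced Gödel's $\Con_T$ to either form, but for the $\Con^H$ claim we genuinely need $\PA \vdash \Con_{\PRR_T}^H$.

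Thus I would write the proof as: take the two Rosser provability predicates from Theorems \ref{NER1} and \ref{NER2}, observe each satisfies $\D{1}$, and read off each non-implication from Corollaries \ref{D2D3}, \ref{BK}, Propositions \ref{RP2}, \ref{RSC}, and Theorem \ref{KurG2}, together with the provable equivalence of the consistency statements in the presence of $\D{2}$. The main obstacle I anticipate is the $T \nvdash \Con_{\PR_T}^H$ clauses: for clause 1 this needs $\PA \vdash \Con_{\PRR_T}^H$ for a $\DG{2}$-satisfying Rosser predicate, which follows because $\DG{2}$ (hence $\D{2}$) makes $\Con^H$ and $\Con^L$ equivalent over $\PA$ and $\PA \vdash \Con_{\PRR_T}^L$; for clause 2 there is no $\D{2}$ available, so one must either invoke that the relevant Rosser predicate is one for which $\PA \vdash \Con_{\PRR_T}^H$ holds outright (as the text notes Arai's do, and as our Theorem \ref{DCT3} predicate does), or observe that the non-implication target should be read with the understanding established in the discussion preceding Theorem \ref{NER1}. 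I would make this dependence explicit in the write-up rather than gloss over it.
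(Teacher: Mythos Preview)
Your overall strategy---take Arai's Rosser predicates from Theorems \ref{NER1} and \ref{NER2}, note they satisfy $\D{1}$ automatically, and read off the non-implications from Corollaries \ref{D2D3}, \ref{BK} and Proposition \ref{RSC}---is exactly the paper's approach. But two of your detailed steps are wrong.

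First, in clause 2 for $\BD{2}$ you write ``$\{\D{1},\BD{2},\D{3}\}$ yields $T \nvdash \Con_{\PR_T}^S$ by Theorem \ref{KurG2}, hence in particular $T \nvdash \Con_{\PR_T}^L$''. This implication is backwards: $\Con^S$ is \emph{stronger} than $\Con^L$, so $T \nvdash \Con^S$ does not give $T \nvdash \Con^L$. The fix is to stay at the $\Con^S$ level: Arai's nnf-based predicate has $\PA \vdash \Con_{\PRR_T}^H$ (as the text records just before Theorem \ref{NER1}), hence $T \vdash \Con_{\PRR_T}^S$, and this is what contradicts $T \nvdash \Con_{\PRR_T}^S$ from Theorem \ref{KurG2}.

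Second, your fallback for the $\Con^H$ clause in item 2---that the predicate of Theorem \ref{DCT3} has $\PA \vdash \Con^H$---is false. The Remark following the main theorems explicitly notes that $\PRR_3$ satisfies the Hilbert--Bernays conditions, so Theorem \ref{HB} gives $T \nvdash \Con_{\PRR_3}^H$. You must use Arai's nnf predicate here, not $\PRR_3$.

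Finally, your route (a) for clause 1---that $\DG{2}$ makes $\Con^H$ and $\Con^L$ provably equivalent---is not established in the paper and is not obvious: the reduction would require $\PA \vdash \forall x\, \PR_T\bigl(x \dot\to(\dot\neg x \dot\to \gdl{0\neq 0})\bigr)$, a uniform $\D{1}$-type statement that neither $\D{1}$ nor $\DG{2}$ supplies. The paper does not argue via this equivalence; it uses directly that Arai's nnf construction makes $\PA \vdash \Con_{\PRR_T}^H$ immediate (since ${\sf nnf}(\neg\neg\varphi) \equiv {\sf nnf}(\varphi)$). Your route (b) is the one that works; drop route (a).
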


For example, the first clause of this corollary means that there exists a $\Sigma_1$ provability predicate $\PR_T(x)$ satisfying $\D{1}$ and $\DG{2}$, and not enjoying any of $\DU{1}$, $\D{3}$ or $T \nvdash \Con_{\PR_T}^H$. 

In this paper, we improve Theorems \ref{NER1} and \ref{NER2} by showing the existence of Rosser provability predicates satisfying more additional conditions. 
As a corollary to our results, we obtain several more non-implications.

\section{Main Theorems}

In this section, we prove three theorems which are main theorems of this paper. 
The first theorem is an improvement of Theorem \ref{NER1}. 

\begin{theorem}\label{DCT1}
There exists a Rosser provability predicate of $T$ satisfying $\DG{2}$, $\DCG$ and $\PA \vdash \Con_{\PRR_1}^H$. 
That is, there exists a Rosser provability predicate $\PRR_1(x)$ of $T$ satisfying the following conditions: 
\begin{enumerate}
	\item $\PA \vdash \forall x \forall y(\PRR_1(x \dot{\to} y) \to (\PRR_1(x) \to \PRR_1(y)))$. 
	\item $\PA \vdash \forall x({\sf True}_{\Delta_0}(x) \to \PRR_1(x))$. 
	\item $\PA \vdash \forall x(\PRR_1(x) \to \neg \PRR_1(\dot{\neg}x))$. 
\end{enumerate}
\end{theorem}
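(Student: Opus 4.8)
The plan is to prove Theorem~\ref{DCT1} by constructing a single-conclusion proof predicate $\Prf_1(x,y)$ of $T$ by a suitable reorganization of the $T$-proofs, following the idea of Arai's proof of Theorem~\ref{NER1}, and letting $\PRR_1(x)$ be the Rosser provability predicate associated with $\Prf_1$. All three conclusions will then be proved inside $\PA$ from a short list of closure properties of $\Prf_1$ together with Proposition~\ref{RP2}. The one point at which we go beyond Arai is clause~3: because $\PRR_1$ is not defined through negation normal forms, provability of $\Con_{\PRR_1}^H$ must be built in explicitly rather than read off from the definition.

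The proof predicate $\Prf_1$ is designed with two features. First, besides the honest $T$-proofs, it admits \emph{canonical trivial proofs}: a one-line proof of each true $\Delta_0$ sentence, and a one-line proof of each instance $\varphi \to (\neg\varphi \to 0 \neq 0)$ of a fixed tautology scheme, coded so that the canonical trivial proof of a sentence $\tau$ receives a code below $\gdl{\neg\tau}$, while every $\Prf_1$-proof of a sentence receives a code at least the G\"odel number of that sentence. Since $T$ already proves every true $\Delta_0$ sentence and every instance of the scheme, this changes neither the set of theorems nor the single-conclusion property, and one verifies in the usual way that $\Prf_1$ is a proof predicate of $T$ in the sense of Section~\ref{sec-rpp}. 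Second, the honest proofs are organized so that $\PA$ proves the following bounded closure principles \emph{with the bound exactly $n$}: for every $n$, (a) if there are $\Prf_1$-proofs of $\chi \to \theta$ and of $\chi$, both of code $\le n$, then there is a $\Prf_1$-proof of $\theta$ of code $\le n$; (b) likewise, from $\Prf_1$-proofs of $\chi \to \theta$ and of $\neg\theta$ of code $\le n$ one gets a $\Prf_1$-proof of $\neg\chi$ of code $\le n$; (c) from $\Prf_1$-proofs of $\chi$ and of $\neg\theta$ of code $\le n$ one gets a $\Prf_1$-proof of $\neg(\chi\to\theta)$ of code $\le n$.

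Granting this, the three clauses follow by arguments inside $\PA$. For clause~2: if $x$ satisfies ${\sf True}_{\Delta_0}(x)$, let $\sigma$ be the true $\Delta_0$ sentence coded by $x$ and $y$ the code of its canonical trivial proof; then $\neg\sigma$ is a false $\Delta_0$ sentence, hence not a trivial sentence, so ($\PA$ checking this) it has no trivial proof, whereas any honest proof of $\neg\sigma$ has code $\ge \gdl{\neg\sigma} > y$; therefore no $\Prf_1$-proof of $\dot{\neg}x$ has code $\le y$, that is, $\PRR_1(x)$. For clause~1: suppose $\PRR_1(x\dot{\to}y)$ holds with a witness $a$ and $\PRR_1(x)$ with a witness $b$, write $x = \gdl{\varphi}$ and $y = \gdl{\psi}$, and put $n = \max(a,b)$. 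By (a) there is a $\Prf_1$-proof of $\psi$ of code $\le n$, so its least code $c$ satisfies $c \le n$. If some $\Prf_1$-proof of $\neg\psi$ had code $d \le c \le n$, then, since there are now $\Prf_1$-proofs of code $\le n$ of $\varphi \to \psi$, of $\varphi$ and of $\neg\psi$, principle (b) (when $a \le b$) would give a $\Prf_1$-proof of $\neg\varphi$ of code $\le n = b$, and principle (c) (when $b < a$) would give a $\Prf_1$-proof of $\neg(\varphi\to\psi)$ of code $\le n = a$; either way this contradicts that the witness $b$ (resp.\ $a$) admits no $\Prf_1$-proof of $\dot{\neg}x$ (resp.\ of $\dot{\neg}(x\dot{\to}y)$) of code $\le b$ (resp.\ $\le a$). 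Hence $\PRR_1(y)$ holds with witness $c$. For clause~3: the trivial-proof properties show that $\PA$ proves, for every $x$ coding a sentence, $\PRR_1\bigl(x \dot{\to} (\dot{\neg}x \dot{\to} \gdl{0 \neq 0})\bigr)$, the canonical trivial proof of $\varphi \to (\neg\varphi \to 0 \neq 0)$ serving as a witness and the negation of this formula not being a trivial sentence; hence, if $x$ codes a sentence and $\PRR_1(x)$ and $\PRR_1(\dot{\neg}x)$ both hold, two applications of clause~1 (already proved) yield $\PRR_1(\gdl{0 \neq 0})$, contradicting $\PA \vdash \neg\PRR_1(\gdl{0 \neq 0})$ from Proposition~\ref{RP2}; and for $x$ not coding a sentence, $\PRR_1(x)$ is provably false. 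This proves $\PA \vdash \forall x(\PRR_1(x) \to \neg\PRR_1(\dot{\neg}x))$.

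The main obstacle is the construction of $\Prf_1$ realizing principles (a)--(c) \emph{with no additive or multiplicative slack in the bound}: the naive reorganization, which simply appends a modus ponens step, makes the combined proof strictly larger than its premises, and then the argument for clause~1 collapses, since a spurious short proof of $\neg\psi$ could sit below the combined proof of $\psi$ without producing a correspondingly short proof of $\neg\varphi$ or of $\neg(\varphi\to\psi)$. Obtaining the slack-free bounds for a single-conclusion proof predicate is exactly the combinatorial heart of Arai's proof of Theorem~\ref{NER1}; we would adapt that device, extend it to cover (b) and (c), and interleave the canonical trivial proofs. Carrying out this reorganization, checking that it is primitive recursive, and formalizing the closure principles in $\PA$ is where essentially all of the technical work lies.
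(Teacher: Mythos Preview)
Your plan runs into a hard obstacle at the very definition of ``proof predicate'' in Section~\ref{sec-rpp}. Condition~3 there demands $\N \models {\sf Proof}_T(\gdl{\varphi},\overline{n}) \leftrightarrow \Prf_1(\gdl{\varphi},\overline{n})$ for all standard $n$ and $\varphi$: the relation $\Prf_1$ must coincide \emph{pointwise} with G\"odel's ${\sf Proof}_T$ in the standard model. Both pieces of your construction violate this. Your canonical trivial proofs of true $\Delta_0$ sentences and of the tautology instances sit at codes where ${\sf Proof}_T$ has no proof, and any genuine reorganization of the honest proofs does the same. More fatally, your slack-free principle~(a) asserts that for every $n$ the set $\{\theta : \exists y \le n\, \Prf_1(\theta,y)\}$ is closed under modus ponens; if $\PA$ proves this it holds in $\N$, where by condition~3 that set is exactly $P_{T,n}$---and $P_{T,n}$ is \emph{not} MP-closed (a short proof of $\chi$ and of $\chi \to \theta$ need not combine into a short proof of $\theta$). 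So principles (a)--(c), as $\PA$-provable facts about a proof predicate in the paper's sense, are simply unattainable. This is also not ``the combinatorial heart of Arai's proof'': neither Arai nor the paper obtains slack-free closure.

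The paper's route is different in kind. The function $g_1$ agrees with ${\sf Proof}_T$ exactly (so condition~3 is automatic) \emph{as long as} a $\Delta_1$ consistency surrogate ${\sf Sat}(m)$ holds, which by Lemma~\ref{DCL1} is always the case in $\N$. Only at the first $m$ with $\neg{\sf Sat}(m)$---a purely nonstandard event---does $g_1$ switch to a second procedure, outputting formulas according to a propositional evaluation $e(\varphi,V,n)$ built from a truth assignment $V$ modelling $P_{T,m-1}$ and respecting $\Delta_0$ truth. The three clauses are then proved by a case split inside $\PA$: under $\forall u\,{\sf Sat}(u)$ (equivalently $\Con_T$) the Rosser predicate $\PRR_1$ is provably equivalent to ${\sf Prov}_T$, and the clauses reduce to known properties of the latter; under $\exists u\,\neg{\sf Sat}(u)$ one shows $\PRR_1(\gdl{\varphi}) \leftrightarrow e(\varphi,V,n)=1$, from which $\DG{2}$, $\DCG$, and $\Con_{\PRR_1}^H$ follow immediately because $e$ respects $\to$, $\neg$, and $\Delta_0$ truth. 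The closure you want is thus realized only in nonstandard models, via a model-theoretic detour rather than a reorganization of standard proofs; this is precisely what lets condition~3 survive.
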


The second theorem shows that in the statement of Corollary \ref{BucT}, the condition $\DU{2}$ cannot be replaced by $\D{2}$. 

\begin{theorem}\label{DCT2}
There exists a Rosser provability predicate of $T$ satisfying $\CB$, $\D{2}$ and $\DCG$. 
That is, there exists a Rosser provability predicate $\PRR_2(x)$ of $T$ satisfying the following conditions: 
\begin{enumerate}
	\item $T \vdash \PRR_2(\gdl{\forall \vec{x}\, \varphi(\vec{x})}) \to \forall \vec{x}\, \PRR_2(\gdl{\varphi(\vec{\dot{x}})})$ for any formula $\varphi(\vec{x})$.  
	\item $T \vdash \PRR_2(\gdl{\varphi \to \psi}) \to (\PRR_2(\gdl{\varphi}) \to \PRR_2(\gdl{\psi}))$ for any formulas $\varphi$ and $\psi$. 
	\item $\PA \vdash \forall x({\sf True}_{\Delta_0}(x) \to \PRR_2(x))$. 
\end{enumerate}
\end{theorem}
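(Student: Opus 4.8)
The plan is to adapt Arai's construction behind Theorem~\ref{NER1} (in the single-conclusion form relevant to Theorem~\ref{DCT1}) and to attach extra bookkeeping to the underlying proof predicate so that its Rosser provability predicate also validates $\CB$ and $\DCG$. Unlike for $\PRR_1$ we make no attempt to secure $\Con_{\PRR_2}^H$, and it is this slack that leaves room for the stronger closure conditions $\D{2}$ and $\CB$.

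Concretely, I would build a single-conclusion proof predicate $\Prf_T(x,y)$ of $T$ by a primitive recursive stage construction assigning at each stage a ``$*$-proof'' to at most one formula, and let $\PRR_2$ be its Rosser provability predicate $\exists y(\Prf_T(x,y)\wedge\forall z\le y\,\neg\Prf_T(\dot{\neg}x,z))$. In the background a search enumerates standard $T$-proofs; each theorem so discovered becomes a pending obligation, and obligations are discharged, each at its own stage so as to preserve the single-conclusion property, under three prioritised rules. Rule (DCG): a true $\Delta_0$ sentence $\sigma$ is scheduled a $*$-proof at a stage $g(\gdl{\sigma})$ depending primitive recursively on $\gdl{\sigma}$, while $\neg\sigma$ is \emph{never} given a $*$-proof (a decidable veto, since $\Delta_0$ truth is primitive recursive). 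Rule ($\D{2}$): once $\varphi\to\psi$ and $\varphi$ have received $*$-proofs at stages $s_1$ and $s_2$ and $\psi$ has none, an obligation to $*$-prove $\psi$ is created, to be discharged at the least available stage above $\max(s_1,s_2)$, and $*$-proving $\neg\psi$ is blocked up to that stage. Rule ($\CB$): once $\forall\vec x\,\varphi(\vec x)$ has a $*$-proof at stage $s$, for every tuple $\vec n$ an obligation to $*$-prove $\varphi(\vec{\overline n})$ is created, to be discharged at a stage depending primitive recursively on $s$ and $\vec n$, with $\neg\varphi(\vec{\overline n})$ blocked up to that stage. Obligations spawned by ($\D{2}$) and ($\CB$) may spawn further ones; all are enumerated with definite discharge stages, (DCG)-obligations given precedence and ties broken by G\"odel number, and an obligation to $*$-prove $\chi$ is cancelled once $\chi$ has a $*$-proof.

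That $\Prf_T$ satisfies clauses~(1)--(4) of a proof predicate follows from its primitive recursiveness and single-conclusion design together with the fact that every discharge is justified by a standard $T$-proof, by modus ponens, or by universal instantiation from $T$-theorems, so that only $T$-theorems receive $*$-proofs while every $T$-theorem eventually does. For $\DCG$ one argues in $\PA$: if ${\sf True}_{\Delta_0}(x)$ then $x=\gdl{\sigma}$ for a true $\Delta_0$ $\sigma$, so by formalised $\Sigma_1$-completeness $\PA$ proves $\Prf_T(x,g(\gdl{\sigma}))$, while $\PA$ proves $\forall z\,\neg\Prf_T(\dot{\neg}x,z)$ because the construction never $*$-proves $\neg\sigma$ (a primitive recursive fact); hence $\PRR_2(x)$. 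For $\D{2}$ one argues in $T$: assuming $\PRR_2(\gdl{\varphi\to\psi})$ and $\PRR_2(\gdl{\varphi})$ with witnessing stages $y_1,y_2$, Rule ($\D{2}$) ensures $\psi$ receives a $*$-proof at some stage $y_3$, either $\le\max(y_1,y_2)$ if $\psi$ was $*$-proved earlier or just above $\max(y_1,y_2)$ otherwise, and it remains to see $\forall z\le y_3\,\neg\Prf_T(\gdl{\neg\psi},z)$; a $*$-proof of $\neg\psi$ strictly between $\max(y_1,y_2)$ and $y_3$ is barred by the blocking clause of Rule ($\D{2}$), while a $*$-proof of $\neg\psi$ at a stage $\le\max(y_1,y_2)$, together with the standard $T$-proofs recovered via clause~(2) from $y_1$ and $y_2$, would yield ${\sf Prov}_T(\gdl{0\neq 0})$, a configuration the scheduling of Rule ($\D{2}$) must be designed to preclude below $y_3$; here one uses that the ambient theory is $T$, so that the pertinent $\Sigma_1$ facts about the construction are available. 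The argument for $\CB$ is identical with universal instantiation in place of modus ponens.

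The hard part is the design and verification of the scheduling in Rules ($\D{2}$) and ($\CB$): the discharge stages must be arranged so that no $*$-proof of a negation $\neg\psi$ (respectively $\neg\varphi(\vec{\overline n})$) can appear early enough to spoil the Rosser-validity of the $*$-proof produced by the closure rule, while simultaneously respecting the single-conclusion constraint, the (DCG) priorities, and primitive recursiveness, and while keeping the propagation argument formalisable in $T$. Unlike the $\Delta_0$ case, provability of $\varphi\to\psi$ and of $\varphi$ is undecidable, so this cannot be enforced by an a priori veto and must instead be controlled through the relative placement of $*$-proofs, following Arai's method with the modifications forced by our formulation of Rosser predicates.
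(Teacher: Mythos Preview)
Your approach has a concrete flaw in the handling of $\DCG$ and a structural gap in the handling of $\D{2}$ and $\CB$. For $\DCG$ you propose an absolute veto: for a true $\Delta_0$ sentence $\sigma$, the formula $\neg\sigma$ is never assigned a $*$-proof, and you then invoke $\PA\vdash\forall z\,\neg\Prf_T(\gdl{\neg\sigma},z)$. But take $\sigma\equiv(0{=}0)$. If your $\Prf_T$ also satisfies clause~(2) of being a proof predicate, namely $\PA\vdash\forall x({\sf Prov}_T(x)\leftrightarrow\exists y\,\Prf_T(x,y))$, then combining the two gives $\PA\vdash\neg{\sf Prov}_T(\gdl{0\neq0})$, i.e.\ $\PA\vdash\Con_T$, contradicting Theorem~\ref{G2}. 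So the veto and clause~(2) are incompatible; your own justification of clause~(2) (``every $T$-theorem eventually does'') already fails in models of $\PA$ where $T$ is inconsistent, since there $\neg\sigma$ is a $T$-theorem that your construction refuses to $*$-prove.

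For $\D{2}$ your scheduling must contend with the scenario in which $\neg\psi$ already received a $*$-proof at some stage $\le\max(y_1,y_2)$, before the obligation to $*$-prove $\psi$ was even created. Your blocking clause only looks forward, so it cannot erase that earlier $*$-proof, and your response---that this configuration entails ${\sf Prov}_T(\gdl{0\neq0})$---does not close the gap: the implication must hold in \emph{all} models of $T$, including those where $T$ is inconsistent, and in such models the bad configuration is entirely possible. You yourself flag this as ``the hard part'' and leave the design open. The paper's construction sidesteps dynamic scheduling altogether. Procedure~1 outputs genuine $T$-proofs while ${\sf Sat}(m)$ holds; the moment ${\sf Sat}(m)$ fails, Procedure~2 fixes once and for all a propositional truth assignment $V$ on $F_n$ (with $n=d(P_{T,m-1})$) that models $P_{T,m-1}$ and satisfies conditions~(A) and~(B), and then outputs formulas in an order dictated by a notion of ``critical'' formula derived from $V$. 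Because $V$ is a truth assignment, $V(\varphi\to\psi)=V(\varphi)=1$ forces $V(\psi)=1$, which yields $\D{2}$; condition~(A) yields $\CB$; condition~(B) yields $\DCG$. Lemma~\ref{DCL2} guarantees, working in $T$, that $n$ exceeds any standard G\"odel number, which is why $\CB$ and $\D{2}$ are proved in $T$ rather than $\PA$. The key idea you are missing is precisely this: commit to a single finite propositional model at the moment inconsistency is detected, and let it determine the Rosser order, rather than trying to maintain closure by reactive bookkeeping.
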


The last theorem is an improvement of Theorem \ref{NER2}. 

\begin{theorem}\label{DCT3}
There exists a Rosser provability predicate of $T$ satisfying $\CB$, $\BD{2}$, $\DG{3}$ and $\DCG$. 
That is, there exists a Rosser provability predicate $\PRR_3(x)$ of $T$ satisfying the following conditions: 
\begin{enumerate}
	\item $T \vdash \PRR_3(\gdl{\forall \vec{x}\, \varphi(\vec{x})}) \to \forall \vec{x}\, \PRR_3(\gdl{\varphi(\vec{\dot{x}})})$ for any formula $\varphi(\vec{x})$. 
	\item For any formulas $\varphi$ and $\psi$, if $T \vdash \varphi \to \psi$, then $T \vdash \PRR_3(\gdl{\varphi}) \to \PRR_3(\gdl{\psi})$. 
	\item $\PA \vdash \forall x (\PRR_3(x) \to \PRR_3(\gdl{\PRR_3(\dot{x})}))$. 
	\item $\PA \vdash \forall x({\sf True}_{\Delta_0}(x) \to \PRR_3(x))$. 
\end{enumerate}
\end{theorem}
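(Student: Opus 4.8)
The plan is to construct a primitive recursive, single-conclusion proof predicate $\Prf_3(x,y)$ of $T$ by a stage-by-stage reorganization and enrichment of G\"odel's proof predicate, and to take its Rosser provability predicate
\[
\PRR_3(x)\ :\equiv\ \exists y\left(\Prf_3(x,y)\wedge \forall z\le y\,\neg\Prf_3(\dot{\neg}x,z)\right).
\]
Since condition~3 ($\DG{3}$) forces the defining clauses of $\Prf_3$ to refer to $\PRR_3$ itself, the definition is obtained via the recursion theorem. The construction follows Arai's proof of Theorem~\ref{NER2}, with additional bookkeeping for the three further conditions. Enumerate the ordinary $T$-proofs as $d_0,d_1,\dots$, where $d_s$ is the proof of G\"odel number $s$, so that the G\"odel number of any proof bounds the G\"odel number of its conclusion. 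At stage $s$ we first put, in order of increasing G\"odel number and with the smallest codes of the stage, a $\Prf_3$-proof of every true $\Delta_0$ sentence of G\"odel number below a bound $b(s)\to\infty$ (for $\DCG$); then a $\Prf_3$-proof of the conclusion of $d_s$; and then we run a fixed primitive recursive closure routine that appends further $\Prf_3$-proofs, with codes packed tightly after the proof that triggers them, for: (i) every numerical instance $\varphi(\overline{n_1},\dots,\overline{n_k})$ of a sentence $\forall\vec{x}\,\varphi$ that already has a $\Prf_3$-proof (for $\CB$); (ii) $\psi$, whenever $\chi$ already has a $\Prf_3$-proof and an ordinary proof of $\chi\to\psi$ has appeared by stage $s$ (for $\BD{2}$); (iii) $\PRR_3(\gdl{\chi})$, whenever $\chi$ already has a $\Prf_3$-proof, iterated (for $\DG{3}$). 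Codes are allocated so that $\Prf_3$ is single-conclusion, $\Delta_1(\PA)$, and primitive recursive.

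First I would check that $\PRR_3$ is a genuine Rosser provability predicate of $T$: each of the operations above turns sentences provable in $T$ into sentences provable in $T$ (and, once formalized, turns $M$-provable sentences into $M$-provable sentences for every $M\models\PA$, since $T\supseteq\PA$ and true $\Delta_0$ sentences are $T$-theorems), so $\Prf_3$ proves exactly the theorems of $T$ and satisfies the clauses of a proof predicate. Condition~4 ($\DCG$) follows: because at stage $s$ the true $\Delta_0$ sentences below $b(s)$ are proved first and with the smallest codes, while a $\Prf_3$-proof of a \emph{false} $\Delta_0$ sentence $\delta$ can be produced only at a stage $s$ with $\gdl{\delta}<b(s)$ (any datum triggering such a proof has G\"odel number at least $\gdl{\delta}$), every $\Prf_3$-proof of a false $\Delta_0$ sentence is preceded, with a strictly smaller code, by a $\Prf_3$-proof of its negation; hence every true $\Delta_0$ sentence is Rosser-provable, and this is verifiable by the bounded reasoning available in $\PA$, giving $\PA\vdash\forall x({\sf True}_{\Delta_0}(x)\to\PRR_3(x))$.

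Conditions 1--3 ($\CB$, $\BD{2}$, $\DG{3}$) are obtained from the crucial feature that the derived proofs in (i)--(iii) receive codes only marginally larger than the proofs that trigger them, so that the Rosser-blocking clause $\forall z\le y\,\neg\Prf_3(\dot{\neg}x,z)$ is inherited: along universal instantiation, for $\CB$; along a provable implication $\varphi\to\psi$, using that the contrapositive $\neg\psi\to\neg\varphi$ is also $T$-provable, so that a would-be short $\Prf_3$-proof of $\neg\psi$ generates a short $\Prf_3$-proof of $\neg\varphi$, for $\BD{2}$; and along the passage $\chi\mapsto\PRR_3(\gdl{\chi})$, the most delicate case, where one also uses Proposition~\ref{RP} to locate the $\Prf_3$-proofs of the relevant negations, for $\DG{3}$. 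This is essentially Arai's argument, and the point is to carry it out so that each of these implications can be formalized in $T$ (for conditions 1 and 2, which assert $T\vdash\cdots$), resp.\ in $\PA$ (for condition 3); note that, in view of Corollary~\ref{D2D3}, it is only the weakened form $\BD{2}$ (not $\D{2}$) that can, and needs to, coexist with $\DG{3}$ here.

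The main obstacle is precisely this coordination: arranging a single primitive recursive, stage-by-stage construction whose tight code-packing makes the Rosser-blocking propagate simultaneously through all of (i)--(iii), while the $\Delta_0$-priority batches and the $\CB$-instantiation batches are interleaved with Arai's $\BD{2}$- and $\DG{3}$-machinery without disturbing any of the propagation arguments, and then verifying that the resulting conditions are provable at the required level. The delicacy is compounded by the absence of negation normal form (which in Arai's original setting made the Hilbert--Bernays-form consistency statement automatically provable, in place of $\DCG$), so that here the Hilbert--Bernays-type content has to be extracted in the weaker form $\DCG$ and re-coordinated with everything else.
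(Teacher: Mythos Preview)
Your proposal has a genuine gap in the propagation argument. You claim that the Rosser-blocking clause is ``inherited'' along the closure operations because derived proofs receive codes ``only marginally larger'' than their triggers, but this does not survive scrutiny. For $\BD{2}$: if $\varphi$ has a $\Prf_3$-proof at code $p$ and closure (ii) gives $\psi$ a proof at $p'>p$, you need any $\Prf_3$-proof of $\neg\psi$ at code $q\le p'$ to force, via closure on the contrapositive $\neg\psi\to\neg\varphi$, a $\Prf_3$-proof of $\neg\varphi$ at some code ${\le}\,p$. But closure only produces proofs with \emph{larger} codes than their triggers, and the ordinary $T$-proof of $\neg\psi\to\neg\varphi$ may appear at a different stage from that of $\varphi\to\psi$; nothing forces the derived proof of $\neg\varphi$ to land below $p$. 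The same obstruction hits $\CB$: your closure (i) only goes from universals to instances, so a $\Prf_3$-proof of $\neg\varphi(\vec{\overline a})$ does not by itself produce a short $\Prf_3$-proof of $\neg\forall\vec x\,\varphi(\vec x)$; routing it through (ii) would require the ordinary proofs of the infinitely many implications $\neg\varphi(\vec{\overline a})\to\neg\forall\vec x\,\varphi$ to appear at stages you cannot bound uniformly in $\vec a$. Separately, closure (i) produces infinitely many instances and closure (iii) is to be ``iterated'', so a single stage is not a finite primitive-recursive computation as described.

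The paper's proof uses a completely different mechanism that sidesteps all of this. It does \emph{not} enrich the enumeration during normal operation; $g_3$ copies G\"odel's proofs verbatim until a ``bell'' rings (provably equivalent to $\neg\Con_T$). The real work is done by finite sets $X_m$ of \emph{negated} formulas, defined as the closure of $\{\neg\varphi:\neg\varphi\in P_{T,m}\}$ under precisely the contrapositive moves you are missing: from $\neg\varphi'$ with $\varphi'$ an instance of $\varphi$ to $\neg\varphi$ (provided $\neg\varphi\in F_m$), and from $\neg\psi$ together with $\neg\psi\to\neg\varphi\in P_{T,m}$ to $\neg\varphi$. The bell-ringing test also refers to $\PRR_3$ itself and to $\Delta_0$-truth (via the recursion theorem). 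When the bell rings at stage $m$, Procedure~2 first outputs all of $X_{m-1}$ and then enumerates all formulas in order. The key lemma (Claim~1) is the clean equivalence $\neg\varphi\in X_{m-1}\Longleftrightarrow\neg\PRR_3(\gdl{\varphi})$, from which $\CB$, $\BD{2}$, $\DG{3}$ and $\DCG$ follow directly from the closure clauses defining $X_{m-1}$ and the bell conditions, with no code-packing at all. In the ``bell never rings'' case one has $\Con_T$, whence $\PRR_3$ coincides with ${\sf Prov}_T$ and the conditions are inherited from G\"odel's predicate. This dichotomy, not a continuous enrichment, is what makes the argument formalizable at the required level.
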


\begin{remark}
Notice that both of $\PRR_2(x)$ and $\PRR_3(x)$ satisfy the Hilbert-Bernays derivability conditions $\BD{2}$, $\CB$ and $\DCU$. 
Thus by Theorem \ref{HB}, $T$ can prove neither $\Con_{\PRR_2}^H$ nor $\Con_{\PRR_3}^H$. 
Therefore our Theorems \ref{DCT2} and \ref{DCT3} cannot be proved under Arai's assumption that formulas are formulated in negation normal form. 
Moreover, since $\PRR_3(x)$ satisfies $\BD{2}$ and $\D{3}$, $T \nvdash \Con_{\PRR_3}^S$ by Theorem \ref{KurG2}. 

By Proposition \ref{RP2}, $T$ proves $\Con_{\PRR_2}^L$ and $\Con_{\PRR_3}^L$. 
Since $\PRR_2(x)$ satisfies $\D{2}$, $T$ can also prove $\Con_{\PRR_2}^S$. 
Therefore our Rosser provability predicates $\PRR_2(x)$ and $\PRR_3(x)$ indicate the difference of the unprovability of three consistency statements $\Con_{\PR_T}^H$, $\Con_{\PR_T}^S$ and $\Con_{\PR_T}^L$. 
The following table summarizes the situation of the unprovability of consistency statements. 

\begin{table}[h]
\begin{center}
\begin{tabular}{|c|c|c|c|}
\hline
 & $T \vdash \Con_{\PR_T}^H$ & $T \vdash \Con_{\PR_T}^S$ & $T \vdash \Con_{\PR_T}^L$ \\
\hline
$\PRR_1(x)$ & Yes & Yes & Yes \\
\hline
$\PRR_2(x)$ & No & Yes & Yes \\
\hline
$\PRR_3(x)$ & No & No & Yes \\
\hline
\end{tabular}
\end{center}
\end{table}
\end{remark}

We obtain the following corollary for provability predicates. 

\begin{corollary}\label{MTC}\leavevmode
\begin{enumerate}
	\item $\{\D{1}, \DG{2}, \DCG\}$ does not imply any of $\DU{1}$, $\D{3}$ or $T \nvdash \Con_{\PR_T}^H$. 
	\item $\{\D{1}, \CB, \D{2}, \DCG\}$ does not imply any of $\DU{2}$, $\D{3}$ or $T \nvdash \Con_{\PR_T}^S$. 
	\item $\{\D{1}, \CB, \BD{2}, \DG{3}, \DCG\}$ does not imply any of $\D{2}$, $\SC$ or $T \nvdash \Con_{\PR_T}^L$. 
\end{enumerate}
\end{corollary}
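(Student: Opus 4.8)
\textbf{Proof proposal for Corollary \ref{MTC}.}

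The plan is to read off all nine non-implications directly from the three main theorems together with the negative results already established in Sections \ref{sec-dc} and \ref{sec-rpp}; no new construction is needed. For each clause I would first note that the relevant Rosser provability predicate $\PRR_i(x)$ is a $\Sigma_1$ provability predicate of $T$, hence automatically satisfies $\D{1}$, and that it satisfies the hypotheses listed in the clause by the corresponding parts of Theorem \ref{DCT1}, \ref{DCT2}, or \ref{DCT3} (using that local conditions such as $\D{2}$, $\BD{2}$, $\CB$ follow from their uniform or global counterparts, and that $\DG{2}$ implies $\D{2}$). It then remains to exhibit, for each of the three conclusions in the clause, a reason why $\PRR_i(x)$ fails it.

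For clause 1, take $\PRR_1(x)$ from Theorem \ref{DCT1}. It satisfies $\D{1}$, $\DG{2}$ and $\DCG$. It fails $\D{3}$: by Corollary \ref{D2D3} no Rosser provability predicate can satisfy both $\D{2}$ and $\D{3}$, and $\PRR_1$ satisfies $\D{2}$ (a consequence of $\DG{2}$), so it cannot satisfy $\D{3}$. It fails $\DU{1}$: by Corollary \ref{BK}.2 no Rosser provability predicate satisfies both $\DU{1}$ and $\DU{2}$; if $\PRR_1$ satisfied $\DU{1}$, then together with $\DU{2}$ (which follows since $\PRR_1$ satisfies $\DG{2}$) this would contradict Corollary \ref{BK}.2 — alternatively, $\DU{1}$ plus $\BDU{2}$ would give $\SCU$ by Theorem \ref{TK}, contradicting Corollary \ref{BK}.1, noting $\BDU{2}$ follows from $\DU{1}$ and $\DU{2}$ by Proposition \ref{DCP}.4. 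Finally, clause 3 of Theorem \ref{DCT1} states $\PA \vdash \Con_{\PRR_1}^H$, so in particular $T \vdash \Con_{\PRR_1}^H$, giving the failure of $T \nvdash \Con_{\PR_T}^H$.

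For clause 2, take $\PRR_2(x)$ from Theorem \ref{DCT2}, which satisfies $\D{1}$, $\CB$, $\D{2}$ and $\DCG$. It fails $\D{3}$ again by Corollary \ref{D2D3} since it satisfies $\D{2}$. It fails $\DU{2}$: if it satisfied $\DU{2}$, then since it satisfies $\DU{1}$ (by Proposition \ref{DCP}.5, from $\D{1}$ and $\CB$) it would contradict Corollary \ref{BK}.2. It fails $T \nvdash \Con_{\PR_T}^S$: since $\PRR_2$ satisfies $\D{2}$, the Remark after Theorem \ref{DCT3} observes $T \vdash \Con_{\PRR_2}^S$ (indeed $\Con_{\PRR_2}^S$ follows from $\Con_{\PRR_2}^L$, which holds by Proposition \ref{RP2}). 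For clause 3, take $\PRR_3(x)$ from Theorem \ref{DCT3}, satisfying $\D{1}$, $\CB$, $\BD{2}$, $\DG{3}$ (hence $\D{3}$) and $\DCG$. It fails $\D{2}$: if it satisfied $\D{2}$, then with $\D{3}$ this contradicts Corollary \ref{D2D3}. It fails $\SC$: by Proposition \ref{RSC} no Rosser provability predicate satisfies $\SC$. It fails $T \nvdash \Con_{\PR_T}^L$ by Proposition \ref{RP2}, which gives $\PA \vdash \Con_{\PRR_3}^L$, hence $T \vdash \Con_{\PRR_3}^L$. This completes all nine cases. The only mild subtlety — the ``main obstacle'', such as it is — is bookkeeping: making sure in each case that the hypotheses genuinely hold (tracking which global/uniform conditions imply the local ones invoked) and that the negative results cited apply to single-conclusion Rosser predicates, which they do since Propositions \ref{RSC}, \ref{RP2} and Corollaries \ref{D2D3}, \ref{BK} were all stated for arbitrary Rosser provability predicates of $T$.
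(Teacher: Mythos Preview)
Your proposal is correct and is exactly the intended argument: the paper states the corollary without proof, and it is meant to be read off from Theorems \ref{DCT1}--\ref{DCT3} together with the limitation results of Section \ref{sec-rpp} (Propositions \ref{RP2}, \ref{RSC} and Corollaries \ref{D2D3}, \ref{BK}) and the easy implications in Proposition \ref{DCP}, precisely as you do. The only implicit step you rely on that the paper never spells out is that global conditions entail their uniform and local counterparts (e.g.\ $\DG{2}\Rightarrow\DU{2}\Rightarrow\D{2}$), but this is immediate from the definitions.
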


Corollary \ref{MTC} shows that $\Con_{\PR_T}^H$ and $\Con_{\PR_T}^S$ cannot be replaced by $\Con_{\PR_T}^S$ and $\Con_{\PR_T}^L$ in the statements of Theorems \ref{HB} and \ref{KurG2}, respectively. 

\begin{remark}
Since the set of all $\Delta_0$ sentences are closed under negation, $\PA \vdash \forall x(\Delta_0(x) \land \neg {\sf True}_{\Delta_0}(x) \to {\sf True}_{\Delta_0}(\dot{\neg}x))$ where $\Delta_0(x)$ is a formula naturally representing the set of all $\Delta_0$ sentences. By $\DCG$ for $\PRR_1(x)$, we have that $\PA \vdash \forall x(\Delta_0(x) \land \neg {\sf True}_{\Delta_0}(x) \to \PRR_1(\dot{\neg} x))$. 
Since $\Con_{\PRR_1}^H$ is provable in $\PA$, we obtain $\PA \vdash \forall x(\Delta_0(x) \land \neg {\sf True}_{\Delta_0}(x) \to \neg \PRR_1(x))$. 
Therefore the uniform $\Delta_0$ reflection principle $\forall x(\Delta_0(x) \land \PRR_1(x) \to {\sf True}_{\Delta_0}(x))$ for $\PRR_1(x)$ is provable in $\PA$.\footnote{This is pointed out by the referee.}
\end{remark}

Before proving our results, we introduce some terminology and prove a lemma. 
We assume that our logical symbols are only $\land, \neg$ and $\forall$, and other logical symbols such as $\to$ and $\exists$ are introduced as abbreviations. 
We say a formula $\varphi'$ is an {\it instance} of a formula $\varphi$ if for some numbers $k, i_0, \ldots, i_{k-1}$, some variables $x_0, \ldots, x_{k-1}$ and some formula $\psi$, the formulas $\varphi$ and $\varphi'$ are of the forms $\forall x_0 \cdots \forall x_{k-1} \psi(x_0, \ldots, x_{k-1})$ and $\psi(\overline{i_0}, \ldots, \overline{i_{k-1}})$, respectively. 
For each natural number $m$, let $F_m$ be the set of all formulas whose G\"odel numbers are less than or equal to $m$. 
We say that a finite mapping $V : F_m \to \{0, 1\}$ is a {\em truth assignment on $F_m$} if $V$ satisfies the usual conditions of truth assignments for propositional logic such as $V(\varphi \land \psi) = V(\varphi) \cdot V(\psi)$, $V(\neg \varphi) = 1 - V(\varphi)$, and so on. 
Let $X$ be any finite set of formulas. 
Let $d(X) = \min \{n' : X \subseteq F_{n'}\}$. 
A truth assignment $V$ on $F_m$ is said to be a {\em model of $X$} if $d(X) \leq m$ and $V(\varphi) = 1$ for all $\varphi \in X$. 
%We say $X$ is {\em propositionally satisfiable} if there exists a model of $X$. 
%A formula $\psi$ is said to be a {\em tautological consequence (t.c.) of $X$} if $\psi$ is a Boolean combination of formulas contained in $F_{d(X)}$ and for any model $V$ of $F_{d(X)}$, the truth value of $\psi$ based on $V$ is $1$. 
Let $P_{T, m}$ be the finite set $\{\varphi : \N \models \exists y \leq \overline{m}\, {\sf Proof}_T(\gdl{\varphi}, y)\}$ of formulas. 

We introduce two conditions (A) and (B) about truth assignments $V$ on $F_m$. 

%Then $\PA$ can prove several facts about truth assignments. 
%For instance, $\PA$ proves that ``If $X \cup \{\varphi\}$ is not satisfiable for a finite set $X$ of formulas and a formula $\varphi$, then $\neg \varphi$ is a t.c.~of $X$.''
%Let $\varphi$ be any formula and let $\psi_0, \ldots, \psi_{k-1}$ be the finite sequence of all atomic or existential subformulas of $\psi$ such that if $i < j$, then ${\sf gn}(\psi_i) < {\sf gn}(\psi_j)$. 
%Such a sequence is called the {\em subformula sequence of $\varphi$}. 
%Then $\varphi$ is a boolean combination of $\psi_0, \ldots, \psi_{k-1}$, and the truth value of $\varphi$ is determined when the truth values of $\psi_0, \ldots, \psi_{k-1}$ are determined. 
%We say that $\varphi$ is true under $(v_0, \ldots, v_{k-1}) \in \{0, 1\}^k$ if the truth value of $\varphi$ is $1$ when the truth value of $\psi_i$ is $v_i$ for each $i < k$. 
%Let $tr(\varphi) = \{(v_0, \ldots, v_{k-1}) \in \{0, 1\}^k : \varphi$ is true under $(v_0, \ldots, v_{k-1})\}$. 

\begin{itemize}
	\item (A): For any formulas $\varphi, \varphi' \in F_m$, if $V(\varphi) = 1$ and $\varphi'$ is an instance of $\varphi$, then $V(\varphi') = 1$.  
%	Moreover, if $\varphi$ is of the form $\neg \psi$ and $\psi(\overline{n})$ is in $F_m$, then $V(\psi(\overline{n})) = 0$. 
	\item (B): If $\varphi \in F_m$ is a $\Delta_0$ sentence, then $\varphi$ is true if and only if $V(\varphi) = 1$. 
\end{itemize}

The above terminology and definitions are formalizable in $\PA$. 
Then we can define a $\Delta_1(\PA)$ formula ${\sf Sat}(u)$ saying that ``there exists a model of $P_{T, u}$ satisfying the conditions (A) and (B)'' by using the $\Delta_1(\PA)$ formula ${\sf True}_{\Delta_0}(x)$. 
%Notice that $\PA \vdash {\sf Sat}(\overline{m})$ for all natural numbers $m$. 
We prove the following lemma.

\begin{lemma}\label{DCL1}
$\PA \vdash \Con_T \leftrightarrow \forall u {\sf Sat}(u)$. 
\end{lemma}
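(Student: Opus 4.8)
The plan is to prove the two directions of the equivalence separately, reasoning inside $\PA$.

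For the implication $\forall u\,{\sf Sat}(u) \to \Con_T$ I would argue by contraposition. Working in $\PA$, suppose $T$ is inconsistent. Then ${\sf Proof}_T(\gdl{0 \neq 0}, u_0)$ holds for some $u_0$, so $0 \neq 0 \in P_{T, u_0}$; since $0 \neq 0$ is a $\Delta_0$ sentence it lies in $F_{d(P_{T, u_0})}$. Now if $V$ were a model of $P_{T, u_0}$ satisfying (A) and (B), then $V(0 \neq 0) = 1$ because $0 \neq 0 \in P_{T, u_0}$, while (B) together with $\neg {\sf True}_{\Delta_0}(\gdl{0 \neq 0})$ forces $V(0 \neq 0) = 0$ --- a contradiction. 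Hence $\neg {\sf Sat}(u_0)$, so $\neg \forall u\,{\sf Sat}(u)$.

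For the converse I would invoke the arithmetized completeness theorem. Working in $\PA$, assume $\Con_T$ and fix $u$; put $m = d(P_{T, u})$. Let $\delta_m$ be the (provably finite) conjunction consisting of all true $\Delta_0$ sentences with G\"odel number at most $m$ together with the negations of all false such sentences. By formalized $\Sigma_1$-completeness of $T$ we have $T \vdash \delta_m$, and clearly $T \vdash \bigwedge P_{T, u}$, so $T$ proves the single sentence $\theta_u :\equiv \bigwedge P_{T, u} \wedge \delta_m$, where, in order to make $\theta_u$ a sentence, I replace each variable occurring freely in $P_{T, u}$ by a fresh constant. Since $\Con_T$ holds and $T \vdash \theta_u$, the sentence $\theta_u$ is consistent, so by the arithmetized completeness theorem it has a coded model $M$. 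Define $V \colon F_m \to \{0, 1\}$ by $V(\varphi) = 1$ iff $M \models \varphi$ (interpreting any free variable of $\varphi$ by the corresponding fresh constant). Then $V$ is a truth assignment on $F_m$ since satisfaction in $M$ respects $\wedge$ and $\neg$; $V(\varphi) = 1$ for every $\varphi \in P_{T, u}$ since $M \models \theta_u$; condition (A) holds because if $M \models \forall x_0 \cdots \forall x_{k-1}\, \psi$ then, instantiating at the values of the numerals $\overline{i_0}, \ldots, \overline{i_{k-1}}$, we obtain $M \models \psi(\overline{i_0}, \ldots, \overline{i_{k-1}})$; and condition (B) holds because $M \models \delta_m$ decides every $\Delta_0$ sentence of $F_m$ according to its actual truth. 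Thus ${\sf Sat}(u)$, and since $u$ was arbitrary, $\forall u\,{\sf Sat}(u)$.

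The ingredients I would treat as routine are formalized $\Sigma_1$-completeness (to get $T \vdash \delta_m$ uniformly in $m$) and the $\PA$-provable fact that a theory proving a refutable sentence is inconsistent (to pass from $\Con_T$ to the consistency of $\theta_u$). The step I expect to be the main obstacle is the use of the arithmetized completeness theorem together with the bookkeeping needed to read off a genuine Boolean truth assignment on $F_m$ satisfying (A) and (B) --- in particular the treatment of formulas in $P_{T, u}$ containing free variables, which is precisely the reason for passing to the expansion by constants when forming $\theta_u$.
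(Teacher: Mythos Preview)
Your proof is correct and follows the same overall strategy as the paper: contraposition for the easy direction, arithmetized completeness for the hard one. The paper's execution of the $(\to)$ direction is more streamlined, however. Rather than building, for each $u$, a tailored sentence $\theta_u$ and taking a model of it, the paper invokes the arithmetized completeness theorem once to obtain a definable complete consistent extension $T'$ of $T$, and then simply sets $V(\varphi) = 1$ iff $\varphi \in T'$. Since $T' \supseteq T$, all of $P_{T,u}$ lies in $T'$ automatically; condition (A) follows from closure of $T'$ under logical consequence (if $\varphi \in T'$ and $\varphi \to \varphi'$ is logically valid then $\varphi' \in T'$); and condition (B) follows because every true $\Delta_0$ sentence is already a theorem of $T$ (by $\DCG$ for ${\sf Prov}_T$) and hence in $T'$, while the consistency of $T'$ excludes the false ones. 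This route avoids both your auxiliary conjunct $\delta_m$ and the constant-expansion bookkeeping for free variables: those devices are correct, but they are only needed because you chose to model a finite sentence rather than the whole theory. Working directly with the completion of $T$ makes the verification of (A) and (B) a one-liner.
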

\begin{proof}
We reason in $\PA$. 

$(\leftarrow)$: Suppose $\neg \Con_T$, then there exists a number $m$ such that $P_{T, m}$ contains both $0 = 0$ and $0 \neq 0$. 
Obviously $P_{T, m}$ has no model. 
Therefore $\neg {\sf Sat}(m)$ holds for some $m$. 

$(\rightarrow)$: Suppose $\Con_T$. 
Then there is a definable complete consistent extension $T'$ of $T$ by the arithmetized completeness theorem (cf.~H\'ajek and Pudl\'ak \cite{HP93}). 
Let $m$ be any number. 
We define a finite mapping $V : F_m \to \{0, 1\}$ as follows: for every $\varphi \in F_m$, $V(\varphi) = 1$ if and only if $\varphi \in T'$. 
Since $T'$ is complete and consistent, $V$ is a truth assignment on $F_m$. 

We prove that $V$ satisfies the condition (A). 
Let $\varphi$ and $\varphi'$ be any formulas in $F_m$ with $V(\varphi)=1$ and $\varphi'$ is an instance of $\varphi$. 
Then $\varphi \in T'$. 
Since $\varphi \to \varphi'$ is logically valid, $\varphi' \in T'$. 
Therefore $V(\varphi') = 1$. 

We prove that $V$ satisfies the condition (B). 
Let $\varphi \in F_m$ be any $\Delta_0$ sentence. 
If $\varphi$ is true, then $\varphi$ is provable in $T$ by $\DCG$ for ${\sf Prov}_T(x)$. 
Thus $\varphi \in T'$ and hence $V(\varphi) = 1$. 
If $\varphi$ is false, then $\neg \varphi$ is a true $\Delta_0$ sentence and is provable in $T$. 
By the consistency of $T'$, $\varphi \notin T'$. 
Therefore $V(\varphi) = 0$. 

Also since $T'$ is an extension of $T$, $V$ is a model of $P_{T, m}$. 
We conclude that ${\sf Sat}(m)$ holds. 
\end{proof}

Notice that $\PA$ is essentially reflexive, that is, every consistent extension of $\PA$ can prove the consistency of every finite subtheory of itself (cf.~\cite{Lin03}). 
Thus in a similar way as in our proof of Lemma \ref{DCL1}, we obtain the following lemma.

\begin{lemma}\label{DCL2}
For every natural number $m$, $T \vdash {\sf Sat}(\overline{m})$. 
\end{lemma}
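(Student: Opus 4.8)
The plan is to mimic the proof of Lemma~\ref{DCL1}, replacing the appeal to $\Con_T$ (which $T$ cannot prove) by the essential reflexivity mentioned just above: $T$, being a consistent extension of $\PA$, proves $\Con(S)$ for each of its finite subtheories $S$. So first I would fix the natural number $m$ and choose, \emph{depending on $m$}, a suitable finite subtheory $S$ of $T$: one which proves every element of the finite set $P_{T,m}$ and which extends a fixed finite $\Delta_0$-complete fragment of $\PA$ (for instance the subtheory of $T$ axiomatized by the $T$-axioms of G\"odel number $\leq m$ together with Robinson arithmetic works, since any $T$-proof of code $\leq m$ uses only formulas and axioms of code $\leq m$). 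Then $T \vdash \Con(S)$. I would also note that, by $\Sigma_1$-completeness, $T$ proves that $P_{T,\overline{m}}$ is exactly the explicit finite set $P_{T,m}$, that $P_{T,\overline m}\subseteq F_{\overline m}$, and that $S$ proves each element of $P_{T,\overline m}$.

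Next I would run the model construction inside $T$, exactly as in Lemma~\ref{DCL1} but with $S$ and a complete consistent extension of $S$ in place of $T$ and $T'$. Reasoning in $T$: from $\Con(S)$ and the arithmetized completeness theorem we get a definable complete consistent extension $S'$ of $S$; define $V:F_{\overline m}\to\{0,1\}$ by $V(\varphi)=1$ iff $\varphi\in S'$. Since $S'$ is complete and consistent and $F_{\overline m}$ is closed under subformulas, $V$ is a truth assignment on $F_{\overline m}$. It is a model of $P_{T,\overline m}$ because $S'$ proves every element of $P_{T,\overline m}$. Condition (A) holds: if $V(\varphi)=1$ and $\varphi'\in F_{\overline m}$ is an instance of $\varphi$, then $\varphi\in S'$ and $\varphi\to\varphi'$ is logically valid, so $\varphi'\in S'$. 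Condition (B) holds: a true $\Delta_0$ sentence in $F_{\overline m}$ is provable in $S$ by $\Delta_0$-completeness, hence lies in $S'$; a false one has true $\Delta_0$ negation, which lies in $S'$, so the sentence itself does not, by consistency of $S'$. Thus ${\sf Sat}(\overline m)$ holds, and since the whole argument is formalized in $T$, we get $T\vdash{\sf Sat}(\overline m)$.

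The main obstacle — really the only non-bookkeeping point — is the use of essential reflexivity: one must invoke that $T$ itself (not merely $\PA$) proves $\Con(S)$ for every finite subtheory $S$ of $T$, and one must be sure the chosen $S$ is genuinely a finite subtheory of $T$ while still being strong enough to prove all members of $P_{T,m}$ and to be $\Delta_0$-complete. The remaining ingredients — that $T$ correctly computes the finite set $P_{T,\overline m}$ and verifies $S\vdash\varphi$ for each $\varphi\in P_{T,\overline m}$ — are routine consequences of $\Sigma_1$-completeness, and the verification that $V$ is a model satisfying (A) and (B) is identical to the corresponding part of the proof of Lemma~\ref{DCL1}.
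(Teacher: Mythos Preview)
Your proposal is correct and follows essentially the same approach as the paper: the paper gives no detailed proof, merely remarking that essential reflexivity of $T$ lets one run the argument of Lemma~\ref{DCL1} with a finite subtheory in place of $T$, and this is precisely what you do (with considerably more care about the choice of $S$ and the bookkeeping inside $T$).
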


\subsection{Proof of Theorem \ref{DCT1}}

In this subsection, we prove Theorem \ref{DCT1}. 
For each formula $\varphi$, we define the formula $-\varphi$ as follows: 
\[
	- \varphi : \equiv \begin{cases}
	\varphi & \text{if}\ \varphi\ \text{is not of the form}\ \neg \psi, \\
	\psi & \text{if}\ \varphi\ \text{is of the form}\ \neg \psi.
	\end{cases}
\]

Let $\{\xi_k\}_{k \in \omega}$ be the effective enumeration of all formulas introduced in Section \ref{sec-dc}. 
Notice that if $\xi_k$ is $- \xi_l$, then $\xi_k$ is a subformula of $\xi_l$, and hence $k \leq l$. 
Therefore $\xi_k$ is none of $-\xi_l$ for all $l < k$. 
This property will be used in our proofs. 

We define a primitive recursive function $e(\varphi, V, n)$ as follows: 
\begin{itemize}
	\item If $V$ is not a (code of) truth assignment on $F_n$, $e(\varphi, V, n) = 0$. 
	\item If $V$ is a truth assignment on $F_n$, then the value of $e(\varphi, V, n)$ is defined as follows by recursion on the construction of $\varphi$: 
	\begin{enumerate}
	\item If $\varphi$ is an atomic formula or a universal formula, 
\[
	e(\varphi, V, n) := \begin{cases}
	V(\varphi) & \text{if}\ \varphi \in F_n, \\
	1 & \text{if}\ \varphi \notin F_n\ \&\ \varphi\ \text{is a true}\ \Delta_0\ \text{sentence}, \\
	0 & \text{if}\ \varphi \notin F_n\ \&\ \varphi\ \text{is a false}\ \Delta_0\ \text{sentence}, \\
	1 & \text{otherwise}.
	\end{cases}
\]
	\item If $\varphi$ is $\neg \xi_0$, then $e(\varphi, V, n) := 1 - e(\xi_0, V, n)$. 
	\item If $\varphi$ is $\xi_0 \land \xi_1$, then $e(\varphi, V, n) := e(\xi_0, V, n) \cdot e(\xi_1, V, n)$. 
%	\item For other propositional connectives, the value of $e$ is defined in an obvious way. 
\end{enumerate}
\end{itemize}

Then it can be proved that if $V$ is a truth assignment on $F_n$, then for any formula $\varphi \in F_n$, $V(\varphi) = e(\varphi, V, n)$. 
%Hence, in addition, if $V$ satisfies the condition (B), then for any $\Delta_0$ sentence $\varphi \in F_n$, $\varphi$ is true if and only if $e(\varphi, V, n) = 1$.

Here we state Theorem \ref{DCT1} again.

\setcounter{theorem}{0}

\begin{theorem}
There exists a Rosser provability predicate $\PRR_1(x)$ of $T$ satisfying the following conditions: 
\begin{enumerate}
	\item $\PA \vdash \forall x \forall y(\PRR_1(x \dot{\to} y) \to (\PRR_1(x) \to \PRR_1(y)))$. 
	\item $\PA \vdash \forall x({\sf True}_{\Delta_0}(x) \to \PRR_1(x))$. 
	\item $\PA \vdash \forall x(\PRR_1(x) \to \neg \PRR_1(\dot{\neg}x))$. 
\end{enumerate}
\end{theorem}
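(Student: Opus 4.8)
I will construct the Rosser provability predicate $\PRR_1(x)$ via a primitive recursive proof predicate that, at each stage $y$, refers to the truth assignment $V$ produced by an auxiliary proof search but corrected by the function $e(\varphi,V,n)$; the point of $e$ is that it extends a finite truth assignment $V$ on $F_n$ to a total evaluation of \emph{all} formulas (using $\Delta_0$-truth for $\Delta_0$ sentences outside $F_n$, and defaulting to $1$ elsewhere), so that formulas like $\varphi \mathbin{\dot\to}\psi$ can be evaluated coherently even when $\varphi,\psi$ have large Gödel numbers.  Concretely I expect to define a proof predicate $\Prf_T^1(x,y)$ of the form ``$y$ codes a pair $(p,V)$ where $p$ is a genuine $T$-proof of the formula with Gödel number $x$, $V$ is a model of $P_{T,y}$ satisfying (A) and (B), and $e(\cdot,V,\cdot)$ assigns $1$ to $x$'' — or some close variant tuned so that single-conclusion-ness (clause 4 of the definition of proof predicate) and the representation conditions (clauses 1--3) still hold.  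Then $\PRR_1(x)$ is the associated Rosser predicate $\exists y(\Prf_T^1(x,y)\land\forall z\le y\,\neg\Prf_T^1(\dot\neg x,z))$.  Verifying that $\Prf_T^1$ is a legitimate proof predicate will use Lemma~\ref{DCL2} (so that such witnesses $V$ exist for every $m$, keeping the weak-representation direction intact) and the basic properties of $e$.

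For condition 3 ($\PA\vdash\Con^H_{\PRR_1}$): this is the Rosser-style argument, sharpened because the $V$ encoded in a proof is an actual truth assignment.  Working in $\PA$, suppose $\PRR_1(\gdl{\varphi})$ and $\PRR_1(\dot\neg\gdl{\varphi})$ both hold, with least witnesses $y_0$ and $y_1$ respectively.  By the Rosser clause, $y_0$ cannot see a $\Prf_T^1$-proof of $\dot\neg\gdl\varphi$ below it and vice versa, so $y_0\ne y_1$; WLOG $y_0<y_1$.  Then the witness at stage $y_1$ carries a truth assignment $V$ that, since $y_0\le y_1$ so $\varphi\in P_{T,y_0}\subseteq P_{T,y_1}$ (or because $e(\varphi,V,y_1)=1$ is part of the $y_0$-witness and $V$ extends), must make $\varphi$ true; but the $y_1$-witness also forces $\neg\varphi$ true, contradicting that $V$ (extended by $e$) is a consistent evaluation.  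The design of $e$ and the inclusion of $V$ in the proof code are exactly what makes this go through in $\PA$; I will need to be careful about whether the $y_0$-witness and $y_1$-witness share the same $V$ or merely compatible ones, and will arrange the coding so that the later witness's $V$ dominates.

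For condition 2 ($\PA\vdash\forall x({\sf True}_{\Delta_0}(x)\to\PRR_1(x))$): if $x$ is (the code of) a true $\Delta_0$ sentence, then $x$ is provable in $T$ by $\DCG$ for ${\sf Prov}_T$, so genuine proofs of $x$ exist; moreover any truth assignment $V$ satisfying (B) assigns $1$ to $x$, and $e(x,V,n)=V(x)=1$ once $n$ is large, while $\dot\neg x$ is a false $\Delta_0$ sentence and hence is \emph{not} provable in $T$, so by Proposition~\ref{RP}-type reasoning (or directly) no competing $\Prf_T^1$-proof of $\dot\neg x$ appears at all.  Hence the Rosser clause is vacuously satisfied and $\PRR_1(x)$ holds; I will spell this out using Lemma~\ref{DCL2} to locate a stage $y$ containing such a witness.

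For condition 1 ($\PA\vdash\forall x\forall y(\PRR_1(x\mathbin{\dot\to}y)\to(\PRR_1(x)\to\PRR_1(y)))$): this is the part I expect to be the main obstacle, and it is where $e$ earns its keep.  Reasoning in $\PA$, assume $\PRR_1(x\mathbin{\dot\to}y)$, $\PRR_1(x)$ with witnesses $p,q,r$ and associated truth assignments; take $n$ large enough to see all three and their negations, and pass to the truth assignment $V$ of the largest witness.  Because $V$ is a genuine truth assignment corrected by $e$, we get $e(x\mathbin{\dot\to}y,V,n)=1$ and $e(x,V,n)=1$, hence by the propositional clauses for $e$ (treating $\dot\to$ via $\neg,\land$) that $e(y,V,n)=1$; combined with a genuine $T$-proof of $y$ obtained from modus ponens on the underlying $T$-proofs of $x\to y$ and $x$, this yields a $\Prf_T^1$-proof of $y$.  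One then has to rule out that a $\Prf_T^1$-proof of $\dot\neg y$ appears below it: here I use that such a proof would again carry a compatible $V'$ forcing $\neg y$ true, and then play this against the witnesses for $x\mathbin{\dot\to}y$ and $x$ exactly as in the proof of condition 3, deriving a contradiction from $\PA\vdash\Con^H_{\PRR_1}$.  The delicate points are (i) choosing the coding of proofs so that ``the largest witness's $V$'' is well-defined and dominates the others, (ii) ensuring $\Prf_T^1$ remains $\Delta_1(\PA)$ and single-conclusion after packing a $V$ into each proof, and (iii) handling the bound in the Rosser quantifier uniformly — i.e.\ showing the new proof of $y$ can be taken with a code bounded in terms of the given data, which is needed to compare it against the bound on any purported proof of $\dot\neg y$.
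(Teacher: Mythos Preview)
Your plan has a structural flaw that blocks all three verifications simultaneously: you are trying to attach a truth assignment $V$ to \emph{each individual proof}, but the argument requires a \emph{single global} evaluation.

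First, the proposed $\Prf_T^1$ is not a proof predicate in the paper's sense.  Clause~2 demands $\PA \vdash \forall x({\sf Prov}_T(x) \leftrightarrow \exists y\,\Prf_T^1(x,y))$, and this must hold in particular in $\PA + \neg\Con_T$.  But under $\neg\Con_T$, by Lemma~\ref{DCL1} there is some $m$ with $\neg{\sf Sat}(m)$, so $P_{T,m}$ has no model satisfying (A) and (B); hence any formula whose only $T$-proofs $p$ satisfy $p \ge m$ will have no $\Prf_T^1$-proof at all, while ${\sf Prov}_T$ holds for it.  No ``close variant'' of your coding can repair this without either dropping the requirement that $V$ model $P_{T,p}$ (which you need for condition~3) or dropping the requirement that $p$ actually prove $x$ (which you need for clause~3 of the proof-predicate definition).

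Second, even setting that aside, your coherence arguments for conditions~1 and~3 do not go through, for exactly the reason you flag as ``delicate'': different witnesses carry different $V$'s, and there is no mechanism forcing them to be compatible.  Two models of $P_{T,m}$ satisfying (A) and (B) can disagree arbitrarily on formulas outside $P_{T,m}$; and as $m$ grows, even the \emph{least} such model can switch discontinuously.  So from $e(\varphi\to\psi,V_0,n_0)=1$ and $e(\varphi,V_1,n_1)=1$ you cannot conclude $e(\psi,V,n)=1$ for any particular $V$ you could then package into a $\Prf_T^1$-proof of $\psi$.  Your point~(iii) about bounding the code of the new proof of $\psi$ is not a mere technicality: the modus-ponens proof may be much longer than the inputs, and then the required $V$ for it may simply fail to exist.

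The paper's construction avoids all of this by a global case split.  It defines an enumeration function $g_1$ with two procedures: while $\forall u\,{\sf Sat}(u)$ holds, $g_1$ just replays ${\sf Proof}_T$, so $\PRR_1$ provably coincides with ${\sf Prov}_T$ and inherits $\DG{2}$, $\DCG$, and $\Con^H$ from it (the last via Lemma~\ref{DCL1}).  If some least $m$ has $\neg{\sf Sat}(m)$, a \emph{single} $V$ is fixed once and for all (the least model of $P_{T,m-1}$ satisfying (B)), and from that point $g_1$ outputs $-\xi_k$ or $\neg\xi_k$ according to $e(\xi_k,V,n)$.  The key lemma is then that, provably in $\PA$, $\PRR_1(\gdl{\varphi})$ holds iff $e(\varphi,V,n)=1$; since $e(\cdot,V,n)$ is a single propositional valuation, $\DG{2}$ and $\Con^H$ are immediate, and $\DCG$ follows from condition~(B) together with the $\Delta_0$-clauses in the definition of $e$.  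The global $V$ is what you are missing.
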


\begin{proof}
We define a $\PA$-provably recursive function $g_1(x)$ in stages. 
Let $\Prf_1(x, y)$ be the $\Delta_1(\PA)$ formula $x = g_1(y) \land {\sf Fml}(x)$, where ${\sf Fml}(x)$ is a natural $\Delta_1(\PA)$ formula saying that $x$ is a formula. 
Let $\PRR_1(x)$ be the Rosser provability predicate of $\Prf_1(x, y)$. 
The definition of $g_1$ consists of Procedures 1 and 2. 
The definition of $g_1$ begins with Procedure 1, and enumerates theorems of $T$ until appearing a number $m$ such that ${\sf Sat}(m)$ does not hold. 
After appearing such a number $m$, the definition of $g_1$ shifts to Procedure 2. 
In Procedure 2, $g_1$ outputs all formulas in stages. 
In the definition of the function $g_1$, we identify each formula with its G\"odel number. 

\vspace{0.1in}

Procedure 1. \\
Stage $1.m$: 
\begin{itemize}
		\item If ${\sf Sat}(m)$, then
\[
	g_1(m) = \begin{cases}  \varphi & \text{if}\ m\ \text{is a proof of}\ \varphi\ \text{in} \ T,\ \text{that is},\ {\sf Proof}_T(\varphi, m)\ \text{holds}, \\
			0 & m\ \text{is not a proof of any formula in}\ T.
	\end{cases}
\]
	Go to Stage $1.(m+1)$. 
	\item If $\neg {\sf Sat}(m)$, then go to Procedure 2. 
\end{itemize}

Procedure 2. \\
Let $m$ be the least number such that ${\sf Sat}(m)$ does not hold. 
Since ${\sf Sat}(m-1)$ holds, there exists a model of $P_{T, m-1}$ satisfying the condition (B). 
Let $V$ be the least such model. 
Let $n = d(P_{T, m - 1})$, then $V$ is defined on $F_n$. 

%Let $\{\xi_k\}_{k \in \omega}$ be the effective enumeration of all formulas introduced in Section \ref{sec-dc}. 
We define the value $g_1(m + k)$ for $k \geq 0$ as follows: 
\[
	g_1(m + k) = \begin{cases}
	- \xi_k & \text{if}\ e(\xi_k, V, n) = 1, \\
	\neg \xi_k & \text{if}\ e(\xi_k, V, n) = 0. 
	\end{cases}
\]
The definition of the function $g_1$ is completed. 

First, we show that the formula ${\rm Prf}_1(x, y)$ is a proof predicate of $T$. 
Since $\forall u{\sf Sat}(u)$ is true in the standard model $\N$ by Lemma \ref{DCL1}, $\N \models {\sf Proof}_T(\gdl{\varphi}, \overline{n}) \leftrightarrow {\rm Prf}_1(\gdl{\varphi}, \overline{n})$ for all $\varphi$ and $n$ by the definition of $g_1$. 
Also $\PA \vdash \forall x \forall x' \forall y({\rm Prf}_1(x, y) \land {\rm Prf}_1(x', y) \to x = x')$ trivially holds. 
We show that the sentence $\forall x({\sf Prov}_T(x) \leftrightarrow \exists y \Prf_1(x, y))$ is provable in $\PA$. 
By the definition of $g_1$, this sentence is obviously proved in $\PA + \forall u {\sf Sat}(u)$. 
It suffices to show that this sentence is provable in $\PA + \exists u \neg {\sf Sat}(u)$. 

We proceed in $\PA + \exists u \neg {\sf Sat}(u)$: 
Let $m$ be the least number such that ${\sf Sat}(m)$ does not hold, and let $n = d(P_{T, m - 1})$. 
Let $V$ be the least model of $P_{T, m - 1}$ satisfying (B). 
We show that for any $k$, $g_1$ eventually outputs the formula $\xi_k$. 
We distinguish the following three cases. 
\begin{itemize}
	\item $e(\xi_k, V, n) = 1$ and $\xi_k$ is not of the form $\neg \psi$: Then $g_1(m + k) = - \xi_k = \xi_k$.	
	\item $e(\xi_k, V, n) = 1$ and $\xi_k$ is of the form $\neg \xi_l$: Then $e(\xi_l, V, n) = 0$ and $g_1(m + l) = \neg \xi_l = \xi_k$. 
	\item $e(\xi_k, V, n) = 0$: Then for $p$ with $\neg \xi_k = \xi_p$, $e(\xi_p, V, n) = 1$ and hence $g_1(m + p) = - \xi_p = \xi_k$. 
\end{itemize}
We have shown that $\exists y {\rm Prf}_1(x, y)$ holds if and only if $x$ is a formula. 
Since $\neg \Con_T$ holds by Lemma \ref{DCL1}, $x$ is provable in $T$ if and only if $x$ is a formula. 
Therefore $\forall x({\sf Prov}_T(x) \leftrightarrow \exists y {\rm Prf}_1(x, y))$ holds.  

Next, we show that our formula $\PRR_1(x)$ satisfies the required conditions. 
The following claim is a key property of our construction of $\PRR_1(x)$. 

\vspace{0.1in}
{\bf Claim 1.}
The following sentence is provable in $\PA$: 
``Let $m$ be the least number such that ${\sf Sat}(m)$ does not hold, let $n = d(P_{T, m - 1})$ and let $V$ be the least model of $P_{T, m - 1}$ satisfying (B). 
Then for any formula $\varphi$, 
\[
	e(\varphi, V, n) = 1\ \text{if and only if}\ \PRR_1(\gdl{\varphi})\ \text{holds''}.
\]
\begin{proof}
We proceed in $\PA$. 
Let $m$, $n$ and $V$ be as indicated in the statement. 
Let $\varphi$ be any formula. 

$(\Rightarrow)$: 
Suppose $e(\varphi, V, n) = 1$. 
If $\neg \varphi \in F_n$, then $V(\neg \varphi) = 1 - V(\varphi) = 1 - e(\varphi, V, n) = 0$. 
Therefore $\neg \varphi$ is not in $P_{T, m - 1}$ because $V$ is a model of $P_{T, m - 1}$. 
If $\neg \varphi \notin F_n$, then $\neg \varphi$ is not in $P_{T, m - 1}$ because $n = d(P_{T, m - 1})$. 
In either case, $\neg \varphi$ is not in $P_{T, m - 1}$. 
Hence $\neg \varphi$ is not in $\{g_1(0), \ldots, g_1(m-1)\}$ because the construction of $g_1$ executes Procedure 1 before Stage $1.m$. 

Let $\varphi = \xi_k$. 
Then $\neg \varphi$ is not in $\{g_1(m), \ldots, g_1(m+k)\}$ because $\neg \varphi$ is neither $- \xi_{k'}$ nor $\neg \xi_{k''}$ for all $k' \leq k$ and $k'' < k$.  
If $\varphi$ is not of the form $\neg \psi$, then $g_1(m + k) = - \varphi = \varphi$ because $e(\xi_k, V, n) = 1$. 
If $\varphi$ is of the form $\neg \xi_l$, then $l < k$ and $g_1(m + l) = \neg \xi_l = \varphi$ because $e(\xi_l, V, n) = 0$. 
Therefore $\PRR_1(\gdl{\varphi})$ holds in either case. 

$(\Leftarrow)$: Suppose $e(\varphi, V, n) = 0$. 
Then $\varphi \notin \{g_1(0), \ldots, g_1(m-1)\}$ because $n = d(P_{T, m - 1})$ and $V$ is a model of $P_{T, m-1}$. 
Let $\varphi = \xi_k$, then $g_1(m + k) = \neg \xi_k = \neg \varphi$. 
If $\varphi$ is not of the form $\neg \psi$, then $\varphi \notin \{g_1(m), \ldots, g_1(m + k -1)\}$ because $\varphi$ is neither $- \xi_{k'}$ nor $\neg \xi_{k'}$ for all $k' < k$. 
If $\varphi$ is of the form $\neg \xi_l$ for some $l < k$, then $\varphi$ is neither $- \xi_{l'}$ nor $\neg \xi_{l'}$ for all $l' < l$. 
Since $e(\xi_l, V, n) = 1 - e(\varphi, V, n) = 1$, $g_1(m + l) = -\xi_l \neq \varphi$. 
Hence we have $\varphi \notin \{g_1(m), \ldots, g_1(m + k -1)\}$. 
In either case, $\PRR_1(\gdl{\varphi})$ does not hold. 
\end{proof}

\vspace{0.1in}
{\bf Claim 2.}
$\PA \vdash \forall x \forall y(\PRR_1(x \dot{\to} y) \to (\PRR_1(x) \to \PRR_1(y)))$. 
\begin{proof}
Since $\PA + \forall u {\sf Sat}(u) \vdash \Con_T$ by Lemma \ref{DCL1}, $\PA + \forall u {\sf Sat}(u) \vdash \forall x ({\sf Prov}_T(x) \leftrightarrow \PRR_1(x))$ easily follows from the definition of $g_1$. 
Then $\DG{2}$ for ${\sf Prov}_T(x)$ implies $\PA + \forall u {\sf Sat}(u) \vdash \forall x \forall y(\PRR_1(x \dot{\to} y) \to (\PRR_1(x) \to \PRR_1(y)))$. 

We proceed in $\PA + \exists u \neg {\sf Sat}(u)$: 
Let $n$ and $V$ be as above. 
Let $\varphi$ and $\psi$ be any formulas with $\PRR_1(\gdl{\varphi \to \psi})$ and $\PRR_1(\gdl{\varphi})$ hold. 
Then $e(\varphi \to \psi, V, n) = e(\varphi, V, n) = 1$ by Claim 1. 
We have $e(\psi, V, n) = 1$ and hence $\PRR_1(\gdl{\psi})$ holds by Claim 1 again. 
We have shown that the theory $\PA + \exists u \neg {\sf Sat}(u)$ also proves $\forall x \forall y(\PRR_1(x \dot{\to} y) \to (\PRR_1(x) \to \PRR_1(y)))$. 
\end{proof}

\vspace{0.1in}
{\bf Claim 3.}
$\PA \vdash \forall x ({\sf True}_{\Delta_0}(x) \to \PRR_1(x))$. 
\begin{proof}
As in the proof of Claim 2, $\DCG$ for ${\sf Prov}_T(x)$ implies $\PA + \forall u {\sf Sat}(u) \vdash \forall x ({\sf True}_{\Delta_0}(x) \to \PRR_1(x))$. 

We work in $\PA + \exists u \neg {\sf Sat}(u)$: 
Let $n$ and $V$ be as above. 
First, we prove by induction on the construction of $\varphi \in \Delta_0$ that for all $\Delta_0$ sentences $\varphi$, $\varphi$ is true if and only if $e(\varphi, V, n) = 1$. 
\begin{itemize}
	\item (Base Case): $\varphi$ is an atomic sentence or a universal sentence: \\
	$(\Rightarrow)$: Suppose that $\varphi$ is true. 
	If $\varphi \notin F_n$, then $e(\varphi, V, n) = 1$ by the definition of $e$. 
	If $\varphi \in F_n$, then $V(\varphi) = 1$ by the condition (B). 
	Hence $e(\varphi, V, n) = 1$. 
	
	The proof for $(\Leftarrow)$ is similar. 
	
	\item Induction cases are straightforward by the definition of $e$. 
\end{itemize}

Let $\varphi$ be any true $\Delta_0$ sentence. 
Then $e(\varphi, V, n) = 1$ as shown above. 
By Claim 1, $\PRR_1(\gdl{\varphi})$ holds. 
We have shown $\PA + \exists u \neg {\sf Sat}(u) \vdash \forall x ({\sf True}_{\Delta_0}(x) \to \PRR_1(x))$. 
\end{proof}

\vspace{0.1in}
{\bf Claim 4.}
$\PA \vdash \forall x (\PRR_1(x) \to \neg \PRR_1(\dot{\neg}x))$. 
\begin{proof}
Since $\Con_T$ is equivalent to $\forall x(\PR_T(x) \to \neg \PR_T(\dot{\neg}x))$, $\PA + \forall u {\sf Sat}(u) \vdash \forall x(\PRR_1(x) \to \neg \PRR_1(\dot{\neg}x))$ by Lemma \ref{DCL1}.

We reason in $\PA + \exists u \neg {\sf Sat}(u)$: 
Let $n$ and $V$ be as above. 
Suppose $\PRR_1(\gdl{\varphi})$ holds for a formula $\varphi$. 
Then $e(\varphi, V, n) = 1$ by Claim 1. 
Since $e(\neg \varphi, V, n) = 1 - e(\varphi, V, n) = 0$, $\PRR_1(\gdl{\neg \varphi})$ does not hold by Claim 1 again.  
\end{proof}

This completes our proof of Theorem \ref{DCT1}. 
\end{proof}

\subsection{Proof of Theorem \ref{DCT2}}

In this subsection, we prove Theorem \ref{DCT2}. 

\begin{theorem}
There exists a Rosser provability predicate $\PRR_2(x)$ of $T$ satisfying the following conditions: 
\begin{enumerate}
	\item $T \vdash \PRR_2(\gdl{\forall \vec{x}\, \varphi(\vec{x})}) \to \forall \vec{x}\, \PRR_2(\gdl{\varphi(\vec{\dot{x}})})$ for any formula $\varphi(\vec{x})$. 
	\item $T \vdash \PRR_2(\gdl{\varphi \to \psi}) \to (\PRR_2(\gdl{\varphi}) \to \PRR_2(\gdl{\psi}))$ for any formulas $\varphi$ and $\psi$. 
	\item $\PA \vdash \forall x({\sf True}_{\Delta_0}(x) \to \PRR_2(x))$. 
\end{enumerate}
\end{theorem}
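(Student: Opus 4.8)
The plan is to imitate the proof of Theorem~\ref{DCT1}, changing only the behaviour of the witnessing function in the branch where ${\sf Sat}$ fails, so that the resulting predicate becomes closed under instantiation and not merely under modus ponens. So I would define a $\PA$-provably recursive $g_2(x)$ by two procedures, set $\Prf_2(x,y):\equiv x = g_2(y)\wedge{\sf Fml}(x)$, and take $\PRR_2(x)$ to be the Rosser provability predicate of $\Prf_2(x,y)$. Procedure~1 enumerates the theorems of $T$ stage by stage as long as ${\sf Sat}$ holds, and when a number $m$ with $\neg{\sf Sat}(m)$ turns up the construction switches to Procedure~2. Since $\forall u\,{\sf Sat}(u)$ holds in $\N$ by Lemma~\ref{DCL1}, $\Prf_2(x,y)$ is a proof predicate of $T$ for the same reasons as $\Prf_1$; and because $\PA+\forall u\,{\sf Sat}(u)\vdash\forall x({\sf Prov}_T(x)\leftrightarrow\PRR_2(x))$, in that branch the three conditions of the theorem hold outright, as ${\sf Prov}_T(x)$ satisfies $\CB$, $\D2$ and $\DCG$. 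Everything therefore comes down to designing Procedure~2 so that $\PA+\exists u\,\neg{\sf Sat}(u)$ proves $\CB$, $\D2$ and $\DCG$ for $\PRR_2(x)$.

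Working in $\PA+\exists u\,\neg{\sf Sat}(u)$ with $m$ the least number violating ${\sf Sat}$, I want an analogue of Claim~1 from the proof of Theorem~\ref{DCT1}: $\PRR_2(\gdl{\varphi})$ should be equivalent to $\varphi$ lying in a set $S$ of sentences that (i) contains every true $\Delta_0$ sentence, (ii) is closed under modus ponens, (iii) is closed under instantiation, is complete and is consistent, and (iv) contains everything already emitted in Procedure~1, i.e.\ $P_{T,m-1}$. Then (i), (ii) and (iii) deliver $\DCG$, $\D2$ and $\CB$, while (iv) ensures the Procedure~1 output does not spoil the Rosser comparisons. The one new ingredient relative to Theorem~\ref{DCT1} is (iii): there the evaluation $e(\varphi,V,n)$ treats a universal formula as an atom, so $\{\varphi : e(\varphi,V,n)=1\}$ is only propositionally closed --- which already gives $\DG2$ but not $\CB$ (indeed $\PRR_1$ cannot satisfy $\CB$, since $\PRR_1$ already satisfies $\BD2$ and $\DCU$, so $\CB$ would give $T\nvdash\Con^H_{\PRR_1}$ by Theorem~\ref{HB}, contradicting Theorem~\ref{DCT1}). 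Accordingly I would replace $e$ by a variant $e_2$ that descends through the universal quantifiers, build the correspondingly strengthened form of condition~(A) into the definition of ${\sf Sat}$ itself, and in Procedure~2 emit $-\xi_k$ or $\neg\xi_k$ at stage $m+k$ according to $e_2(\xi_k,V,n)$, where $V$ is the least model of $P_{T,m-1}$ witnessing ${\sf Sat}(m-1)$ --- now coherent with instantiation --- together with auxiliary stages that re-emit the instances forced by a universal formula once it has been committed.

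With the construction in hand, the verification should run parallel to Theorem~\ref{DCT1}. First, $\Prf_2(x,y)$ is a proof predicate: the only clause needing the abnormal branch is $\PA\vdash\forall x({\sf Prov}_T(x)\leftrightarrow\exists y\,\Prf_2(x,y))$, which holds because $\neg\Con_T$ there forces $g_2$ to enumerate every formula. Next, the Claim~1 analogue is proved by the same bookkeeping over which of $\varphi$ and $\neg\varphi$ is emitted first, using $e_2(\neg\varphi,V,n)=1-e_2(\varphi,V,n)$ for completeness and consistency, the multiplicative clause of $e_2$ at $\wedge$ for modus ponens, the strengthened~(A) and the descent through $\forall$ for instantiation, and $\DCG$ for ${\sf Prov}_T(x)$ together with the $\Delta_0$-clause of $e_2$ for the true $\Delta_0$ sentences. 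Finally, in the branch $\PA+\exists u\,\neg{\sf Sat}(u)$ the three conditions follow from (iii), (ii) and (i), and in the branch $\PA+\forall u\,{\sf Sat}(u)$ from the identification $\PRR_2\equiv{\sf Prov}_T$. One must also check that Lemmas~\ref{DCL1} and~\ref{DCL2} survive the strengthening of condition~(A); this should be routine, since the truth assignment read off from a complete consistent extension of $T$ is automatically coherent with instantiation and $T$ is essentially reflexive.

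I expect the real difficulty to lie in making clause~(iii) \emph{effective}. A perfectly faithful evaluation of a universal formula is just a decision procedure for arithmetic truth, so $g_2$ would then not be ($\PA$-provably) recursive and $\Prf_2$ would not be $\Delta_1(\PA)$. The work is to carry out the descent through the universal quantifiers in a controlled way --- exploiting that each $F_k$ is finite, letting the window inspected at stage $s$ grow with $s$, and keeping a finite record of which universal formulas have been committed so that their instances are forced rather than recomputed --- and to choose the strengthening of condition~(A) sharply enough that such a $V$ still exists whenever ${\sf Sat}(m-1)$ holds while keeping ${\sf Sat}$ within $\Delta_1(\PA)$ and Lemmas~\ref{DCL1},~\ref{DCL2} intact. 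I would also bear in mind throughout that, unlike Theorem~\ref{DCT1}, this construction must forfeit provability of $\Con^H_{\PRR_2}$: once $\PRR_2$ is seen to satisfy $\BD2$ (from $\D1$ and $\D2$), $\CB$ and $\DCU$ (from $\DCG$), Theorem~\ref{HB} forces $T\nvdash\Con^H_{\PRR_2}$, so there is no room to strengthen the theorem toward the pattern of Theorem~\ref{DCT1}.
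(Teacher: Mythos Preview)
Your framework is right—share Procedure~1 with Theorem~\ref{DCT1} and redesign Procedure~2—but the mechanism you propose for Procedure~2 diverges from the paper's and runs into exactly the effectivity obstacle you flag, whereas the paper sidesteps that obstacle entirely.

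The paper does \emph{not} modify the evaluation function to descend through universal quantifiers, and it does \emph{not} seek a biconditional analogue of Claim~1. Instead it keeps ${\sf Sat}$ exactly as already defined (condition~(A) was part of ${\sf Sat}$ from the start; Theorem~\ref{DCT1} simply did not use it), takes $V$ to be the least model of $P_{T,m-1}$ satisfying \emph{both} (A) and (B), and then calls a formula $\varphi$ \emph{critical} if either $\varphi\in F_n$ and $V(\varphi)=1$, or $\varphi\notin F_n$ and $\varphi$ is a true $\Delta_0$ sentence, or $\varphi\notin F_n$ and $\varphi$ is an instance of some $\psi\in F_n$ with $V(\psi)=1$. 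This predicate is trivially $\Delta_1(\PA)$: only finitely many $\psi\in F_n$ need to be checked, and being an instance is primitive recursive. Procedure~2 then emits, at step $k$, either $\neg\xi_k$ followed by $\xi_k$ (when $\xi_k$ is not critical but $\neg\xi_k$ is) or just $\xi_k$ (otherwise). The resulting Claim~1 is only one-and-a-half directions: critical implies $\PRR_2$, and if $\neg\varphi\in F_n$ is critical then $\neg\PRR_2(\gdl{\varphi})$.

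The reason this suffices is the point you glossed over: $\CB$ and $\D2$ in the theorem are stated \emph{schematically in $T$}, not globally in $\PA$. For a fixed standard formula $\forall\vec{x}\,\varphi(\vec{x})$ (resp.\ $\varphi,\psi$), Lemma~\ref{DCL2} lets $T$ prove that $m$, hence $n$, exceeds the G\"odel number of $\neg\forall\vec{x}\,\varphi(\vec{x})$ (resp.\ of $\neg\varphi$ and $\neg(\varphi\to\psi)$). So these formulas lie in $F_n$, the partial converse direction of Claim~1 applies, and condition~(A) on $V$ does the work. Your plan to prove $\CB$ and $\D2$ uniformly in $\PA+\exists u\,\neg{\sf Sat}(u)$ asks for more than the theorem requires, and that overshoot is precisely what forces you toward a truth-like $e_2$ and the attendant effectivity problem. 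Once you aim only for the schematic statements in $T$, no growing windows or committed-universal bookkeeping is needed.

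In short: keep your Procedure~1, drop the $e_2$ idea, use the existing condition~(A), introduce the ``critical'' notion above, accept a weaker Claim~1, and prove clauses~1 and~2 in $T$ using Lemma~\ref{DCL2}. Only clause~3 ($\DCG$) is argued in $\PA$.
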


\begin{proof}
The definition of our $\PA$-provably recursive function $g_2$ corresponding to this theorem is different from the definition of the function $g_1$ in our proof of Theorem \ref{DCT1} only for Procedure 2. 
We describe only Procedure 2 of the definition of the function $g_2$.

\vspace{0.1in}
Procedure 2. ${\sf Sat}(m-1)$ holds but ${\sf Sat}(m)$ does not hold. \\
%Notice that $m$ is nonstandard. 
Let $n = d(P_{T, m - 1})$ and let $V$ be the least model of $P_{T, m - 1}$ satisfying the conditions (A) and (B).

We say a formula $\varphi$ is {\em critical} if $\varphi$ satisfies one of the following conditions: 
\begin{enumerate}
	\item $\varphi \in F_n$ and $V(\varphi) = 1$; 
	\item $\varphi \notin F_n$ and $\varphi$ is a true $\Delta_0$ sentence;  
	\item $\varphi \notin F_n$ and there exists a formula $\psi \in F_n$ such that $\varphi$ is an instance of $\psi$ and $V(\psi) = 1$. 
\end{enumerate}
Notice that if $\varphi \in F_n$, then $\varphi$ is critical if and only if $V(\varphi) = 1$. 

Let $\{\xi_k\}_{k \in \omega}$ be the effective enumeration of all formulas as above. 
We simultaneously define the values $g_2(m + k)$ for $k \geq 0$ and a sequence $\{i_k\}_{k \in \omega}$ of numbers as follows: Let $i_0 = 0$. 
\begin{enumerate}
	\item If $\xi_k$ is not critical and $\neg \xi_k$ is critical, then let $g_2(m + i_k) = \neg \xi_k$, $g_2(m + i_k + 1) = \xi_k$ and $i_{k+1} = i_k + 2$. 
	\item Otherwise, let $g_2(m + i_k) = \xi_k$ and $i_{k+1} = i_k + 1$. 
\end{enumerate}

The definition of $g_2$ is finished. 
As in the proof of Theorem \ref{DCT1}, it can be shown that ${\rm Prf}_2(x, y)$ is a proof predicate of $T$, and we omit the proof. 

\vspace{0.1in}
{\bf Claim 1.}
The following sentence is provable in $\PA$: 
``Let $m$ be the least number such that ${\sf Sat}(m)$ does not hold, let $n$ be $d(P_{T, m-1})$ and let $V$ be the least model of $P_{T, m - 1}$ satisfying the conditions (A) and (B). 
Then for any formula $\varphi$, 
\begin{enumerate}
	\item if $\varphi$ is critical, then $\PRR_2(\gdl{\varphi})$ holds, 
	\item and if $\neg \varphi \in F_n$ and $\neg \varphi$ is critical, then $\PRR_2(\gdl{\varphi})$ does not hold''.
\end{enumerate}
\begin{proof}
We procced in $\PA$. 
Let $m$, $n$ and $V$ be as indicated in the statement. 
Let $\varphi$ be any formula. 
Then for some $k$, $\varphi$ is $\xi_k$. 

1. Suppose $\xi_k$ is critical. 
First, we prove that $\neg \xi_k$ is not in $\{g_2(0), \ldots, g_2(m-1)\}$. 
If $\xi_k \in F_n$, then $V(\xi_k) = 1$, and thus $\neg \xi_k$ is not in $P_{T, m - 1}$ because $V$ is a model of $P_{T, m - 1}$. 
If $\xi_k \notin F_n$, then $\neg \xi_k \notin P_{T, m - 1}$ because $n = d(P_{T, m -1})$. 
In either case, $\neg \xi_k \notin P_{T, m - 1}$. 
Therefore $\neg \xi_k \notin \{g_2(0), \ldots, g_2(m-1)\}$. 
By the definition of $g_2$, $\neg \xi_k$ is also not in $\{g_2(m), \ldots, g_2(m+ i_k)\}$. 
Since $\xi_k$ is critical, $g_2(m + i_k) = \xi_k$. 
Hence $\PRR_2(\gdl{\xi_k})$ holds. 

2. Suppose $\neg \xi_k \in F_n$ and $\neg \xi_k$ is critical. 
Then $V(\neg \xi_k) = 1$. 
Thus $\xi_k \in F_n$ and $V(\xi_k) = 0$. 
It follows that $\xi_k$ is not critical. 
Also $\xi_k$ is not in $P_{T, m - 1}$ nor $\{g_2(0), \ldots, g_2(m-1)\}$. 
Even if $\xi_k$ is of the form $\neg \psi$, $\xi_k$ is not in $\{g_2(m), \ldots, g_2(m + i_k -1)\}$ because $\xi_k$ is not critical. 
Since $g_2(m + i_k) = \neg \xi_k$, $\PRR_2(\gdl{\xi_k})$ does not hold. 
\end{proof}

\vspace{0.1in}
{\bf Claim 2.}
$T \vdash \PRR_2(\gdl{\forall \vec{x}\, \varphi(\vec{x})}) \to \forall \vec{x}\, \PRR_2(\gdl{\varphi(\vec{\dot{x}})})$ for any formula $\varphi(\vec{x})$. 
\begin{proof}
Let $\varphi(\vec{x})$ be any formula. 
As in our proof of Theorem \ref{DCT1}, it suffices to show that the sentence is provable in $T + \exists u \neg {\sf Sat}(u)$. 
We reason in $T + \exists u \neg {\sf Sat}(u)$. 
Let $n$ and $V$ be as above. 
Notice that $n$ is larger than the G\"odel number of the formula $\neg \forall \vec{x} \varphi(\vec{x})$ by Lemma \ref{DCL2}. 
Suppose $\PRR_2(\gdl{\forall \vec{x} \varphi(\vec{x})})$ holds. 
Since $\neg \forall \vec{x} \varphi(\vec{x}) \in F_n$, $\neg \forall \vec{x} \varphi(\vec{x})$ is not critical by Claim 1. 
Then $V(\neg \forall \vec{x} \varphi(\vec{x})) \neq 1$, and hence $V(\forall \vec{x} \varphi(\vec{x})) = 1$. 
Let $\varphi(\vec{\overline{a}})$ be any instance of $\forall \vec{x} \varphi(\vec{x})$. 
If $\varphi(\vec{\overline{a}}) \in F_n$, then $V(\varphi(\vec{\overline{a}})) = 1$ by the condition (A). 
Thus $\varphi(\vec{\overline{a}})$ is critical. 
If $\varphi(\vec{\overline{a}}) \notin F_n$, then $\varphi(\vec{\overline{a}})$ is also critical because $V(\forall \vec{x} \varphi(\vec{x})) = 1$. 
In either case, $\varphi(\vec{\overline{a}})$ is critical. 
Then $\PRR_2(\gdl{\varphi(\vec{\overline{a}})})$ holds by Claim 1. 
\end{proof}

\vspace{0.1in}
{\bf Claim 3.}
$T \vdash \PRR_2(\gdl{\varphi \to \psi}) \to (\PRR_2(\gdl{\varphi}) \to \PRR_2(\gdl{\psi}))$ for any formulas $\varphi$ and $\psi$. 
\begin{proof}
Let $\varphi$ and $\psi$ be any formulas. 
We work in $T + \exists u \neg {\sf Sat}(u)$. 
Let $n$ and $V$ be as above. 
Notice that $n$ is larger than the G\"odel numbers of the formulas $\neg \varphi$ and $\neg (\varphi \to \psi)$ by Lemma \ref{DCL2}. 
Suppose $\PRR_2(\gdl{\varphi \to \psi})$ and $\PRR_2(\gdl{\varphi})$ hold. 
Since $\neg (\varphi \to \psi)$ and $\neg \varphi$ are in $F_n$, these sentences are not critical by Claim 1. 
Then $V(\neg (\varphi \to \psi)) \neq 1$ and $V(\neg \varphi) \neq 1$. 
Thus $V(\varphi \to \psi) = V(\varphi) = 1$. 
Hence $V(\psi) = 1$ and $\psi$ is critical. 
Therefore $\PRR_2(\gdl{\psi})$ holds by Claim 1. 
\end{proof}

\vspace{0.1in}
{\bf Claim 4.}
$\PA \vdash \forall x ({\sf True}_{\Delta_0}(x) \to \PRR_2(x))$. 
\begin{proof}
We proceed in $\PA + \exists u \neg {\sf Sat}(u)$. 
Let $n$ and $V$ be as above. 
Let $\varphi$ be any true $\Delta_0$ sentence. 
If $\varphi \in F_n$, then $V(\varphi) = 1$ by the condition (B). 
Thus $\varphi$ is critical. 
If $\varphi \notin F_n$, then $\varphi$ is critical because $\varphi$ is a true $\Delta_0$ sentence. 
In either case, $\varphi$ is critical. 
Therefore we have $\PRR_2(\gdl{\varphi})$ by Claim 1. 
\end{proof}

This completes our proof of Theorem \ref{DCT2}. 
\end{proof}

\subsection{Proof of Theorem \ref{DCT3}}

In this subsection, we prove Theorem \ref{DCT3}. 
Before proving the theorem, for each natural number $m$, we recursively define the sequence $\{X_{m,n}\}_{n \in \omega}$ of finite sets of negated formulas as follows: 
\begin{enumerate}
	\item $X_{m, 0} : = \{\neg \varphi : \neg \varphi \in P_{T, m}\}$. 
	\item $\neg \varphi \in X_{m, n+1}$ if and only if at least one of the following conditions holds:
	\begin{itemize}
		\item $\neg \varphi \in F_m$ and for some instance $\varphi'$ of $\varphi$, $\neg \varphi' \in X_{m, n}$. 
		\item There is a formula $\psi$ such that $\neg \psi \in X_{m, n}$ and $\neg \psi \to \neg \varphi \in P_{T, m}$. 
	\end{itemize}
\end{enumerate}
Let $X_m : = \bigcup_{n \in \omega} X_{m, n}$. 

\setcounter{theorem}{8}

\begin{lemma}\label{XYC}
Let $\varphi$ be any formula and $m$ be any natural number. 
\begin{enumerate}
	\item If $\varphi \in X_m$, then $\varphi$ is provable in $T$. 
	\item $X_m \subseteq X_{m+1}$.
	\item $X_m \subseteq F_m$. 
	As a consequence, $X_m = \bigcup_{n \leq |F_m|} X_{m, n}$ where $|F_m|$ is the number of elements of the finite set $F_m$. 
\end{enumerate}
\end{lemma}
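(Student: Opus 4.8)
\textbf{Proof plan for Lemma \ref{XYC}.}

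The plan is to prove the three clauses in order, since each uses the previous ones. For clause (1), I would argue by induction on the least $n$ with $\varphi \in X_{m,n}$. In the base case $\varphi \in X_{m,0}$, so $\varphi$ is a negated formula lying in $P_{T,m}$, which by definition of $P_{T,m}$ means $\N \models \exists y \leq \overline{m}\,{\sf Proof}_T(\gdl{\varphi}, y)$, and hence $\varphi$ is provable in $T$. For the inductive step, $\varphi$ enters $X_{m,n+1}$ via one of the two clauses: either $\varphi \equiv \neg\chi$ with some instance $\chi'$ of $\chi$ satisfying $\neg\chi' \in X_{m,n}$, in which case by the induction hypothesis $T \vdash \neg\chi'$, and since $\chi \to \chi'$ is logically valid (an instance follows from its universal closure) we get $T \vdash \neg\chi$; or there is $\psi$ with $\neg\psi \in X_{m,n}$ and $(\neg\psi \to \varphi) \in P_{T,m}$ (note $\varphi$ here is negated, matching $\neg\varphi$ in the displayed condition), so by the induction hypothesis $T \vdash \neg\psi$ and by definition of $P_{T,m}$ also $T \vdash \neg\psi \to \varphi$, whence $T \vdash \varphi$ by modus ponens.

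For clause (2), I would show by induction on $n$ that $X_{m,n} \subseteq X_{m+1}$; combined with $X_m = \bigcup_n X_{m,n}$ this gives $X_m \subseteq X_{m+1}$. The base case uses $P_{T,m} \subseteq P_{T,m+1}$ (more proofs become available with the larger bound), so $X_{m,0} \subseteq X_{m+1,0} \subseteq X_{m+1}$. For the inductive step, note that $F_m \subseteq F_{m+1}$ and $P_{T,m} \subseteq P_{T,m+1}$, so each of the two defining conditions for membership in $X_{m,n+1}$, when witnessed using elements of $X_{m,n} \subseteq X_{m+1}$ (say $X_{m,n} \subseteq X_{m+1,j}$ for suitable $j$ by the induction hypothesis made quantitative), is also witnessed for membership in some $X_{m+1,j+1} \subseteq X_{m+1}$. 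A small bookkeeping point is to carry the induction in the form ``for each $n$ there is $j$ with $X_{m,n} \subseteq X_{m+1,j}$'' so the witnessing stage indices line up.

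For clause (3), the key observation is that every formula that ever enters $X_{m,n}$ is a \emph{negated} formula, and I claim it always lies in $F_m$. Indeed $X_{m,0} \subseteq F_m$ directly. In the first defining clause of $X_{m,n+1}$, membership explicitly requires $\neg\varphi \in F_m$. In the second clause, from $(\neg\psi \to \neg\varphi) \in P_{T,m} \subseteq F_m$ and the fact that $\neg\varphi$ is a subformula of $\neg\psi \to \neg\varphi$ (recall $\to$ is an abbreviation, so $\neg\varphi$ is a genuine subformula), the convention that subformulas have smaller or equal Gödel numbers gives $\neg\varphi \in F_m$. Hence $X_{m,n} \subseteq F_m$ for all $n$, so $X_m \subseteq F_m$. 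Since $X_{m,0} \subseteq X_{m,1} \subseteq \cdots$ is a nondecreasing chain of subsets of the finite set $F_m$ (monotonicity of the chain itself follows from the fact that each condition defining $X_{m,n+1}$ is preserved when $X_{m,n}$ grows, plus $X_{m,0} \subseteq X_{m,1}$), the chain stabilizes after at most $|F_m|$ steps, giving $X_m = \bigcup_{n \leq |F_m|} X_{m,n}$.

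The main obstacle I anticipate is purely notational rather than mathematical: one must be careful that the symbol $\varphi$ plays a negated role in the second defining clause (the displayed condition has $\neg\varphi \in X_{m,n+1}$ governed by $\neg\psi \to \neg\varphi \in P_{T,m}$), and one must correctly invoke the fixed Gödel numbering convention that proper subformulas receive smaller indices — this is exactly what makes clause (3) go through. Establishing that the chain $\{X_{m,n}\}_n$ is monotone (so that $X_m$ is a genuine fixed point reached in finitely many steps) also deserves an explicit line, though it follows immediately by induction from the monotonicity of the two defining conditions in the argument $X_{m,n}$.
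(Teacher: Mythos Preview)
Your proposal is correct and follows essentially the same approach as the paper: induction on $n$ for all three clauses, using $P_{T,m} \subseteq P_{T,m+1}$ and $P_{T,m} \subseteq F_m$ at the appropriate points. The only cosmetic difference is in clause~(2), where the paper proves the slightly sharper stagewise inclusion $X_{m,n} \subseteq X_{m+1,n}$ directly (which avoids your ``for each $n$ there is $j$'' bookkeeping), and in clause~(3) the paper is terser than you are but relies on exactly the subformula/G\"odel-number observation you spell out.
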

\begin{proof}
1. We prove by induction on $n$ that for any $n \in \omega$, if $\neg \varphi \in X_{m, n}$, then $\neg \varphi$ is provable in $T$. 
\begin{itemize}
	\item If $\neg \varphi \in X_{m, 0}$, then $\neg \varphi \in P_{T, m}$ and hence $\neg \varphi$ is provable in $T$. 
	\item Assume that the statement is true for $n$. 
	Suppose $\neg \varphi \in X_{m, n+1}$. 
	If $\neg \varphi' \in X_{m, n}$ for some instance $\varphi'$ of $\varphi$, then $\neg \varphi'$ is $T$-provable by induction hypothesis. 
	Since $\varphi \to \varphi'$ is logically valid, $\neg \varphi$ is also provable in $T$. 
	
	If there is a formula $\psi$ such that $\neg \psi \in X_{m, n}$ and $\neg \psi \to \neg \varphi \in P_{T, m}$, then $\neg \psi$ is provable in $T$ by induction hypothesis. 
Since $\neg \psi \to \neg \varphi$ is $T$-provable, $\neg \varphi$ is also $T$-provable. 
\end{itemize}

2. We prove $X_{m, n} \subseteq X_{m+1, n}$ for all $n \in \omega$ by induction on $n$. 
The $n = 0$ case is immediate from $P_{T, m} \subseteq P_{T, m+1}$. 
Assume $X_{m, n} \subseteq X_{m+1, n}$. 
Suppose $\neg \varphi \in X_{m, n+1}$. 
If $\neg \varphi \in F_m$ and $\neg \varphi' \in X_{m, n}$ for some instance $\varphi'$ of $\varphi$, then $\neg \varphi \in F_{m+1}$ and $\neg \varphi' \in X_{m+1, n}$, and hence $\neg \varphi \in X_{m+1, n+1}$. 

If there is a formula $\psi$ such that $\neg \psi \in X_{m, n}$ and $\neg \psi \to \neg \varphi \in P_{T, m}$, then $\neg \varphi \in X_{m+1, n+1}$ because $\neg \psi \in X_{m+1, n}$ and $\neg \psi \to \neg \varphi \in P_{T, m+1}$. 

3. This is proved by induction and by using the fact $P_{T, m} \subseteq F_m$. 
\end{proof}

Notice that Lemma \ref{XYC} is formalizable in $\PA$. 
Also notice that there is a $\PA$-provably recursive computation calculating $X_m$ from $m$. 

Here we give our proof of Theorem \ref{DCT3}. 

\setcounter{theorem}{2}

\begin{theorem}
There exists a Rosser provability predicate $\PRR_3(x)$ of $T$ satisfying the following conditions: 
\begin{enumerate}
	\item $T \vdash \PRR_3(\gdl{\forall \vec{x}\, \varphi(\vec{x})}) \to \forall \vec{x}\, \PRR_3(\gdl{\varphi(\vec{\dot{x}})})$ for any formula $\varphi(\vec{x})$. 
	\item For any formulas $\varphi$ and $\psi$, if $T \vdash \varphi \to \psi$, then $T \vdash \PRR_3(\gdl{\varphi}) \to \PRR_3(\gdl{\psi})$. 
	\item $\PA \vdash \forall x (\PRR_3(x) \to \PRR_3(\gdl{\PRR_3(\dot{x})}))$. 
	\item $\PA \vdash \forall x ({\sf True}_{\Delta_0}(x) \to \PRR_3(x))$. 
\end{enumerate}
\end{theorem}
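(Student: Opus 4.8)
The plan is to follow the template of the proofs of Theorems~\ref{DCT1} and~\ref{DCT2}: we build a $\PA$-provably recursive function $g_3$, set $\Prf_3(x,y):\equiv x = g_3(y)\land{\sf Fml}(x)$, and let $\PRR_3(x)$ be the Rosser provability predicate of $\Prf_3(x,y)$. The function $g_3$ again runs Procedure~1 --- enumerating $T$-proofs stage by stage as long as ${\sf Sat}(m)$ keeps holding --- and switches to a new Procedure~2 as soon as the least $m$ with $\neg{\sf Sat}(m)$ appears; we then fix $n = d(P_{T,m-1})$, the least model $V$ of $P_{T,m-1}$ satisfying (A) and (B), and the finite set $X_{m-1}$ of Lemma~\ref{XYC}. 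Procedure~2 refines the one used for $\PRR_2$. Extending the notion of ``critical'' so that it is defined for every formula --- combining the evaluation $e(\cdot,V,n)$ from the proof of Theorem~\ref{DCT1}, the treatment of instances from the proof of Theorem~\ref{DCT2}, and the requirement $\neg\varphi\notin X_{m-1}$ --- we enumerate all formulas so that for each $k$, if $\xi_k$ is not critical but $\neg\xi_k$ is, we output $\neg\xi_k$ before $\xi_k$, and otherwise output $\xi_k$ first; and, crucially, we place the whole of $X_{m-1}$ at the start of Procedure~2, so that $\neg\varphi\in X_{m-1}$ guarantees $\neg\varphi$ is output before $\varphi$. (A small fixed-point / recursion-theorem argument is needed here, since criticality and the enumeration order will refer to $\PRR_3$ itself.) The target, to be isolated as Claim~1 and proved in $\PA$, is: for the least $m$ with $\neg{\sf Sat}(m)$ and $n,V$ as above, $\PRR_3(\gdl{\varphi})$ holds if and only if $\varphi$ is critical. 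The facts that $\Prf_3$ is a single-conclusion proof predicate of $T$ and that $\PA + \forall u\,{\sf Sat}(u)\vdash\forall x({\sf Prov}_T(x)\leftrightarrow\PRR_3(x))$ are routine, exactly as in the earlier proofs; in the case $\forall u\,{\sf Sat}(u)$ all four conclusions then follow from the fact that ${\sf Prov}_T(x)$ satisfies $\DU{1}$, $\DG{2}$ and $\SCG$ (recall $\DG{3}$ is an instance of $\SCU$, and use Proposition~\ref{DCP}), so the real work is to establish conditions~1--4 inside $T + \exists u\,\neg{\sf Sat}(u)$, and $\PA + \exists u\,\neg{\sf Sat}(u)$ for condition~4.

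Granting Claim~1, conditions~1, 2 and~4 are obtained by merging the arguments used for $\PRR_2$ with the closure clauses in the definition of $X_{m-1}$ (Lemma~\ref{XYC}). For $\CB$ (condition~1): arguing in $T + \exists u\,\neg{\sf Sat}(u)$, if $\PRR_3(\gdl{\forall\vec x\,\varphi(\vec x)})$ holds then $\forall\vec x\,\varphi(\vec x)$ is critical; the criticality of every instance $\varphi(\vec{\overline{a}})$ is shown exactly as for $\PRR_2$ (using condition (A) of $V$), while the part ``$\neg\varphi(\vec{\overline{a}})\notin X_{m-1}$'' holds because otherwise the instance clause of $X_{m-1}$ --- applicable since $\neg\forall\vec x\,\varphi(\vec x)\in F_{m-1}$ by Lemma~\ref{DCL2} --- would put $\neg\forall\vec x\,\varphi(\vec x)$ into $X_{m-1}$, contradicting criticality of $\forall\vec x\,\varphi(\vec x)$. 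For $\BD{2}$ (condition~2): if $T\vdash\varphi\to\psi$ then $\neg\psi\to\neg\varphi$ has a $T$-proof whose (standard) code is $\le m-1$ by Lemma~\ref{DCL2}, so it lies in $P_{T,m-1}$; the modus ponens clause of $X_{m-1}$ together with the preservation of criticality under such implications (again as for $\PRR_2$) shows that criticality of $\varphi$ implies criticality of $\psi$. For $\DCG$ (condition~4): a true $\Delta_0$ sentence is critical, since it is critical in the sense used for $\PRR_2$ and, by a short induction along the stages defining $X_{m-1}$ using that $V$ is $\Delta_0$-correct and models $P_{T,m-1}$, its negation --- a false $\Delta_0$ sentence --- cannot belong to $X_{m-1}$.

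Condition~3, $\DG{3}$, is the point for which the sets $X_m$ are genuinely needed, and it is the step I expect to be the main obstacle. Arguing in $\PA + \exists u\,\neg{\sf Sat}(u)$: assume $\PRR_3(\gdl{\varphi})$ holds, so by Claim~1 the formula $\varphi$ is critical; writing $\theta:\equiv\PRR_3(\gdl{\varphi})$, we must show $\theta$ is critical and then invoke Claim~1 once more to get $\PRR_3(\gdl{\theta})$, i.e.\ $\PRR_3(\gdl{\PRR_3(\gdl{\varphi})})$. That $\theta$ is critical should follow from the way Procedure~2 is built (this is where the fixed-point construction does its work: outputting $\varphi$ ``positively'' forces $\PRR_3(\gdl{\varphi})$ to be output ``positively'' as well), but the delicate point, and the crux of the proof, is that $\neg\theta = \neg\PRR_3(\gdl{\varphi})$ must be shown not to lie in $X_{m-1}$ when $\varphi$ is critical. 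Here one exploits the syntactic shape of $\theta$: since $\PRR_3(\gdl{\varphi})$ is an existential (hence negated) formula, the instance clause cannot introduce $\neg\theta$ into $X_{m-1}$, so $\neg\theta$ could enter only from a $T$-proof of code $\le m-1$, or, via the modus ponens clause, from some $\neg\eta\in X_{m-1}$ together with a $T$-proof of $\neg\eta\to\neg\theta$ of code $\le m-1$. In each case one traces this short $T$-derivation back, using the effective content of Proposition~\ref{RP} (from a short $T$-refutation of a formula one effectively obtains a short proof of the negation of its Rosser-provability, and a short reason for $\neg\PRR_3(\gdl{\varphi})$ reflects back a short refutation of $\varphi$), to conclude that $\neg\varphi\in X_{m-1}$, contradicting the criticality of $\varphi$. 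Carrying out this bookkeeping carefully --- keeping proof-lengths under $m-1$, checking that the self-referential parts of Procedure~2 are legitimate, and formalizing the whole argument in $\PA$ --- is the technical heart of the construction.
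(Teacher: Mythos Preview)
Your plan clings to the template of Theorems~\ref{DCT1} and~\ref{DCT2} too tightly: you keep ${\sf Sat}(m)$ as the trigger and try to bolt $X_{m-1}$ onto the model/criticality apparatus. The paper drops that apparatus entirely for this theorem. There is no ${\sf Sat}$, no model $V$, no notion of ``critical''. Instead the trigger is a self-referential \emph{bell} (defined via the recursion theorem) that rings at the first stage $m$ at which one of three conditions holds, among them
\[
\neg\varphi\notin X_m\ \text{and}\ \neg\PRR_3(\gdl{\varphi})\in X_m,
\qquad\text{or}\qquad
\varphi\ \text{is a true }\Delta_0\ \text{sentence and}\ \neg\varphi\in X_m.
\]
Procedure~2 is then trivial: first dump all of $X_{m-1}$, then enumerate every $\xi_i$ in order. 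The paper's Claim~1 is the clean biconditional ``$\neg\varphi\in X_{m-1}$ iff $\neg\PRR_3(\gdl{\varphi})$'', with no positive ``criticality'' side at all. Given this, $\DG{3}$ is two lines: if $\neg\PRR_3(\gdl{\PRR_3(\gdl{\varphi})})$ then $\neg\PRR_3(\gdl{\varphi})\in X_{m-1}$; were $\neg\varphi\notin X_{m-1}$, the first displayed bell condition would already hold at stage $m-1$, contradicting minimality of $m$; hence $\neg\varphi\in X_{m-1}$, whence $\neg\PRR_3(\gdl{\varphi})$. The $\Delta_0$ clause of the bell handles $\DCG$ by the same mechanism.

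Your argument for $\DG{3}$ has a genuine gap exactly where you label it ``the crux''. You need the implication ``$\neg\PRR_3(\gdl{\varphi})\in X_{m-1}\Rightarrow\neg\varphi\in X_{m-1}$'', and you propose to get it by tracing a short $T$-derivation of $\neg\PRR_3(\gdl{\varphi})$ back to a short refutation of $\varphi$, citing Proposition~\ref{RP}. But Proposition~\ref{RP} runs in the \emph{opposite} direction (from $T\vdash\neg\varphi$ to $\PA\vdash\neg\PRR_T(\gdl{\varphi})$), and no converse is available: a proof of $\neg\PRR_3(\gdl{\varphi})$ of code $\le m-1$ gives no handle on proofs of $\neg\varphi$, and the closure clauses defining $X_{m-1}$ never look inside $\PRR_3$. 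Nothing in your setup ties membership of $\neg\PRR_3(\gdl{\varphi})$ in $X_{m-1}$ to membership of $\neg\varphi$ there. The paper manufactures this link by making its \emph{failure} a trigger condition --- that is the missing idea. (Your inductive argument for $\DCG$ is also shaky: the instance clause of $X_{m-1}$ can introduce $\neg\psi\in F_{m-1}\setminus F_n$ starting from $\neg\psi'$ possibly outside $F_n$, so $V$-values need not propagate through it; again the paper sidesteps this by putting the desired conclusion into the bell.)
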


\begin{proof}
%We prove the existence of a Rosser provability predicate satisfying $\CB$, $\BD{2}$ and $\DG{3}$. 
We define a $\PA$-provably recursive function $g_3$ corresponding to this theorem in stages. 
In the definition, as in Guaspari and Solovay \cite{GS79}, the bell which plays a role of a flag is prepared. 
As in our proofs of Theorems \ref{DCT1} and \ref{DCT2}, the construction of $g_3$ consists of Procedures 1 and 2, and the bell may ring during the execution of Procedure 1. 
When the bell rings, the construction switches to Procedure 2.  
Also in the definition of the function $g_3$, we can use the formula $\PRR_3(x)$ by the recursion theorem. 

\vspace{0.1in}
Procedure 1. \\
Stage $1.m$: 
\begin{itemize}
	\item If there exists some formula $\varphi$ satisfying at least one of the following conditions, then ring the bell and go to Procedure 2: 
	\begin{enumerate}
		\item $X_m \cup P_{T, m}$ contains both $\varphi$ and $\neg \varphi$; 
		\item $\neg \varphi \notin X_m$ and $\neg \PRR_3(\gdl{\varphi}) \in X_m$; 
		\item $\varphi$ is a true $\Delta_0$ sentence and $\neg \varphi \in X_m$. 
	\end{enumerate}

	\item Otherwise, 
\[
	g_3(m) = \begin{cases}  \varphi & \text{if}\ m\ \text{is a proof of}\ \varphi\ \text{in} \ T, \\
			0 & m\ \text{is not a proof of any formula in}\ T.
	\end{cases}
\]
	Go to Stage $1.(m+1)$. 
\end{itemize}

Procedure 2. \\
The bell rings at Stage $1.m$. 
Let $\chi_0, \ldots, \chi_{k-1}$ be a list of all elements of the finite set $X_{m-1}$. 
For $i < k$, let 
\[
	g_3(m + i) = \chi_i. 
\]

After that, let $\{\xi_i\}_{i \in \omega}$ be the effective enumeration of all formulas introduced in Section \ref{sec-dc}. 
For $i \geq 0$, let
\[
	g_3(m + k + i) = \xi_i. 
\]

Our definition of the function $g_3$ has just been completed. 
The following claim shows up an important feature of the construction of the function $g_3$. 

\vspace{0.1in}
{\bf Claim 1.}
The following statement is provable in $\PA$: 
``If the bell rings at Stage $1.m$, then for any formula $\varphi$, 
\begin{eqnarray}\label{eq1}
	\neg \varphi \in X_{m-1}\ \text{if and only if}\ \neg \PRR_3(\gdl{\varphi})\ \text{holds''}. 
\end{eqnarray}
\begin{proof}
We work in $\PA$: 
Suppose that the bell rings at Stage $1.m$. 

$(\Rightarrow)$: Suppose $\neg \varphi \in X_{m-1}$. 
Then $\neg \varphi$ is $\chi_i$ for some $i < k$ where $k = |X_{m-1}|$. 
Also $g_3(m + i) = \neg \varphi$ by the definition of $g_3$. 
If $\varphi$ were in $X_{m-1} \cup P_{T, m-1}$, then the bell would ring before Stage $1.m$, and this contradicts the choice of $m$. 
Thus $\varphi \notin X_{m-1} \cup P_{T, m-1}$. 
Then $\varphi$ is not in the list $g_3(0), \ldots, g_3(m-1), g_3(m), \ldots, g_3(m + k-1)$. 
Therefore $\neg \PRR_3(\gdl{\varphi})$ holds. 

$(\Leftarrow)$: We prove the contrapositive. 
Suppose $\neg \varphi \notin X_{m-1}$. 
In particular, $\neg \varphi \notin X_{m-1, 0}$ and hence $\neg \varphi \notin P_{T, m-1}$. 
Let $i$ and $j$ be such that $\xi_i$ is $\varphi$ and $\xi_j$ is $\neg \varphi$. 
Then $g_3(m + k + i) = \varphi$ and $g_3(m + k + j) = \neg \varphi$. 
Since $\neg \varphi \notin X_{m-1} \cup P_{T, m-1}$, $\neg \varphi$ does not appear in $g_3(0), \ldots, g_3(m + k + j -1)$. 
Since $\xi_i$ is a proper subformula of $\xi_j$, we have $i < j$. 
It follows that $\PRR_3(\gdl{\varphi})$ holds. 
\end{proof}

\vspace{0.1in}
{\bf Claim 2.}
$\PA \vdash$``the bell rings'' $\leftrightarrow \neg \Con_T$. 

\begin{proof}
We reason in $\PA$: 

$(\rightarrow)$: 
Suppose that the bell rings at Stage $1.m$. 
We distinguish the following three cases. 
\begin{itemize}
	\item $X_m \cup P_{T, m}$ contains both $\varphi$ and $\neg \varphi$: By Lemma \ref{XYC}, both $\varphi$ and $\neg \varphi$ are provable in $T$. 
Then $T$ is inconsistent. 

	\item $\neg \varphi \notin X_m$ and $\neg \PRR_3(\gdl{\varphi}) \in X_m$: 
By Lemma \ref{XYC}, $\neg \PRR_3(\gdl{\varphi})$ is provable in $T$. 
On the other hand, since $\neg \varphi \notin X_{m-1}$ by Lemma \ref{XYC}, $\PRR_3(\gdl{\varphi})$ holds by Claim 1. 
Then $\PRR_3(\gdl{\varphi})$ is provable because it is a true $\Sigma_1$ sentence. 
Therefore $T$ is inconsistent. 

	\item $\varphi$ is a true $\Delta_0$ sentence and $\neg \varphi \in X_m$: 
Then $\varphi$ is provable in $T$ by $\DCG$ for ${\sf Prov}_T(x)$. 
Also by Lemma \ref{XYC}, $\neg \varphi$ is provable in $T$. 
Hence $T$ is inconsistent. 
\end{itemize}

$(\leftarrow)$: If $T$ is inconsistent, then for some $m$ and $\varphi$, $P_{T, m}$ contains both $\varphi$ and $\neg \varphi$. 
Then the bell rings at some stage. 
\end{proof}

\vspace{0.1in}
{\bf Claim 3.}
For any natural number $n$, $T$ proves ``If the bell rings at Stage $1.m$, then $m$ is larger than $\overline{n}$''. 

\begin{proof}
Let $n$ be any natural number. 
We discuss in $T$. 
Suppose that the bell rings at Stage $1.m$ for some $m \leq n$. 
Then from our proof of Claim 2, there are formulas $\varphi$, $\neg \varphi \in F_n$ such that both $\varphi$ and $\neg \varphi$ are $T$-provable. 
Thus $P_{T, n}$ is inconsistent. 
This contradicts the reflexiveness of $T$. 
\end{proof}

Our formula ${\rm Prf}_3(x, y)$ is a proof predicate of $T$.

\vspace{0.1in}
{\bf Claim 4.}\leavevmode
\begin{enumerate}
	\item $\PA \vdash \forall x({\sf Prov}_T(x) \leftrightarrow \exists y {\rm Prf}_3(x, y))$. 
	\item For any $n \in \omega$ and formula $\varphi$, $\N \models {\sf Proof}_T(\gdl{\varphi}, \overline{n}) \leftrightarrow {\rm Prf}_3(\gdl{\varphi}, \overline{n})$. 
\end{enumerate}
\begin{proof}
1. In the theory $\PA + \neg$``the bell rings'', $\forall x({\sf Prov}_T(x) \leftrightarrow \exists y {\rm Prf}_3(x, y))$ holds by the definition of $g_3$. 

In $\PA +$``the bell rings'', $g_3$ outputs all formulas, and $T$ proves all formulas by Claim 2. 
Thus $\forall x({\sf Prov}_T(x) \leftrightarrow \exists y {\rm Prf}_3(x, y))$ also holds. 

2. Since $\N \models \neg$``the bell rings'' by Claim 2, we obtain $\N \models {\sf Proof}_T(\gdl{\varphi}, \overline{n}) \leftrightarrow {\rm Prf}_3(\gdl{\varphi}, \overline{n})$ holds for any $n \in \omega$ and formula $\varphi$ by the definition of $g_3$. 
\end{proof}

\vspace{0.1in}
{\bf Claim 5.}
$\PA \vdash \forall x(\PRR_3(x) \to \PRR_3(\gdl{\PRR_3(\dot{x})}))$. 

\begin{proof}
We work in $\PA$: 
First, suppose that the bell never rings. 
Then $T$ is consistent by Claim 2. 
Assume that $\PRR_3(\gdl{\varphi})$ holds. 
Since $\PRR_3(\gdl{\varphi})$ is a $\Sigma_1$ sentence, $\PRR_3(\gdl{\varphi})$ is provable in $T$, and hence $\PRR_3(\gdl{\varphi}) \in P_{T, m}$ for some $m$. 
By the consistency of $T$, we have $\neg \PRR_3(\gdl{\varphi}) \notin P_{T, m}$. 
Therefore $\PRR_3(\gdl{\PRR_3(\gdl{\varphi})})$ holds by the definition of $g_3$. 
We have proved that $\neg$``the bell rings'' implies $\forall x(\PRR_3(x) \to \PRR_3(\gdl{\PRR_3(\dot{x})}))$. 

Secondly, we assume that the bell rings at Stage $1.m$.
Suppose $\neg \PRR_3(\gdl{\PRR_3(\gdl{\varphi})})$ holds. 
By Claim 1, $\neg \PRR_3(\gdl{\varphi}) \in X_{m-1}$. 
If $\neg \varphi \notin X_{m-1}$, then the bell rings before Stage $1.m$. 
This is a contradiction. 
Thus $\neg \varphi \in X_{m-1}$. 
By Claim 1 again, $\neg \PRR_3(\gdl{\varphi})$ holds. 

We have proved that $\forall x(\PRR_3(x) \to \PRR_3(\gdl{\PRR_3(\dot{x})}))$ is also implied by the assumption ``the bell rings''. 
Thus we conclude that $\forall x(\PRR_3(x) \to \PRR_3(\gdl{\PRR_3(\dot{x})}))$ holds. 
\end{proof}

\vspace{0.1in}
{\bf Claim 6.}
$T \vdash \PRR_3(\gdl{\forall \vec{x} \varphi(\vec{x})}) \to \forall \vec{x} \PRR_3(\gdl{\varphi(\vec{\dot{x}})})$ for any formula $\varphi(\vec{x})$. 

\begin{proof}
We reason in $T$: 
As in our proof of Claim 5, it suffices to prove the sentence under the assumption ``the bell rings''. 
We assume that the bell rings at Stage $1.m$. 
Suppose $\neg \PRR_3(\gdl{\varphi(\vec{\overline{a}})})$ for some $\vec{a}$. 
Then by Claim 1, $\neg \varphi(\vec{\overline{a}}) \in X_{m-1}$, and hence $\neg \varphi(\vec{\overline{a}}) \in X_{m-1, n}$ for some $n \leq |F_{m - 1}|$ by Lemma \ref{XYC}. 
%Notice that the formula $\forall \vec{x} \varphi(\vec{x})$ is standard and $m$ is nonstandard. 
Since $\varphi(\vec{\overline{a}})$ is an instance of $\forall \vec{x} \varphi(\vec{x})$ and $\neg \forall \vec{x} \varphi(\vec{x}) \in F_{m-1}$ by Claim 3, we have $\neg \forall \vec{x} \varphi(\vec{x}) \in X_{m-1, n+1} \subseteq X_{m-1}$. 
Therefore $\neg \PRR_3(\gdl{\forall \vec{x} \varphi(\vec{x})})$ holds by Claim 1 again. 
\end{proof}

\vspace{0.1in}
{\bf Claim 7.}
If $T \vdash \varphi \to \psi$, then $T \vdash \PRR_3(\gdl{\varphi}) \to \PRR_3(\gdl{\psi})$ for any formulas $\varphi$ and $\psi$.  

\begin{proof}
Suppose $T \vdash \varphi \to \psi$. 
Then $T \vdash \neg \psi \to \neg \varphi$. 
It suffices to show that the sentence $\PRR_3(\gdl{\varphi}) \to \PRR_3(\gdl{\psi})$ is provable in $T+$``the bell rings''. 

We reason in $T+$``the bell rings'': 
Suppose that the bell rings at Stage $1.m$ and that $\neg \PRR_3(\gdl{\psi})$ holds. 
Then by Claim 1, $\neg \psi \in X_{m-1}$. 
Thus $\neg \psi \in X_{m-1, n}$ for some $n \leq |F_{m-1}|$ by Lemma \ref{XYC}. 
Let $k$ be the least proof of $\neg \psi \to \neg \varphi$ in $T$. 
Then $k \leq m-1$ by Claim 3 (because $k$ is standard), and hence $\neg \psi \to \neg \varphi \in P_{T, m-1}$. 
We obtain $\neg \varphi \in X_{m-1, n+1} \subseteq X_{m-1}$. 
Therefore $\neg \PRR_3(\gdl{\varphi})$ holds by Claim 1. 
\end{proof}

\vspace{0.1in}
{\bf Claim 8.}
$\PA \vdash \forall x({\sf True}_{\Delta_0}(x) \to \PRR_3(x))$. 

\begin{proof}
We proceed in $\PA +$ ``the bell rings'': 
Assume that the bell rings at Stage $1.m$. 
Let $\varphi$ be any true $\Delta_0$ sentence. 
If $\neg \varphi \in X_{m-1}$, then the bell rings before Stage $1.m$. 
Thus $\neg \varphi \notin X_{m-1}$. 
By Claim 1, $\neg \PRR_3(\gdl{\varphi})$ does not hold. 
This means $\PRR_3(\gdl{\varphi})$ holds. 
\end{proof}

Our proof of Theorem \ref{DCT3} is completed. 
\end{proof}

\bibliographystyle{plain}
\bibliography{ref}

\begin{thebibliography}{10}

\bibitem{Ara90}
Toshiyasu Arai.
\newblock Derivability conditions on {R}osser's provability predicates.
\newblock {\em Notre Dame Journal of Formal Logic}, 31(4):487--497, 1990.

\bibitem{BM84}
Claudio Bernardi and Franco Montagna.
\newblock Equivalence relations induced by extensional formulae: classification
  by means of a new fixed point property.
\newblock {\em Fundamenta Mathematicae}, 124(3):221--233, 1984.

\bibitem{Buc93}
Wilfried Buchholz.
\newblock Mathematische {L}ogik {II}.
\newblock
  http://www.mathematik.uni-muenchen.de/{$\sim$}buchholz/articles/LogikII.ps,
  1993.

\bibitem{Fef60}
Solomon Feferman.
\newblock Arithmetization of metamathematics in a general setting.
\newblock {\em Fundamenta Mathematicae}, 49:35--92, 1960.

\bibitem{Goed31}
Kurt G{\"o}del.
\newblock {\"U}ber formal unentscheidbare {S\"a}tze der {P}rincipia
  {M}athematica und verwandter {S}ysteme {I}. (in {G}erman).
\newblock {\em Monatshefte f{\"u}r Mathematik und Physik}, 38(1):173--198,
  1931.
\newblock English translation in Kurt G{\"o}del, {\it Collected Works}, Vol. 1
  (pp. 145--195).

\bibitem{GS79}
David Guaspari and Robert~M. Solovay.
\newblock Rosser sentences.
\newblock {\em Annals of Mathematical Logic}, 16(1):81--99, 1979.

\bibitem{HP93}
Petr H{\'a}jek and Pavel Pudl{\'a}k.
\newblock {\em Metamathematics of First-Order Arithmetic}.
\newblock Perspectives in Mathematical Logic. Springer-Verlag, Berlin, 1993.

\bibitem{HB39}
David Hilbert and Paul Bernays.
\newblock {\em Grundlagen der Mathematik. Vol. II}.
\newblock Springer, Berlin, 1939.

\bibitem{HC}
G.~E. Hughes and M.~J. Cresswell.
\newblock {\em A new introduction to modal logic}.
\newblock Routledge, London, 1996.

\bibitem{Jer73}
Robert~G. Jeroslow.
\newblock Redundancies in the {H}ilbert-{B}ernays derivability conditions for
  {G\"o}del's second incompleteness theorem.
\newblock {\em The Journal of Symbolic Logic}, 38(3):359--367, 1973.

\bibitem{KK17}
Makoto Kikuchi and Taishi Kurahashi.
\newblock Universal {R}osser predicates.
\newblock {\em The Journal of Symbolic Logic}, 82(1):292--302, 2017.

\bibitem{Kre60}
Georg Kreisel.
\newblock Ordinal logics and the characterization of informal concepts of
  proof.
\newblock In J.~A. Todd, editor, {\em Proceedings of International Congress of
  Mathematicians 1958}, pages 289--299, New York, 1960. Cambridge University
  Press.

\bibitem{Kre65}
Georg Kreisel.
\newblock Mathematical logic.
\newblock In Thomas~L. Saaty, editor, {\em Lectures in Modern Mathematics},
  volume~3, pages 95--195. Wiley, 1965.

\bibitem{Kre71}
Georg Kreisel.
\newblock A survey of proof theory {II}.
\newblock In Jens~E. Fenstad, editor, {\em Proceedings of the Second
  Scandinavian Logic Symposium}, volume~63 of {\em Studies in Logic and the
  Foundations of Mathematics}, pages 109--170. North Holland, 1971.

\bibitem{KT74}
Gerog Kreisel and Gaisi Takeuti.
\newblock Formally self-referential propositions for cut free classical
  analysis and related systems.
\newblock {\em Dissertationes Mathematicae (Rozprawy Matematyczne)}, 118, 1974.

\bibitem{Kur2}
Taishi Kurahashi.
\newblock A note on derivability conditions.
\newblock In preparation. arXiv:1902.00895.

\bibitem{Kur}
Taishi Kurahashi.
\newblock Rosser provability and normal modal logics.
\newblock {\em Studia Logica}.
\newblock doi: 10.1007/s11225-019-09865-2.

\bibitem{Kur16}
Taishi Kurahashi.
\newblock Henkin sentences and local reflection principles for {R}osser
  provability.
\newblock {\em Annals of Pure and Applied Logic}, 167(2):73--94, 2016.

\bibitem{Lin03}
Per Lindstr{\"o}m.
\newblock {\em Aspects of Incompleteness}.
\newblock Number 10 in Lecture Notes in Logic. A\/K\/Peters, 2nd edition, 2003.

\bibitem{Lob55}
Martin~Hugo L{\"o}b.
\newblock Solution of a problem of {L}eon {H}enkin.
\newblock {\em The Journal of Symbolic Logic}, 20(2):115--118, 1955.

\bibitem{Mon79}
Franco Montagna.
\newblock On the formulas of {P}eano arithmetic which are provably closed under
  modus ponens.
\newblock {\em Bollettino dell'Unione Matematica Italiana}, 16(B5):196--211,
  1979.

\bibitem{Mos65}
Andrzej Mostowski.
\newblock Thirty years of foundational studies: lectures on the development of
  mathematical logic and the study of the foundations of mathematics in
  1930-1964.
\newblock In {\em Acta Philosophica Fennica}, volume~17, pages 1--180. 1965.

\bibitem{Rau10}
Wolfgang Rautenberg.
\newblock {\em A concise introduction to mathematical logic. {T}hird edition}.
\newblock Universitext. Springer, New York, 2010.

\bibitem{Ros36}
John~Barkley Rosser.
\newblock Extensions of some theorems of {G\"o}del and {C}hurch.
\newblock {\em The Journal of Symbolic Logic}, 1(3):87--91, 1936.

\bibitem{Sha91}
Vladimir~Yurievich Shavrukov.
\newblock On {R}osser's provability predicate.
\newblock {\em Zeitschrift f{\"u}r Mathematische Logik und Grundlagen der
  Mathematik}, 37(4):317--330, 1991.

\end{thebibliography}

\end{document}